\undefined \DeclareGraphicsRule{*}{eps}{*}{} \else
\newtheorem{theorem}{Theorem}[section]
\newtheorem{lemma}[theorem]{Lemma}
\newtheorem{cor}[theorem]{Corollary}
\newtheorem{prop}[theorem]{Proposition}
\newtheorem{conj}[theorem]{Conjecture}
\newtheorem{remark}[theorem]{Remark}
\newtheorem{exam}[theorem]{Example}
\newtheorem{definition}[theorem]{Definition}
\newcommand{\F}{{\mathbb F}}
\begin{document}

\title{{Semigroup rings and algebraically independent sequences with respect to idempotents in commutative semigroups}}
\author{
Guoqing Wang\\
\small{School of
Mathematical Sciences, Tiangong University, Tianjin, 300387, P. R. China}\\
\small{Email: gqwang1979@aliyun.com}
\\
}
\date{}
\maketitle

\begin{abstract} For any finite abelian group $G$ and commutative unitary ring $R$, by $R[G]$ we denote the group algebra over $R$. Let $T=(g_1,\ldots,g_{\ell})$ be a sequence over the group $G$. We say $T$ is algebraically zero-sum free over R if $\prod\limits_{i=1}^r(X^{g_i}-a_i)\neq 0\in R[G]$ for all $a_1,\ldots,a_{\ell}\in R\setminus \{0\}$. Let $d(G,R)={\rm sup}\{|T|: T  \mbox{ is an algebraically zero-sum free sequence over } R \mbox{ of terms from }G\}.$ This invariant of the group algebra $R[G]$ plays a powerful role in the research for the zero-sum theory. In this paper, we generalize this invariant to the semigroup algebra $R[S]$ for a commutative periodic semigroup $S$. We give the best possible lower and upper bounds for $d(S,R)$ for a general commutative periodic semigroup $S$. In case that $K$ is a field, and $S$ is a finite commutative semigroup, we give more precise result, including the equality for Clifford semigroups, Archimedean semigroups and elementary semigroups, which covers all types of irreducible components associated with the semilattice decomposition and the subdirect product decomposition of a commutative semigroup.  Also, the invariant $d(S,K)$ was applied to the study of some zero-sum invariants in semigroups.  One conjecture on the equality for $d(S,K)$ in case $K$ is an algebraically closed field of characteristic zero was proposed which has been also partially affirmed in this paper.
\end{abstract}

\noindent{\small {\bf Key Words}: {\sl  Zero-sum theory; Zero-sum sequences; Olson Theorem; Davenport constant; Erd\H{o}s-Burgess constant; Semigroup algebras; Group algebra;
Sch${\rm \ddot{u}}$tzenberger groups}}

\section {Introduction}

Let $G$ be a finite abelian group written additively. The Davenport constant ${\rm D}(G)$ of $G$ is defined as
the smallest positive integer $\ell$ such that, every sequence of $\ell$ terms from $G$
contains some terms with sum being the identity element. This invariant was first formulated by K. Rogers \cite{Rogers}, and popularized by H. Davenport in the 1960's, notably for its link with algebraic number theory (as
reported in \cite{Olson1}), and has been investigated extensively in the past 60 years. This combinatorial
invariant was found with applications in other areas, including Factorization Theory of Algebra (see
\cite{GH,GRuzsa}), Classical Number Theory, Graph Theory, and Coding Theory.
Although the constant has been proposed for many years, and it is generally considered very important, only very few direct progress on its value has been made. The following two theorems are the landmark achievements in the investigation on the Davenport constant of finite abelian groups.

\noindent\textbf{Theorem A}. \cite{EmdeBoas} Let $G$ be a finite abelian group, and $\exp(G)$ the exponent of $G$. Then ${\rm D}(G)\leq \exp(G)+ \exp(G) {\rm ln}\frac{|G|}{\exp(G)}$, in particular, if $G$ is cyclic, then ${\rm D}(G)= \exp(G)+ \exp(G) {\rm ln}\frac{|G|}{\exp(G)}=|G|$.

\noindent\textbf{Theorem B}. \cite{Olson2} Let $G$ be a finite abelian $p$-group for some prime $p$. Say $G\cong \mathbb{Z}_{p^{a_1}}\oplus\cdots\oplus \mathbb{Z}_{p^{a_r}}$ with $1\leq n_1\leq \cdots\leq n_r$. Then ${\rm D}(G)=d^*(G)+1$ where $d^*(G)$ is given as \eqref{definition of d*(G)}.

As a consequence of the Fundamental Theorem for finite abelian groups, any nontrivial finite abelian group can be written uniquely as the direct sum  $\mathbb{Z}_{n_1}\oplus\cdots\oplus \mathbb{Z}_{n_r}$ of cyclic groups  $\mathbb{Z}_{n_1},\ldots, \mathbb{Z}_{n_r}$ with $1 < n_1 \mid \cdots \mid n_r$. Then we denote
\begin{equation}\label{definition of d*(G)}
{\rm d}^*(G) = \sum\limits_{i=1}^r (n_i-1).
\end{equation}

Theorem A is was applied by W.R. Alford, A. Granville and C. Pomerance \cite{AGP} to prove that there are
infinitely many Carmichael numbers which solve the old problem starting form 1912.
Theorem B for some sense is the unique {\sl general} type of finite abelian groups, for which the precise value is known. For more result on the precise value of Davenport constant in special types, one is referred to \cite{GaoGeroldingersurvey}.

One thing worth mentioning is that Group algebra is the crucial tool to prove both Theorems A and B (see Theorem C below), for which the key is the invariant $d(G,R)$ given in Definition \ref{Definition d(s,R) for groups}. Here, we first review some basic definition for (\sl commutative) group ring. Let $G$ be a finite abelian group, and let $R$ be a commutative unitary ring. The group ring $R[X;G]$, of the group $G$ over the ring $R$, is a free-$R$ module with basis $\{X^g: g\in G\}$, for which the multiplication is defined by
$$(\sum\limits_{g\in G} a_g X^g) (\sum\limits_{g\in G} b_g X^g)=\sum\limits_{g\in G} (\sum\limits_{h\in G}a_h b_{g-h}) X^g.$$

\begin{definition}\label{Definition d(s,R) for groups} \cite{GeGyZhong} \ Let $G$ be a finite abelian group, and $R$ a commutative unitary ring. Given a sequence $T=g_1\cdot\ldots\cdot g_{\ell}\in \mathcal{F}(G)$, we say the sequence $T$ is algebraically zero-sum free (over R) provided that
$(a_1-X^{g_1})\cdot\ldots\cdot (a_{\ell}-X^{g_{\ell}})\neq 0\in R[X;G]$ for all $a_1,\ldots,a_{\ell}\in R\setminus \{0\}$.
Moreover, we define ${\rm d}(G,R)={\rm sup}\{|T|: T \mbox{ is a sequence over } G \mbox{ and algebraically zero-sum free over } R\}.$
\end{definition}

By using the character theory of finite group and combinatorial methods, the following result was obtained on ${\rm d}(G,R)$, which gives Theorems A and B as stated before.

\noindent \textbf{Theorem C}. (see Theorem 2.2.6 of \cite{GRuzsa}, Theorem 3.4.16 of \cite{GeGyZhong}) Let $G$ be a finite abelian group with $n=\exp(G)$, and let $R$ be domain.
Then the following conclusions hold:

(i) ${\rm D}(G)\leq {\rm d}(G,R)+1$;

(ii) Suppose $R$ is a splitting field of $G$ (that is, $R$ is a field with $\sharp\{\zeta\in R: \zeta^n=1_R\}=n$). Then ${\rm d}(G,R)\leq (n-1)+n\log\frac{|G|}{n}$ and equality holds if $G$ is cyclic, where ${\rm log}$ denotes the natural logarithm.

(iii) Suppose $n=p^{t}$ and $R=\mathbb{F}_p$ where $p$ is prime and $t>0$. Then ${\rm d}(G,R)=d^*(G)$.

Group algebra $R[X;G]$ over suitable commutative ring $R$ have turned out to be powerful tools for a growing variety of questions from combinatorics and number theory. It is important and meaningful to develop this method from the combinatorial perspective (see \cite{EmdeBoas,Gaojcta,GaoGeroldingernumberofpgroup,GaoGeroldingerRocket,GaoLiArs,GaoDWangIsr,GeGyZhong,GeroldingerScheiderDiscre,Grynkiewiczmono,Olson1,Smertnig} for example).
On the other hand, the zero-sum type of problems have been also investigated recently
in the setting of semigroups (see \cite{Nkravitz}, \cite{wangDavenportII}, \cite{wangAddtiveirreducible}, \cite{wangidempotent}, \cite{wangCommunication}, \cite{wang-zhang-wang-qu}, \cite{wang-zhang-qu} for example).
In this paper, we generalize the invariant $d(G;R)$ to commutative semigroups (see Definition \ref{Definition d(S,R)}). Among other results, we give the best possible lower and upper bounds of $d(S;R)$ for a finite commutative semigroup $S$ and commutative unitary ring $R$ via to the Sch${\rm \ddot{u}}$tzenberger groups and the maximal length of principal ideals chain of $S$, and we determine $d(S;R)$ in case that $S$ or a clifford semigroup, or an archimedean semigroup, or an elementary semigroup.
Moreover, we try to connect $d(S;R)$ with one combinatorial invariant, Erd\H{o}s-Burgess constant of a finite commutative semigroup $S$, and get some preliminary results.

\section{Notation and terminology}

For integers $a,b\in \mathbb{Z}$, we set $[a,b]=\{x\in \mathbb{Z}: a\leq x\leq b\}$. For a real number $x$, we denote by $\lceil x\rceil$ the smallest integer that is greater than or equal to $x$.

Let $\mathcal{S}$ be a commutative semigroup written additively, where the operation of $\mathcal{S}$ is denoted as $+$.
For any positive integer $m$ and any element $a\in \mathcal{S}$, we denote by $ma$ the sum $\underbrace{a+\cdots+a}\limits_{m}$. An element $e$ of $\mathcal{S}$ is said to be idempotent if $e+ e=e$. A cyclic semigroup is a semigroup generated by a single element $x$, denoted $\langle x\rangle$, consisting of all elements which can be represented as $m x$ for some positive integer $m$.
If the cyclic semigroup $\langle x\rangle$ is infinite then $\langle x\rangle$ is isomorphic to the semigroup of $\mathbb{N}$ with addition (see \cite{Grillet monograph}, Proposition 5.8), and if $\langle x\rangle$ is finite
then the least integer $k>0$ such that $kx=tx$ for some positive integer $t\neq k$ is called the {\sl index} of $x$,  then the least integer $n>0$ such that $(k+n)x=k x$ is called the {\sl period} of $x$.
We denote a finite cyclic semigroup of index $k$ and period $n$ by $C_{k; n}$, denoted as $\mathcal{P}(x)$.
By $\rho(x)$ we denote the least positive integer $m$ such that $m x=e$ is the unique idempotent in the cyclic semigroup $\langle x\rangle$.  Precisely,
\begin{equation}\label{equation rho(x)=knn}
\rho(x)=\left\lceil\frac{k}{n}\right\rceil n
 \end{equation} can be seen from Lemma \ref{Lemma cyclic semigroup} below. By $e(x)$ we denote the unique idempotent in the cyclic semigroup $\langle x\rangle$ generated by $x$, i.e.,
\begin{equation}\label{equation e(x)=rho(x)x}
e(x)=\rho(x) x.
\end{equation}
Then $\rho(x)=1$ if and only if $x$ is an idempotent. One thing worth remarking is that somewhere in this manuscript, we shall write $e(x)=x+(2\rho(x)-1)x$ instead of $x+(\rho(x)-1)x$ in the process of calculations to avoid the confusion in case that $x$ is an idempotent and $\rho(x)-1=0$.
For a finite commutative semigroup $S$, we denote $${\rm exp}(S)={\rm lcm}\{\mbox{ period of } x: x\in S\},$$
We say one element $b$ is {\sl periodic} if $\langle b\rangle$ is finite. If every element of the semigroup $S$ is periodic, then we call the semigroup {\bf periodic}. In particular, a finite semigroup is periodic.

\noindent $\bullet$ {\sl Note that if $k=1$ the semigroup $C_{k; n}$ reduces to a cyclic group of order $n$ which is isomorphic to the additive group $\mathbb{Z}_{n}$ of integers modulo $n$; If the semigroup $S$ is a torsion group, then  $\rho(x)$ reduces to the order $o(x)$ of $x$.}

For any element $a\in \mathcal{S}$, let
$$(a)=\{a\}\cup \{a+c: c\in \mathcal{S}\}$$ denotes the {\sl principal ideal} generated by the element $a\in \mathcal{S}$.
The {\bf Green's preorder} on the semigroup $\mathcal{S}$, denoted $\preceq_{\mathcal{H}}$, is defined by
$$a \preceq_{\mathcal{H}} b\Leftrightarrow a=b \ \ \mbox{or}\ \ a=b+c \mbox{ for some } c\in \mathcal{S},$$  equivalently, $$(a)\subseteq (b).$$ Green's congruence, denoted
$\mathcal{H}$, is a basic relation introduced by Green for semigroups which is defined by:
$$a \ \mathcal{H} \ b \Leftrightarrow a \ \preceq_{\mathcal{H}} \ b \mbox{ and } b \ \preceq_{\mathcal{H}} \ a \Leftrightarrow (a)=(b).$$ By $S\diagup\mathcal{H}$ we denotes the quotient semigroup of $S$ modulo the congruence $\mathcal{H}$. For any element $a$ of $\mathcal{S}$,  let $H_a$ be the congruence class by $\mathcal{H}$ containing $a$, i.e., $H_a=\{x\in S: x \mathcal{H} a\}$.  We write $a\prec_{\mathcal{H}} b$ to mean that $a \preceq_{\mathcal{H}} b$ but $H_a\neq H_b$.
For any element $a\in S$, we define $\psi(a)$ to be the largest length $\ell\in \mathbb{N}$ of strictly ascending principal ideals chain of $\mathcal{S}$ starting from $(a)$, i.e., the largest $\ell\in \mathbb{N}$ such that there exist $\ell$ elements $a_1,\ldots,a_{\ell}\in \mathcal{S}$ with $$(a) \subsetneq  (a_1) \subsetneq \cdots \subsetneq (a_{\ell}),$$ or equivalently,
$$a \prec_{\mathcal{H}} a_1\prec_{\mathcal{H}}\cdots \prec_{\mathcal{H}} a_{\ell}.$$
We define $\Psi(S)$ to be the largest length $\ell\in \mathbb{N}$ of strictly ascending principal ideals chain of $\mathcal{S}$. Then $$\Psi(S)={\rm sup}\{\psi(a): a\in S\}.$$

For any subset $A\subseteq \mathcal{S}$, let $${\rm St}(A)=\{c\in \mathcal{S}: c+a\in A \ \mbox{for every} \ a\in A\}$$ be the stabilizer of the set $A$ in the semigroup $\mathcal{S}$, which is a subsemigroup of $\mathcal{S}$. Then we introduce the definition of {\bf Sch${\rm \ddot{u}}$tzenberger group} which palys a key role in giving the bounds for $d(S,R)$.

Each $c\in {\rm St}(H_a)$ induces a mapping $$\gamma_c:H_a\rightarrow H_a$$ defined by
\begin{equation}\label{equation gammac}
\gamma_c: x\mapsto c+x
 \end{equation}for every $x\in H_a$, which we write as a operator $$\gamma_c \circ x=c+x.$$ Let $$\Gamma(H_a)=\{\gamma_c: c\in {\rm St}(H_a)\}.$$
It is known that $\Gamma(H_a)$ is an abelian group with the operation
\begin{equation}\label{equation operation}
\gamma_c+\gamma_d=\gamma_{c+d},
\end{equation} which is discover by M.P. Sch${\rm \ddot{u}}$tzenberger  in 1957 (see Section 4 of Chapter I in \cite{Grillet monograph}), and is called the Sch${\rm \ddot{u}}$tzenberger group of $H_a$. By \eqref{equation operation}, we see that there exists a semigroup homomorphism $\pi_a$ of ${\rm St}(H_a)$ onto $\Gamma(H_a)$, defined by
\begin{equation}\label{equation homo to shuzen}
\pi_a: c\mapsto \gamma_c
\end{equation}
for every $c\in {\rm St}(H_a)$.
A commutative semigroup is {\bf archimedean} provided that for every $a,b\in S$, there exist some $n>0$ and $t\in S$ such that $na=t+b$. A semigroup $S$ is called semilattice if all its elements are idempotent. Let $\varphi$ be a homomorphism of a semigroup $S$ into a semilattice $Y$ (the operation of $Y$ is denoted as $\wedge$). Then $S_a=\{x\in S: \varphi(x)=a\}$ is a subsemigroup of $S$
for $a\in Y$, and $S_aS_b=\{xy:x\in S_a, y\in S_b\}\subset S_{a\wedge b}$ for all $a,b\in Y$. That is, $S$ is a semilattice of semigroups when there exists a semilattice $Y$ and a partition $S=\bigcup\limits_{a\in Y} S_a$ of $S$ into subsemigroups $S_a$ (one for every $a\in Y$) such that $S_aS_b=\{xy:x\in S_a, y\in S_b\}\subset S_{a\wedge b}$ for all $a,b\in Y$. By the structure theory for semigroups, we know that every commutative semigroup always has a semilattice decomposition and every component is an archimedean semigroup (see Chapter III in \cite{Grillet monograph}). A commutative {\bf Clifford} semigroup is a semigroup with a semilattice decomposition $S=\bigcup G_a$ in which every semigroup $G_a$ is a group. Clifford semigroups are also called a commutative inverse semigroups or commutative regular semigroups. Without mentioning the semilattice decomposition, one can equivalently define Clifford semigroups as follows:  A semigroup $S$ is called regular provided that for any element $a\in S$, there exists $a'\in S$ such that $a'aa'=a$ and $aa'a=a$. A commutative regular semigroup is called a clifford semigroup. Clifford semigroups have been studied extensively and have provided a testing ground for many ideas and problems (see \cite{Grillet monograph}).
An {\bf elementary semigroup} $S=G\cup N$ is an ideal extension of a nilsemigroup $N$ by a group with zero $G\cup \{\infty\}$. In this case, the group $G$ acts on the set $N$ by left multiplication ($g\cdot x=gx$). By Birkhoff Theorem (see \cite{Grillet monograph} Chapter IV, Theorem 4.1),  every commutative semigroup is a subdirect product of subdirectly irreducible commutative semigroups. Also every finite subdirectly irreducible commutative semigroup is either a group or a nilsemigroup or elementary.

Now we shall introduce the definition for semigroup ring.
For any commutative unitary ring $R$ and any commutative semigroup $S$, let $R[X;S]$ be the semigroup ring, consisting of all functions $f$ from $S$ into $R$ that are finitely nonzero (i.e., $f(s)=0_R$ holds for almost all elements  of $S$ except for only finitely many), with addition and multiplication defined in $R[X;S]$ as follows:
$f+g(s)=f(s)+g(s)$ and $(fg)(s)=\sum\limits_{t+u=s} f(t)g(u)$ where the symbol $\sum\limits_{t+u=s}$ indicates that  the sum is taken over all pairs $(t,u)$ of elements of $S$ such that $t+u=s$, and it is understood that $(fg)(s)=0$ if $s$ is not expressible in the form $t+u$ for any $t,u\in S$. For the convenience, the element $f\in R[X;S]$ can be denoted to be $\sum\limits_{s\in S}f(s) X^{s}$ as given for the group algebra, and the operation is as $$(\sum\limits_{s\in S} a_s X^s) (\sum\limits_{s\in S} b_s X^s)=\sum\limits_{s\in S} (\sum\limits_{t+u=s}a_t b_u) X^s.$$
For more notation and definitions on semigroup rings, one is referred to \cite{Gilmer}.
Then we introduce the definition of $d(S,R)$ associated with semigroups rings $R[X;S]$.

\begin{definition}\label{Definition d(S,R)}  For a commutative periodic semigroup $S$ and a commutative unitary ring $R$, let ${\rm d}(S,R)$ denote the supremum of all $\ell\in \mathbb{N}\cup \{\infty\}$ having the following property:

There exists some sequence $T=s_1\cdot\ldots\cdot s_{\ell}$ of length $\ell$ over $S$ such that $$(X^{s_1}-a_1X^{e(s_1)})\cdot\ldots\cdot (X^{s_{\ell}}-a_{\ell} X^{e(s_{\ell})})\neq 0\in R[X;S]$$ for all $a_1,\ldots,a_{\ell}\in R\setminus \{0\}$, where $e(s_i)$ denotes the unique idempotent in the finite cyclic semigroup generated by $s_i$ ($i\in [1,\ell]$).
\end{definition}

\begin{remark} We remark that if the commutative semigroup $S$ is a finite abelian group, then $e(g_i)$ is equal to the unique identity of $G$ and the definition for ${\rm d}(S,R)$ reduces to one for groups given as Definition \ref{Definition d(s,R) for groups}. If the semigroup (group) $S$ is empty, we let $d(S;R)=0$ which will be used in the proof latter and cause no confusion.
\end{remark}

Let $R$ be a commutative ring. For any module $M$ over a commutative ring $A$. A ring-homomorphism $\varphi:R\rightarrow {\rm End}_A(M)$ is a representation of $R$ on $M$.

We also need notation and terminologies on sequences over semigroups and follow the notation of A. Geroldinger, D.J. Grynkiewicz and
others used for sequences over groups (cf. [\cite{Grynkiewiczmono}, Chapter 10] or [\cite{GH}, Chapter 5]). Let ${\cal F}(\mathcal{S})$
be the free commutative monoid, multiplicatively written, with basis
$\mathcal{S}$. We denote multiplication in $\mathcal{F}(\mathcal{S})$ by the boldsymbol $\cdot$ and we use brackets for all exponentiation in $\mathcal{F}(\mathcal{S})$. By $T\in {\cal F}(\mathcal{S})$, we mean $T$ is a sequence of terms from $\mathcal{S}$ which is
unordered, repetition of terms allowed. Say
$T=a_1a_2\cdot\ldots\cdot a_{\ell}$ where $a_i\in \mathcal{S}$ for $i\in [1,\ell]$.
The sequence $T$ can be also denoted as $T=\mathop{\bullet}\limits_{a\in \mathcal{S}}a^{[{\rm v}_a(T)]},$ where ${\rm v}_a(T)$ is a nonnegative integer and
means that the element $a$ occurs ${\rm v}_a(T)$ times in the sequence $T$. By $|T|$ we denote the length of the sequence, i.e., $|T|=\sum\limits_{a\in \mathcal{S}}{\rm v}_a(T)=\ell.$ By $\varepsilon$ we denote the
{\sl empty sequence} in $\mathcal{S}$ with $|\varepsilon|=0$. We call $T'$
a subsequence of $T$ if ${\rm v}_a(T')\leq {\rm v}_a(T)\ \ \mbox{for each element}\ \ a\in \mathcal{S},$ denoted by $T'\mid T,$ moreover, we write $T^{''}=T\cdot  T'^{[-1]}$ to mean the unique subsequence of $T$ with $T'\cdot T^{''}=T$.  We call $T'$ a {\sl proper} subsequence of $T$ provided that $T'\mid T$ and $T'\neq T$. In particular, the  empty sequence  $\varepsilon$ is a proper subsequence of every nonempty sequence.
Let $\sigma(T)=a_1+\cdots +a_{\ell}$ be the sum of all terms from $T$. The sequence $T$ is called an additively reducible (reducible) sequence if $T$ contains a proper subsequence $T'$ with $\sigma(T')=\sigma(T)$, and is called an  additively irreducible (irreducible) sequence if otherwise.
We call $T$ a {\sl zero-sum} sequence provided that $\mathcal{S}$ is a monoid and $\sigma(T)=0_{\mathcal{S}}$.
In particular,
if $\mathcal{S}$ is a monoid,  we allow $T=\varepsilon$ to be empty and adopt the convention
that $\sigma(\varepsilon)=0_\mathcal{S}.$

\noindent \textbf{Definition D.}  (see \cite{wangAddtiveirreducible}) \ {\sl  The Davenport constant of the commutative semigroup $\mathcal{S}$, denoted ${\rm D}(\mathcal{S})$, is defined to be the smallest $\ell\in \mathbb{N}\cup\{\infty\}$ such that every sequence $T\in \mathcal{F}(\mathcal{S})$ of length at least $\ell$ is reducible. For any element $a\in \mathcal{S}$, the relative Davenport constant of $\mathcal{S}$ with respect to $a$, denoted ${\rm D}_a(\mathcal{S})$, is defined to be the largest $\ell\in \mathbb{N}\cup \{\infty\}$ such that there exists an irreducible sequence $T\in \mathcal{F}(\mathcal{S})$ with $|T|=\ell$ and $\sigma(T)=a$. In particular, if $a=0_S$ is the identity of $S$, then ${\rm D}_a(\mathcal{S})=0$.}

We say  the sequence $T$ is an {\sl idempotent-sum sequence} if $\sigma(T)$ is an idempotent; and we say $T$
is an {\sl idempotent-sum free sequence} if $T$ contains no nonempty idempotent-sum subsequence.
It is worth remarking that when the commutative semigroup $\mathcal{S}$ is an abelian group, the notion {\sl zero-sum sequence} and {\sl idempotent-sum sequence} make no difference.

Let $K$ be a field.
For a finite commutative semigroup $S$ such that $\exp(S)=n$, we say $K$ is a splitting field of the semigroup $S$ provided that $K$ is a splitting field of the group $\mathbb{Z}_n$, i.e., $\sharp\{\zeta\in K: \zeta^n=1_K\}=n$.
In particular, an algebraically closed field of characteristic zero is always a splitting field of any finite commutative semigroup.

\section{d(S,R) with $R$ being a general commutative unitary ring}

In this section, we shall give the
sharp upper and lower bounds for $d(S,R)$ with $R$ being a general commutative unitary ring, and show that the upper bound is attained in case that $S$ is clifford or archimedean. The results are given as Theorem \ref{Theorem General bound for finite commutative semigroup}, Theorem \ref{Theorem for finite}, Corollary \ref{Corollary Clifford} and Corollary \ref{Corollary Archimedean}.

\begin{theorem}\label{Theorem General bound for finite commutative semigroup} Let $R$ be a commutative unitary ring, and
let $S$ be a commutative periodic semigroup such that ${\rm D}(S\diagup \mathcal{H})<\infty$. Then $d(S,R)<\infty$ if and only if $d(\Gamma(H_a),R)<\infty$ for each $a\in S$. Moreover, in case of  $d(S,R)<\infty$, we have
the following best possible upper and lower bounds $$\max\limits_{a\in S}\left\{d(\Gamma(H_a),R)\right\} \leq d(S,R)\leq\max\limits_{a\in S}\left\{\epsilon_a \ {\rm D_{\bar{a}}}(S\diagup \mathcal{H}) +d(\Gamma(H_a),R)\right\},$$ where $$\begin{array}{llll}\epsilon_a=\left\{\begin{array}{llll}
               0,  & \mbox{if \ \ } {\langle a\rangle} \mbox{ is a group };\\
               1,  &  otherwise,\
              \end{array}
              \right.
\end{array}$$
and  $\bar{a}$ denotes the image of $a$ under the canonical epimorphism of $S$ onto $S\diagup \mathcal{H}$.
\end{theorem}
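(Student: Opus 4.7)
My plan is to establish the two inequalities separately; the claimed finiteness equivalence then follows, the forward direction from the lower bound and the reverse from the upper bound (combined with $\mathrm{D}(S/\mathcal{H})<\infty$).

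For the lower bound $d(S,R)\geq d(\Gamma(H_a),R)$, I fix $a\in S$, set $\Gamma=\Gamma(H_a)$, and take an algebraically zero-sum free sequence $(\gamma_1,\ldots,\gamma_\ell)$ in $\Gamma$ with $\gamma_i=\pi_a(c_i)$ for chosen $c_i\in \mathrm{St}(H_a)$. I will claim that $T=c_1\cdot\ldots\cdot c_\ell$ is algebraically zero-sum free in $S$. The key observation is that $e(c_i)\in \langle c_i\rangle\subseteq \mathrm{St}(H_a)$ is an idempotent, so $\pi_a(e(c_i))$ is an idempotent of the group $\Gamma$ and hence equals the identity $0_\Gamma$; equivalently $e(c_i)+y=y$ for every $y\in H_a$. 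Consequently $X^{e(c_i)}$ acts as the identity on the $R$-submodule $R[H_a]\subseteq R[X;S]$ spanned by $\{X^y:y\in H_a\}$, while $X^{c_i}$ acts by translation $X^y\mapsto X^{c_i+y}$, i.e., by the action of $\gamma_i$. Since $\Gamma$ acts freely and transitively on $H_a$, fixing a basepoint $y_0\in H_a$ yields an $R$-module isomorphism $R[H_a]\cong R[X;\Gamma]$ intertwining the operator $\prod_i(X^{c_i}-a_iX^{e(c_i)})$ on $R[H_a]$ with multiplication by $\prod_i(X^{\gamma_i}-a_i)$ on $R[X;\Gamma]$. The latter is nonzero by hypothesis, so the original product is nonzero in $R[X;S]$.

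For the upper bound, I take an algebraically zero-sum free sequence $T=s_1\cdot\ldots\cdot s_\ell$ in $S$ and set $a=\sigma(T)$. I project to $\bar T$ in $S/\mathcal{H}$ and select $T_1\mid T$ such that $\bar T_1$ is a minimal subsequence of $\bar T$ with $\sigma(\bar T_1)=\bar a$; minimality forces $\bar T_1$ to be additively irreducible, so $|T_1|\leq \mathrm{D}_{\bar a}(S/\mathcal{H})$. Writing $T_2=T\cdot T_1^{[-1]}$, the sum $\sigma(\bar T_2)$ stabilizes $\bar a$ in $S/\mathcal{H}$, and I will argue that each term of $T_2$ (or a suitable multiple of it in its cyclic subsemigroup) lies in $\mathrm{St}(H_a)$, so that $\pi_a(T_2)$ is a sequence in $\Gamma(H_a)$. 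Algebraic zero-sum freeness of $T$ then restricts to that of $\pi_a(T_2)$ via the module $R[H_a]$, giving $|T_2|\leq d(\Gamma(H_a),R)$ and hence $\ell\leq \mathrm{D}_{\bar a}(S/\mathcal{H})+d(\Gamma(H_a),R)$. When $\langle a\rangle$ is a group, $\bar a$ becomes idempotent in $S/\mathcal{H}$, so $T_1$ can be taken empty and the $\mathrm{D}_{\bar a}$-term vanishes, matching the $\epsilon_a=0$ case.

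The hard part will be the upper bound, specifically proving that after the decomposition $T=T_1\cdot T_2$ the sequence $T_2$ (or its surrogate) embeds into $\mathrm{St}(H_a)$ and that its image $\pi_a(T_2)$ is algebraically zero-sum free in $\Gamma(H_a)$. The subtle point is that the $e(s_i)$ appearing in the product need not coincide with the identity of $\Gamma(H_a)$ when $s_i\notin \mathrm{St}(H_a)$, so a direct restriction of the operator to $R[H_a]$ does not produce the correct operator over $\Gamma(H_a)$. Bridging this gap will require careful use of the hypothesis $\mathrm{D}(S/\mathcal{H})<\infty$ together with the structural link between each $\mathcal{H}$-class and its Schützenberger group.
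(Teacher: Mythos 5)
Your lower bound argument is sound and essentially the paper's: lift an algebraically zero-sum free sequence from $\Gamma(H_a)$ to ${\rm St}(H_a)$, note $\gamma_{e(c_i)}$ is the identity of $\Gamma(H_a)$, and detect nonvanishing either (as you do) through the simply transitive action on $H_a$, or (as the paper does) through the induced ring epimorphism $\widetilde{\pi}_a\colon R[X;{\rm St}(H_a)]\to R[X;\Gamma(H_a)]$; this also gives the "only if" half of the finiteness claim. Your upper-bound skeleton likewise matches the paper: with $a=\sigma(T)$, extract a minimal subsequence $T_1$ whose projection is irreducible with sum $\bar a$ (so $|T_1|\le {\rm D}_{\bar a}(S\diagup\mathcal{H})$), and since $\mathcal{H}$ is a congruence each term of $T_2=T\cdot T_1^{[-1]}$ does lie in ${\rm St}(H_a)$ (no "suitable multiple" is needed).

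However, the step you yourself flag is a genuine gap, and your proposed fix ("restrict to the module $R[H_a]$") does not close it. The problem is not merely that $e(s_i)$ fails to map to $0_{\Gamma}$ for $s_i\notin{\rm St}(H_a)$ (those factors are never mapped to $\Gamma$ at all); it is that, after you force $\prod_{c\in T_2}(X^{\gamma_c}-b_cX^{0_\Gamma})=0$ in $R[X;\Gamma(H_a)]$, you must conclude that the full product $P_1P_2$ vanishes in $R[X;S]$, where $P_1=\prod_{d\in T_1}(X^{d}-X^{e(d)})$. Since $\widetilde{\pi}_a$ is not injective, $P_2\neq 0$ in general, and knowing that multiplication by $P_2$ annihilates the span of $\{X^y:y\in H_a\}$ is not enough: when $P_1$ is expanded, its monomials $X^{v}$ with $v=\sum_{d\in I}d+\sum_{d\notin I}e(d)$ lie only in the principal-ideal set $V=\{x\in S: x\preceq_{\mathcal{H}}a\}$, not in $H_a$. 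The paper bridges exactly this by working with the larger module $R[X;V]$: it shows the $\Gamma(H_a)$-action on $H_a$ extends to $V$ (Lemma \ref{Lemma group act on V}), that $P_1\in R[X;V]$ (Lemma \ref{Lemma elments of V}), and that the multiplication representation of $R[X;{\rm St}(H_a)]$ on $R[X;V]$ factors through $R[X;\Gamma(H_a)]$ via $\widetilde{\pi}_a$, whence $P_1P_2=\psi(\widetilde{\pi}_a(P_2))(P_1)=0$. Moreover the case $\epsilon_a=0$ is not handled by "taking $T_1$ empty" plus the same mechanism: there the paper argues separately, setting $e=\sum_i e(s_i)$, $s_i'=e+s_i\in H_e$ (Lemma \ref{Lemma all in He}), and proving the identity $\prod_i(X^{s_i}-c_iX^{e(s_i)})=\prod_i(X^{s_i'}-c_iX^{e})$, which places the product inside the group algebra $R[X;H_e]$ where $d(H_e,R)=d(\Gamma(H_a),R)$ applies. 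Your appeal to ${\rm D}(S\diagup\mathcal{H})<\infty$ cannot substitute for these structural lemmas; that hypothesis only guarantees $|T_1|$ is bounded.
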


We remark that the condition ${\rm D}(S\diagup \mathcal{H})<\infty$ is not a necessary condition such that $d(S,R)<\infty$ for a commutative periodic semigroup $S$, which can be seen in Example \ref{Example D not finite}.
Without revising the proof substantially, we can obtain the same conclusion as Theorem \ref{Theorem General bound for finite commutative semigroup} for a commutative periodic semigroup $S$ such that ${\rm sup}\{{\rm D_{\bar{a}}}(S\diagup \mathcal{H}: a\in S \mbox{ with } \epsilon_a=1\}<\infty$ (see Theorem \ref{Theorem rivising}).

If $S$ is a finite commutative semigroup, then $S\diagup \mathcal{H}$ is finite and ${\rm D}(S\diagup \mathcal{H})<\infty$ holds true (see Proposition D in \cite{wangAddtiveirreducible}). That is, in case that $S$ is finite,  then $d(S,R)<\infty$ if and only if $d(\Gamma(H_a),R)<\infty$ for each $a\in S$. Moreover, if we take $R$ to be an algebraically closed field of characteristic zero, then $d(\Gamma(H_a),R)<\infty$ always holds. Hence, we have the following theorem.

\begin{theorem}\label{Theorem for finite} Let $S$ be a finite commutative semigroup, and let $K$ be a field.
Suppose that either (i) $K$ is a splitting field of $S$, or (ii) ${\rm Char}(K)=p$ and $\exp(S)$ is a $p$-power for some prime $p$.
Then $d(S,K)<\infty$, and moreover, $$\max\limits_{a\in S}\left\{d(\Gamma(H_a),K)\right\} \leq d(S,K)\leq\max\limits_{a\in S}\left\{\epsilon_a \ {\rm D_{\bar{a}}}(S\diagup \mathcal{H}) +d(\Gamma(H_a),K)\right\}\leq \max\limits_{a\in S}\left\{\epsilon_a \ (\psi(a)+1)+d(\Gamma(H_a),K)\right\},$$ where $\epsilon_a$ and $\bar{a}$ are given in Theorem \ref{Theorem General bound for finite commutative semigroup}. In particular, the bounds are attained in the case that $S$ is a clifford semigroup or an archimedean semigroup.
\end{theorem}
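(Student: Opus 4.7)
The plan is to deduce Theorem~\ref{Theorem for finite} directly from Theorem~\ref{Theorem General bound for finite commutative semigroup}, after checking its two finiteness hypotheses in the present setting, and then to sharpen the upper bound by replacing $\mathrm{D}_{\bar{a}}(S/\mathcal{H})$ with $\psi(a)+1$ via a chain argument. Since $S$ is finite, so is $S/\mathcal{H}$, so Proposition~D of~\cite{wangAddtiveirreducible} (cited in the paragraph above the theorem) gives $\mathrm{D}(S/\mathcal{H})<\infty$. It remains to verify $d(\Gamma(H_a),K)<\infty$ for every $a\in S$. I would first observe that $\Gamma(H_a)$ is a finite abelian group whose exponent divides $n=\exp(S)$: for $c\in\mathrm{St}(H_a)$, the element $nc$ is the idempotent of $\langle c\rangle$ and acts as the identity on $H_a$, so $\gamma_c^n=\gamma_{nc}$ is the identity of $\Gamma(H_a)$. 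Under hypothesis (i), this makes $K$ a splitting field of $\Gamma(H_a)$ and Theorem~C(ii) applies. Under hypothesis (ii), $\Gamma(H_a)$ is a finite abelian $p$-group; the inclusion $\mathbb{F}_p\hookrightarrow K$ together with the easy monotonicity $d(\Gamma(H_a),K)\leq d(\Gamma(H_a),\mathbb{F}_p)$ (any sequence algebraically zero-sum free over $K$ remains algebraically zero-sum free over $\mathbb{F}_p$) reduces the question to Theorem~C(iii). An application of Theorem~\ref{Theorem General bound for finite commutative semigroup} then yields $d(S,K)<\infty$ together with the first two inequalities.

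For the third inequality, it suffices to show $\mathrm{D}_{\bar{a}}(S/\mathcal{H})\leq\psi(a)+1$ for every $a\in S$ with $\epsilon_a=1$, since otherwise both sides of the inequality vanish. Let $T=\bar{s}_1\cdot\ldots\cdot\bar{s}_\ell\in\mathcal{F}(S/\mathcal{H})$ be an irreducible sequence with $\sigma(T)=\bar{a}$, and set $c_k=\bar{s}_1+\cdots+\bar{s}_k$ for $k\in[1,\ell]$. If $c_{k-1}=c_k$ for some $k$, then dropping $\bar{s}_k$ leaves a proper subsequence still summing to $\bar{a}$, contradicting irreducibility; hence the $c_k$ are pairwise distinct. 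Because $S/\mathcal{H}$ is $\mathcal{H}$-trivial, distinct elements generate distinct principal ideals, and the containment $(c_k)\subseteq(c_{k-1})$ forced by $c_k=c_{k-1}+\bar{s}_k$ is therefore strict, yielding $(c_1)\supsetneq\cdots\supsetneq(c_\ell)=(\bar{a})$. Lifting each $c_k$ to a representative $b_k\in S$, the correspondence between $\mathcal{H}$-classes and principal ideals of $S$ transports this into a strictly ascending chain $(a)=(b_\ell)\subsetneq(b_{\ell-1})\subsetneq\cdots\subsetneq(b_1)$ of length $\ell-1$ starting from $(a)$, whence $\ell\leq\psi(a)+1$.

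For the attainment assertion, I would defer to Corollary~\ref{Corollary Clifford} and Corollary~\ref{Corollary Archimedean}, which are proved separately. In the Clifford case every $\langle a\rangle$ is already a subgroup, so $\epsilon_a\equiv 0$ and all three quantities in the inequality chain collapse to $\max_{a\in S} d(\Gamma(H_a),K)$, making attainment almost automatic. The Archimedean case constitutes the main technical obstacle: one must construct an algebraically zero-sum free sequence saturating the upper bound, by interleaving a maximal chain of principal ideals of $S$ with a near-maximal algebraically zero-sum free sequence over the Sch${\rm \ddot{u}}$tzenberger group of the unique idempotent, and verify the nonvanishing of the corresponding product in $K[X;S]$.
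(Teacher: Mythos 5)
Your proposal is correct and follows essentially the same route as the paper: verify the two finiteness hypotheses of Theorem~\ref{Theorem General bound for finite commutative semigroup} (via Proposition~D, Lemma~\ref{Lemma splitting field still} in spirit, and Theorem~C), apply that theorem for the first two inequalities, bound ${\rm D}_{\bar{a}}(S\diagup\mathcal{H})$ by $\psi(a)+1$ through the strictly descending chain of partial sums exactly as in Lemma~\ref{Lemma relative and quotient}, and defer attainment to Corollaries~\ref{Corollary Clifford} and~\ref{Corollary Archimedean}. Your explicit reduction $d(\Gamma(H_a),K)\leq d(\Gamma(H_a),\mathbb{F}_p)$ in case (ii) is a small but welcome refinement, since Theorem~C(iii) as stated requires $R=\mathbb{F}_p$ rather than an arbitrary field of characteristic $p$.
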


In the following, we shall show that the bounds in Theorem \ref{Theorem for finite} are attained in finite commutative clifford semigroups, finite archimedean semigroups and finite elementary semigroups, which also show the sharpness of the bounds in Theorem \ref{Theorem General bound for finite commutative semigroup}.

\begin{cor}\label{Corollary Clifford} Let $S$ be a finite commutative clifford semigroup, and let $K$ be a field. Suppose that either (i) $K$ is a splitting field of $S$, or (ii) ${\rm Char}(K)=p$ and $\exp(S)$ is a $p$-power for some prime $p$. For each $a\in S$,  we have that $\epsilon_a=0$, and the Green congruence class $H_a$ is a maximal subgroup containing $a$ and $\Gamma(H_a)\cong H_a$. In particular, $$d(S,K)=\max\limits_{a\in S}\left\{d(\Gamma(H_a),K)\right\}=\max\limits_H \{d(H,K)\}$$ where $H$ takes all subgroup of $S$.
\end{cor}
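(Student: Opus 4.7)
The plan is to reduce everything to Theorem~\ref{Theorem for finite} by pinning down the structural invariants $\epsilon_a$, $H_a$, and $\Gamma(H_a)$ in a Clifford semigroup, after which both the lower and upper bounds there collapse to the same quantity. First I would exploit the defining decomposition $S=\bigsqcup_{y\in Y}G_y$ as a semilattice of groups. For any $a\in S$, the cyclic subsemigroup $\langle a\rangle$ is contained in a single component $G_{\varphi(a)}$, so $\langle a\rangle$ is a subgroup and hence $\epsilon_a=0$ by definition.

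Next I would identify the Green class $H_a$ with the maximal subgroup $G_{\varphi(a)}$. Writing $y=\varphi(a)$ and using that in the semilattice decomposition $b+c\in G_{\varphi(b)\wedge\varphi(c)}$, one gets $(a)=\bigcup_{z\le y}G_z$; this set depends only on $y$, so $(a)=(a')$ whenever $\varphi(a)=\varphi(a')$, while two elements in distinct components have principal ideals corresponding to distinct down-sets of $Y$. Hence $H_a=G_y$. For the Schützenberger group, I would compute $\mathrm{St}(H_a)=\bigcup_{z\ge y}G_z$ and observe that for $c\in G_z$ with $z\ge y$ the map $\gamma_c:G_y\to G_y$, $x\mapsto c+x$, is exactly left translation by the image of $c$ under the structural connecting homomorphism $G_z\to G_y$. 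Restricting $\pi_a$ (from~\eqref{equation homo to shuzen}) to $G_y$ itself gives a surjection onto $\Gamma(G_y)$, and it is injective because $\gamma_c(e_y)=c$ for $c\in G_y$. Therefore $\Gamma(H_a)\cong H_a=G_y$.

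Having established these three facts, I would simply invoke Theorem~\ref{Theorem for finite}. Its lower bound yields $d(S,K)\ge \max_{a\in S}d(\Gamma(H_a),K)$, while its upper bound, together with $\epsilon_a=0$ for every $a$, collapses to $d(S,K)\le \max_{a\in S}d(\Gamma(H_a),K)$, giving the equality $d(S,K)=\max_{a\in S}d(\Gamma(H_a),K)=\max_{a\in S}d(H_a,K)$. Finally, to replace $\max_{a\in S}$ by $\max_H$ over all subgroups of $S$, I would note that every $H_a$ is itself a subgroup (so $\le$ is immediate) and, conversely, any subgroup $H\subseteq S$ shares its identity with some unique $e_y$, forcing $H\subseteq G_y=H_{e_y}$; since $K[X;H]$ embeds in $K[X;G_y]$, $d(H,K)\le d(G_y,K)$, proving the other inequality.

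The only step that requires care is the identification of $\Gamma(H_a)$ with $H_a$: one must be precise about how the structural (connecting) homomorphisms act and verify that $\pi_a$ already restricts to a bijection on $G_y$, rather than producing redundant translations. Everything else is a direct application of the Clifford structure theorem and the previously proved bounds, so I do not anticipate genuine combinatorial obstacles beyond this bookkeeping.
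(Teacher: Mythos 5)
Your proposal is correct and follows essentially the same route as the paper: establish $\epsilon_a=0$, identify $H_a$ as the maximal subgroup containing $a$ with $\Gamma(H_a)\cong H_a$, and then collapse the two bounds of Theorem~\ref{Theorem for finite}. The only difference is that you re-derive the structural facts directly from the semilattice-of-groups decomposition, whereas the paper simply cites Lemmas~\ref{Lemma clifford semigroup}, \ref{Lemma Greens class}, \ref{Lemma Schuzebure} and \ref{Lemma maximal subgroup}; your computations are a correct (if longer) substitute for those citations.
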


\begin{cor}\label{Corollary Archimedean} Let $S$ be a finite commutative archimedean semigroup,  and let $K$ be a field. Suppose that either (i) $K$ is a splitting field of $S$, or (ii) ${\rm Char}(K)=p$ and $\exp(S)$ is a $p$-power for some prime $p$. Let $G_S$ be the largest subgroup of $S$. Then the following conclusions hold:

(i) $d(S,K)=\max\limits_{a\in S}\left\{\epsilon_a \ (\psi(a)+1)+d(\Gamma(H_a),K)\right\}=\max\left(\Psi(S),d(G_S,K)\right)$;

(ii) $d(S,K)\geq \max\left(\Psi(S),{\rm D}(G_S)-1\right)$, and the equality holds if $G_S$ is cyclic or (ii) holds.
\end{cor}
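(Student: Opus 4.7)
My plan is to start from Theorem \ref{Theorem for finite}, which asserts $d(S,K)=\max_{a\in S}\{\epsilon_a(\psi(a)+1)+d(\Gamma(H_a),K)\}$ in the archimedean case, and then collapse this maximum into $\max(\Psi(S),d(G_S,K))$ by a case analysis on whether $a$ lies in the kernel group $G_S$ or not.

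The structural lemma I would prove first is: for every $a\in S\setminus G_S$, the stabilizer $\mathrm{St}(H_a)$ is empty, so that $d(\Gamma(H_a),K)=0$. Suppose for contradiction that $c\in\mathrm{St}(H_a)$. Since $\mathrm{St}(H_a)$ is closed under addition, $kc\in\mathrm{St}(H_a)$ for every $k\ge 1$, and in particular $e(c)=\rho(c)c\in\mathrm{St}(H_a)$. In a finite commutative archimedean semigroup there is a unique idempotent $e$, so $e(c)=e\in G_S$, and hence $e+a\in H_a$. But $G_S$ is the kernel of $S$, which is an ideal, so $e+a\in G_S$; combined with $a\notin G_S$ (so that $H_a\ne H_e=G_S$ and $H_a\cap G_S=\emptyset$), this is a contradiction.

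With the lemma in hand, I evaluate the maximum. For $a\in G_S$, the subsemigroup $\langle a\rangle$ is a group, so $\epsilon_a=0$, $H_a=G_S$, $\Gamma(H_a)\cong G_S$, and the contribution is $d(G_S,K)$. For $a\in S\setminus G_S$, the lemma reduces the contribution to $\psi(a)+1$; since $(a)\supsetneq G_S$ strictly, $\psi(a)\le\Psi(S)-1$, with equality realized by any $a$ whose principal ideal is minimal strictly above $G_S$ (such an $a$ exists because any maximal principal-ideal chain of length $\Psi(S)$ starts at $G_S=(e)$ and its first step supplies such an element). Therefore the maximum equals $\max(\Psi(S),d(G_S,K))$, giving part (i). For part (ii), Theorem C(i) gives $d(G_S,K)\ge\mathrm{D}(G_S)-1$, so part (i) yields $d(S,K)\ge\max(\Psi(S),\mathrm{D}(G_S)-1)$; equality holds when $G_S$ is cyclic (by Theorem A and Theorem C(ii), $d(G_S,K)=|G_S|-1=\mathrm{D}(G_S)-1$) or under condition (ii) of the corollary (by Theorem B and Theorem C(iii), $d(G_S,K)=d^*(G_S)=\mathrm{D}(G_S)-1$).

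The main obstacle is the stabilizer-emptiness lemma: although short, it must combine three structural facts about finite commutative archimedean semigroups---the existence of a unique idempotent, its identification with $e(c)$ for every $c\in S$, and the ideal property of $G_S$---in exactly the right order. Once the lemma is proved, the rest is a bookkeeping verification that the maximum collapses into the claimed two-term expression, plus a direct substitution of Theorem C into the quantities appearing in part (ii).
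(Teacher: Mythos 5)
Your reduction of the maximum (the stabilizer-emptiness lemma for $a\in S\setminus G_S$, the identification $\Gamma(H_a)\cong G_S$ and $\epsilon_a=0$ for $a\in G_S$, the bookkeeping $\max_{a\notin H_e}\{\psi(a)+1\}=\Psi(S)$, and the deduction of part (ii) from Theorem C) is sound and matches what the paper does; your argument via the kernel being an ideal is essentially the paper's Claim B, which uses $H_e=\{e+x:x\in S\}$ instead. The problem is your starting point. You take the equality $d(S,K)=\max_{a\in S}\{\epsilon_a(\psi(a)+1)+d(\Gamma(H_a),K)\}$ as already supplied by Theorem \ref{Theorem for finite}, but that theorem only proves the chain of inequalities; its parenthetical claim that ``the bounds are attained'' for archimedean semigroups is a forward reference whose proof is exactly this corollary (the paper states explicitly that the corollaries are where the attainment is shown). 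So your proposal is circular on the one point that carries the new content: after your collapse of the maximum, Theorem \ref{Theorem for finite} only yields $d(G_S,K)\leq d(S,K)\leq\max(\Psi(S),d(G_S,K))$, and nothing you prove closes the gap when $\Psi(S)>d(G_S,K)$.

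The missing ingredient is the lower bound $d(S,K)\geq\Psi(S)$, which the paper establishes by an explicit construction: take $t=\Psi(S)$ and elements $g_1,\ldots,g_t$ with $g_1\succ_{\mathcal{H}}(g_1+g_2)\succ_{\mathcal{H}}\cdots\succ_{\mathcal{H}}\sum_{i=1}^{t}g_i\succ_{\mathcal{H}}e$, note $e(g_i)=e$ for all $i$, and expand $f=\prod_{i=1}^{t}(X^{g_i}-a_iX^{e})$ for arbitrary nonzero $a_i$. Every monomial except $X^{g_1+\cdots+g_t}$ has exponent of the form $e+x$, hence lies in $H_e=\{e+x:x\in S\}$, while $g_1+\cdots+g_t\notin H_e$; therefore the coefficient of $X^{g_1+\cdots+g_t}$ is $1$ and $f\neq 0$, giving $d(S,K)\geq\Psi(S)$. (This step is only needed in the case $\Psi(S)>d(G_S,K)$; when $d(G_S,K)\geq\Psi(S)$ one uses Lemma \ref{Lemma subsemigroup} applied to the subgroup $H_e$, which your argument also implicitly needs but does not invoke.) Without some such argument your proof of part (i), and hence of part (ii), is incomplete.
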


\begin{cor}\label{Corollary elementarysemigroup} Let $S$ be a finite commutative elementary semigroup,  and let $K$ be a field. Suppose that either (i) $K$ is a splitting field of $S$, or (ii) ${\rm Char}(K)=p$ and $\exp(S)$ is a $p$-power for some prime $p$.  Then
$$d(S,K)=\max\limits_{a\in S}\left\{\epsilon_a \ {\rm D_{\bar{a}}}(S\diagup \mathcal{H}) +d(\Gamma(H_a),K)\right\}=\max\limits_{a\in S}\left\{\epsilon_a \ (\psi(a)+1)+d(\Gamma(H_a),K)\right\}.$$
\end{cor}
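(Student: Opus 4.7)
The upper bound
\[
d(S,K)\le\max_{a\in S}\bigl\{\epsilon_a(\psi(a)+1)+d(\Gamma(H_a),K)\bigr\}=:M
\]
is already contained in Theorem \ref{Theorem for finite}. My plan is therefore to establish the matching lower bound $d(S,K)\ge M$ and the intermediate identity $\max_a\{\epsilon_a D_{\bar a}(S/\mathcal H)+d(\Gamma(H_a),K)\}=M$, by exploiting the explicit decomposition $S=G\cup N$ of an elementary semigroup into a subgroup $G$ and a nilpotent ideal $N$ on which $G$ acts.

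First I would unpack the $\mathcal H$-structure of $S$. Since $G$ is a (sub)group of $S$ and $N$ is an ideal, any two elements of $G$ generate the same principal ideal, so $H_g=G$, $\Gamma(H_g)\cong G$ and $\epsilon_g=0$ for every $g\in G$. For $a\in N$, the cyclic subsemigroup $\langle a\rangle$ lies in $N$ and is a nilsemigroup, hence $\epsilon_a=1$; moreover each $\mathcal H$-class in $N$ is a single $G$-orbit and $\Gamma(H_a)$ is a quotient of $G$ via the translation action. This gives a clean picture of $S/\mathcal H$: one ``top'' class $\bar G$ together with a nilpotent quotient induced from $N$.

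Next I would prove $D_{\bar a}(S/\mathcal H)=\psi(a)+1$ for every $a\in N$, which yields the intermediate identity. The direction $\le$ is part of Theorem \ref{Theorem for finite}. For the reverse direction, I would lift a maximum strictly ascending chain $(a)\subsetneq(a_1)\subsetneq\cdots\subsetneq(a_{\psi(a)})$ and check that the sequence $\bar a\cdot\bar{a_1}\cdot\ldots\cdot\bar{a_{\psi(a)}}$ in $S/\mathcal H$ is additively irreducible with sum in the class $\bar a$: removing any term either changes the $\mathcal H$-class of the partial sum or discards a strictly larger principal-ideal generator, so irreducibility is forced by the chain structure.

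Finally, the lower bound. Pick $a^*\in S$ achieving $M$. If $a^*\in G$, take any sequence in $G$ realizing $d(G,K)=d(\Gamma(H_{a^*}),K)$; it is algebraically zero-sum free over $S$ since $K[X;G]$ embeds in $K[X;S]$ as a subring. Assume $a^*\in N$. I would concatenate (i) a chain-realizing sequence $n_0=a^*,n_1,\ldots,n_{\psi(a^*)}$ in $N$ coming from the previous step, and (ii) a sequence $g_1,\ldots,g_\ell\in\mathrm{St}(H_{a^*})\subseteq G$ with $\ell=d(\Gamma(H_{a^*}),K)$ whose image under $\pi_{a^*}$ is algebraically zero-sum free over $\Gamma(H_{a^*})$. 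To see the corresponding product
\[
\prod_{i=0}^{\psi(a^*)}\bigl(X^{n_i}-c_iX^{e(n_i)}\bigr)\cdot\prod_{j=1}^{\ell}\bigl(X^{g_j}-d_jX^{e(g_j)}\bigr)
\]
is nonzero in $K[X;S]$, I would stratify the expansion by the $\mathcal H$-preorder: the first factor contributes a unique ``bottom'' monomial $X^{\sigma}$ with $\sigma=n_0+\cdots+n_{\psi(a^*)}\in H_{a^*}$ (coming from choosing each $X^{n_i}$), while all other terms lie in strictly larger principal ideals; multiplication by the second factor, interpreted through $\pi_{a^*}$ as a nonvanishing element of $K[\Gamma(H_{a^*})]$, keeps a nonzero coefficient on a monomial indexed in $H_{a^*}$, and this survives because the larger-ideal contributions cannot cancel into $H_{a^*}$. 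The main obstacle is exactly this non-cancellation argument: it rests on the strict $\mathcal H$-descent of the partial sums $n_0+\cdots+n_k$ along the chain, together with the faithful action of $\Gamma(H_{a^*})$ on the single $G$-orbit $H_{a^*}$, and these two features are precisely what the elementary semigroup structure supplies.
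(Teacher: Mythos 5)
Your overall strategy coincides with the paper's: reduce to the lower bound via Theorem \ref{Theorem for finite}, split into the cases $a^*\in G$, $a^*=\infty$, $a^*\in N\setminus\{\infty\}$, and in the last case concatenate a chain-realizing sequence in $N$ with a group sequence whose image in $\Gamma(H_{a^*})$ is algebraically zero-sum free, using the simple transitivity of $\Gamma(H_{a^*})$ on $H_{a^*}$ to keep a coefficient alive. Your detour through proving $D_{\bar a}(S\diagup\mathcal H)=\psi(a)+1$ is unnecessary: once $d(S,K)\ge\max_a\{\epsilon_a(\psi(a)+1)+d(\Gamma(H_a),K)\}$ is established, the chain of inequalities from Theorem \ref{Theorem for finite} collapses and both displayed equalities follow simultaneously; moreover your irreducibility claim there ("removing any term changes the class") is only sketched and is not needed.

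The real problem is that your justification of the crucial non-cancellation step is stated backwards, and as written it describes an expansion that does not occur. Each nil-part factor is $X^{n_i}-c_iX^{e(n_i)}$ with $e(n_i)=\infty$, since every element of the finite nilsemigroup $N$ has $\infty$ as the unique idempotent of its cyclic subsemigroup. Hence every cross term of $\prod_i\bigl(X^{n_i}-c_iX^{e(n_i)}\bigr)$ contains at least one factor $X^{\infty}$, and because $\infty$ is absorbing its exponent is $\sum_{i\in I}n_i+\infty=\infty$. The cross terms therefore do not ``lie in strictly larger principal ideals''; they all collapse onto the single monomial $X^{\infty}$, which sits at the very bottom of the $\mathcal H$-preorder (compare Lemma \ref{Lemma elments of V}, which gives $\preceq_{\mathcal H}$, not $\succeq_{\mathcal H}$). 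The nil product thus equals $X^{\sigma}+cX^{\infty}$ with $\sigma=\sum_i n_i\in H_{a^*}$, and non-cancellation holds for a simpler reason than the one you give: the group factors send $K[X;H_{a^*}]$ into itself and $X^{\infty}$ to a scalar multiple of $X^{\infty}$, and $\infty\notin H_{a^*}$ because $a^*\neq\infty$. With this correction --- which is exactly the computation in the paper --- your argument closes; without it, the stratification ``by strictly larger ideals'' that your cancellation argument rests on simply does not exist.
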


\medskip

To prove the main results of this section, we need some lemmas.

\begin{lemma}\label{lemma with idempotent} \ Let $S$ be a periodic commutative semigroup. For any $s_1,\ldots,s_k\in S$, we have that $e(\sum\limits_{i=1}^k s_i)=\sum\limits_{i=1}^k e(s_i)$.
\end{lemma}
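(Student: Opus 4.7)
The plan is to exploit the defining property that the cyclic semigroup $\langle y\rangle$ of any periodic element $y$ contains exactly one idempotent, namely $e(y)$. So to prove the equality it suffices to verify two things about $z:=e(s_1)+\cdots+e(s_k)$: first, that $z$ is an idempotent of $S$, and second, that $z$ lies in the cyclic semigroup $\langle s_1+\cdots+s_k\rangle$. Uniqueness of the idempotent in this cyclic semigroup will then force $z=e(s_1+\cdots+s_k)$.

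For the first point, I would observe that in a commutative semigroup the sum of any finite family of (not necessarily distinct) idempotents is again an idempotent: expanding $z+z$, using commutativity to pair each $e(s_i)$ with itself, and applying $e(s_i)+e(s_i)=e(s_i)$ to each summand gives back $z$.

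For the second point, the key step is to produce a single positive integer $m$ such that $m\,s_i=e(s_i)$ holds simultaneously for every $i\in[1,k]$. Since $\rho(s_i)\,s_i=e(s_i)$ is an idempotent, one has $(j\,\rho(s_i))\,s_i=j\cdot e(s_i)=e(s_i)$ for every $j\geq 1$; hence any common multiple of $\rho(s_1),\ldots,\rho(s_k)$ works, for instance $m=\rho(s_1)\rho(s_2)\cdots\rho(s_k)$ (or the lcm). Commutativity of $S$ then yields $m\bigl(s_1+\cdots+s_k\bigr)=m\,s_1+\cdots+m\,s_k=e(s_1)+\cdots+e(s_k)=z$, so $z\in\langle s_1+\cdots+s_k\rangle$.

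There is no genuine obstacle here; the only point that must be checked with care is the existence of the common multiplier $m$, which is exactly where the hypothesis that $S$ is periodic enters. Combining the two observations with the uniqueness of the idempotent in $\langle s_1+\cdots+s_k\rangle$ completes the proof.
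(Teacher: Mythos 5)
Your proof is correct and follows essentially the same route as the paper: both arguments produce a common multiplier $m$ (the paper uses $\rho(s_1)\rho(s_2)$ after reducing to $k=2$, you use the product over all $i$) so that $m(s_1+\cdots+s_k)=e(s_1)+\cdots+e(s_k)$, note that a sum of idempotents is idempotent, and conclude by uniqueness of the idempotent in the finite cyclic semigroup $\langle s_1+\cdots+s_k\rangle$. The only difference is that you handle general $k$ directly rather than reducing to $k=2$, which is immaterial.
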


\begin{proof} It suffices to show the conclusion when $k=2$. Denote $m_i=\rho(s_i)$ for $i=1,2$. Then it follows from \eqref{equation e(x)=rho(x)x} that
$m_1m_2(s_1+s_2)=m_1m_2s_1+ m_1m_2 s_2=m_2(m_1 s_1)+m_1 (m_2 s_2)=m_2 e(s_1) +m_1 e(s_2)=e(s_1)+e(s_2)$. Since the sum of idempotents of $S$ is still an idempotent, we have that $e(s_1)+e(s_2)=m_1m_2(s_1+s_2)$ is an idempotent in the cyclic semigroup $\langle s_1+s_2\rangle$ generated by the element $s_1+s_2$. It follows that $e(s_1+s_2)=e(s_1)+e(s_2)$, completing the proof.
\end{proof}

\begin{lemma}\label{Lemma cyclic semigroup} (\cite{Grillet monograph},  Chapter I, Lemma 5.7, Proposition 5.8, Corollary 5.9) \ Let $\mathcal{S}=C_{k; n}$ be a finite cyclic semigroup generated by the element $x$. Then  $\mathcal{S}=\{x,\ldots,k x,(k+1)x,\ldots,(k+n-1)x\}$
with
$$\begin{array}{llll} & ix+jx=\left \{\begin{array}{llll}
               (i+j)x, & \mbox{ if } \  i+j \leq  k+n-1;\\
                tx, &  \mbox{ if }  \ i+j \geq k+n, \ \mbox{ where} \  k\leq t\leq k+n-1 \ \mbox{ and } \ t\equiv i+j\pmod{n}. \\
              \end{array}
           \right. \\
\end{array}$$
Moreover, there exists a unique idempotent, $\ell x$, in the cyclic semigroup $\langle x\rangle$, where $$\ell\in [k,k+n-1] \  \mbox{ and }\  \ell\equiv 0\pmod {n};$$
\end{lemma}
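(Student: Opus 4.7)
The plan is to derive all three assertions of the lemma directly from the defining minimality properties of the index $k$ and the period $n$, without invoking any further semigroup-theoretic machinery. Throughout, the only equalities I will use are $kx=(k+n)x$ (definition of $n$) and the fact that $x,2x,\ldots,(k-1)x$ admit no coincidences among themselves or with anything later (definition of $k$).

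First, to establish the enumeration $\mathcal{S}=\{x,2x,\ldots,(k+n-1)x\}$ with all $k+n-1$ listed elements distinct, I would argue in two blocks. Distinctness within the ``tail'' $\{x,\ldots,(k-1)x\}$ is immediate from the minimality of $k$. Distinctness within the ``kernel'' $\{kx,(k+1)x,\ldots,(k+n-1)x\}$ follows from the minimality of $n$, since a coincidence $(k+i)x=(k+j)x$ with $0\le i<j\le n-1$ would give $(k+j-i)x=kx$ with $j-i<n$. To rule out overlap between the two blocks, suppose $ix=(k+j)x$ with $i<k$; adding $(k-i)x$ to both sides produces $kx=(2k+j-i)x$, and repeating this reduction eventually contradicts either the minimality of $k$ or the minimality of $n$. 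Finally, every $mx$ with $m\ge k+n$ lies in the list because the relation $(k+n)x=kx$ gives $mx=(m-n)x$ for all $m\ge k+n$, so iterating lands in $[k,k+n-1]$.

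Second, the addition rule falls out of the previous step. In the free cyclic monoid one has $ix+jx=(i+j)x$, so it suffices to reduce the exponent. If $i+j\le k+n-1$ nothing is needed; otherwise the reduction $mx=(m-n)x$ valid for $m\ge k+n$ is iterated until the exponent lands in $[k,k+n-1]$, preserving the residue modulo $n$, which yields the unique $t\in[k,k+n-1]$ with $t\equiv i+j\pmod n$.

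Finally, for the unique idempotent, an element $\ell x\in\mathcal{S}$ is idempotent iff $(2\ell)x=\ell x$. If $\ell<k$ then $2\ell\ne\ell$ and the distinctness in the tail rules this out, so necessarily $\ell\ge k$; applying the addition rule then forces $2\ell\equiv\ell\pmod n$, i.e.\ $\ell\equiv 0\pmod n$. Since $[k,k+n-1]$ contains exactly one multiple of $n$, this proves both existence and uniqueness. The only mildly delicate step in the whole argument is the non-overlap claim between the tail and kernel blocks; everything else is a straightforward consequence of the minimality conditions defining $k$ and $n$.
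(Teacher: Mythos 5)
The paper itself offers no proof of this lemma --- it is quoted directly from Grillet's monograph (Chapter I, Lemma 5.7, Proposition 5.8, Corollary 5.9) --- so your self-contained derivation from the minimality of $k$ and $n$ is a genuinely different and more elementary route; it is essentially the standard textbook argument, and its overall architecture (enumerate and separate tail from kernel, reduce exponents by $n$ above $k+n$, characterize idempotents by $2\ell\equiv\ell$) is sound. One simplification: the step you flag as ``mildly delicate'' is in fact immediate. Since $k$ is the \emph{least} $m>0$ with $mx=tx$ for some $t\neq m$, an equality $ix=(k+j)x$ with $i<k$ already exhibits such a collision at $m=i<k$ (note $k+j\neq i$) and is ruled out at once; no iterated reduction is needed. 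The same one-line observation gives distinctness within the tail and disposes of the case $\ell<k$ in the idempotent argument.

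There is, however, one step that does not work as written: the kernel-distinctness argument. From $(k+i)x=(k+j)x$ with $0\le i<j\le n-1$ you cannot pass to $(k+j-i)x=kx$, because that amounts to cancelling $ix$, and semigroups are not cancellative; the deduction is only valid when $i=0$. The repair is to \emph{add} $(n-j)x$ to both sides (legitimate since $j\le n-1$), which yields $\bigl(k+n-(j-i)\bigr)x=(k+n)x=kx$ with $0<n-(j-i)<n$, contradicting the minimality of $n$ just as well. With that fix the rest goes through; for the existence half of the idempotent claim you should also record that the unique multiple $\ell$ of $n$ in $[k,k+n-1]$ satisfies $\ell\ge n$ and $\ell\ge k$, hence $2\ell\ge k+n$, so the second branch of your addition rule applies and gives $(2\ell)x=\ell x$.
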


\begin{lemma}\label{Lemma subsemigroup} Let $S$ be a finite commutative semigroup, and let $W$ be a subsemigroup of $S$. Let $R$ be a commutative unitary ring.
If $d(S,R)$ is finite, then $d(W,R)\leq d(S,R)$.
\end{lemma}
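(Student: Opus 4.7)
The plan is to exploit the fact that the inclusion of subsemigroups induces an embedding of semigroup rings, which preserves the algebraic-zero-sum-free condition verbatim.

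First I would verify the harmless but necessary compatibility of idempotents. For any $s\in W$, the cyclic subsemigroup $\langle s\rangle$ generated by $s$ inside $W$ consists exactly of the elements $s,2s,3s,\ldots$, and thus coincides with the cyclic subsemigroup of $S$ generated by $s$. Since $S$ is finite (hence periodic) and $W$ inherits periodicity, the unique idempotent $e(s)$ of $\langle s\rangle$ is well-defined and independent of whether we view $s$ as a member of $W$ or of $S$. This ensures the factors $X^{s}-a\,X^{e(s)}$ appearing in Definition \ref{Definition d(S,R)} have a meaning that does not depend on the ambient semigroup.

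Next I would set up the embedding of semigroup rings. Because $W$ is closed under the operation of $S$, the natural $R$-linear map
$$\iota: R[X;W]\longrightarrow R[X;S], \qquad \sum_{w\in W} a_w X^w \longmapsto \sum_{w\in W} a_w X^w,$$
is a well-defined injective ring homomorphism: the products $X^{w}\cdot X^{w'}=X^{w+w'}$ computed in either ring agree since $w+w'\in W\subseteq S$, and injectivity is clear by comparing coefficients on the basis $\{X^w:w\in W\}$.

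The argument is then a one-line pullback. Suppose $T=s_1\cdot\ldots\cdot s_\ell\in\mathcal{F}(W)$ is algebraically zero-sum free over $R$ with respect to $W$, i.e. for every choice of $a_1,\ldots,a_\ell\in R\setminus\{0\}$ the element $\prod_{i=1}^{\ell}(X^{s_i}-a_i X^{e(s_i)})\in R[X;W]$ is nonzero. Applying $\iota$ and using injectivity shows that the same product, now computed in $R[X;S]$, is nonzero. Hence $T$, viewed as a sequence over $S$, is algebraically zero-sum free over $R$, so $|T|\le d(S,R)$. Taking the supremum over all such $T$ yields $d(W,R)\le d(S,R)$.

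There is no real obstacle here; the only point that must be checked carefully is that the idempotents $e(s_i)$ occurring in the two definitions genuinely coincide, which I addressed in the first step. The finiteness hypothesis is used only to ensure that both invariants are defined (periodicity) and that $d(S,R)<\infty$ is the hypothesis under which the inequality is asserted.
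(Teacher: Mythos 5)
Your proof is correct, and it is the evidently intended argument: the paper states this lemma without proof, treating it as routine, and the natural route is exactly the one you take — the inclusion $W\hookrightarrow S$ induces an injective ring homomorphism $R[X;W]\to R[X;S]$ (using that $W$ is closed under the operation, so the convolution products agree), the idempotents $e(s)$ computed in $W$ and in $S$ coincide, and hence every algebraically zero-sum free sequence over $W$ remains one over $S$. Your explicit check of these two compatibility points is exactly what makes the one-line pullback legitimate.
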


\begin{lemma}\label{Lemma Greens class} (see \cite{Grillet monograph}, Chapter I, Proposition 4.3) \ Let $S$ be a commutative semigroup, and let $s$ be an element of $S$. Then $H_s$ is a group if and only if $H_s$ contains an idempotent.
\end{lemma}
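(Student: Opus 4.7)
The forward direction is immediate: a group always contains an identity, and the identity of any group is an idempotent. So the real content is the converse, which I would prove by exhibiting the group structure on $H_s$ explicitly, using the idempotent $e \in H_s$ as the prospective identity.

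The first step is to show that $e$ acts as an identity on $H_s$, i.e.\ $e + x = x$ for every $x \in H_s$. Since $x \mathcal{H} e$ we have $(x) = (e)$, so by the definition $(a) = \{a\} \cup \{a+c : c \in S\}$ used in the paper, either $x = e$ or $x = e + c_x$ for some $c_x \in S$. In the first case $e + x = e + e = e = x$ by idempotency, and in the second case $e + x = e + e + c_x = e + c_x = x$, again using $e + e = e$. Next I would verify closure: given $x, y \in H_s$, write (aside from the trivial subcase $x = e$ or $y = e$, which follows directly from the identity property just established) $x = e + c_x$, $y = e + c_y$, so $x + y = e + (c_x + c_y) \in (e)$, giving $e \preceq_{\mathcal{H}} x + y$; conversely, from $e \in (x)$ and $e \in (y)$ we obtain $d_x, d_y \in S$ with $e = x + d_x$ and $e = y + d_y$, hence $e = e + e = (x + y) + (d_x + d_y)$, which gives $x + y \preceq_{\mathcal{H}} e$. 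Combining, $(x+y) = (e)$, so $x + y \in H_e = H_s$.

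Finally, to produce inverses, I take $x \in H_s$ with $x \neq e$, pick $d_x \in S$ with $x + d_x = e$, and set $y := e + d_x$. Then $x + y = x + e + d_x = e + e = e$, so $y$ is an inverse of $x$ under the operation with identity $e$. It remains to check $y \in H_s$: clearly $y \in (e)$, so $e \preceq_{\mathcal{H}} y$; and $e = e + e = e + x + d_x = x + y$ shows $e \in (y)$, so $y \preceq_{\mathcal{H}} e$. If $x = e$, then $x$ is its own inverse.

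The main technical nuisance (not really an obstacle) is simply keeping track of the paper's non-standard convention $(a) = \{a\} \cup \{a + c : c \in S\}$ rather than the adjoin-identity version $(a) = \{a + c : c \in S^1\}$; this forces the small case splits $x = e$ versus $x = e + c_x$ in the identity, closure, and inverse arguments above. Once those splits are handled uniformly the proof is entirely formal and mirrors the standard one found in Chapter~I of \cite{Grillet monograph}.
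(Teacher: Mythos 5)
The paper does not prove this lemma at all: it is quoted verbatim from Grillet (Chapter~I, Proposition~4.3), so there is no in-paper argument to compare against. Your proof is correct and is essentially the standard one: you verify that the idempotent $e\in H_s$ is a two-sided identity on $H_s$, that $H_s$ is closed under $+$, and that every element has an inverse inside $H_s$, with associativity and commutativity inherited from $S$; the case splits forced by the convention $(a)=\{a\}\cup\{a+c: c\in S\}$ are handled correctly. The only blemish is a harmless label swap in the closure step: $x+y=e+(c_x+c_y)$ gives $x+y\preceq_{\mathcal{H}} e$ (not $e\preceq_{\mathcal{H}} x+y$), while $e=(x+y)+(d_x+d_y)$ gives $e\preceq_{\mathcal{H}} x+y$; since you derive both containments, the conclusion $(x+y)=(e)$ and hence $x+y\in H_s$ still stands.
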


\begin{lemma}\label{Lemma Schuzebure} (see  \cite{Grillet monograph}, Chapter I, Proposition 4.6) \ Let $S$ be a commutative semigroup, and let $s$ be an element of $S$ such that ${\rm St}(H_s)\neq \emptyset$. Then $\Gamma(H_s)$ is a simply transitive group of permutations of $H_s$, and $t\mapsto \gamma_t$ is a homomorphism of ${\rm St}(H_s)$ onto $\Gamma(H_s)$. If $H_s$ is a group, then $\Gamma(H_s)\cong H_s$.
\end{lemma}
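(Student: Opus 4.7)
The plan is to verify three successive claims: that $t\mapsto\gamma_t$ is a surjective semigroup homomorphism from ${\rm St}(H_s)$ onto $\Gamma(H_s)$, that $\Gamma(H_s)$ acts simply transitively on $H_s$ (which, together with commutativity of $S$, forces $\Gamma(H_s)$ to be an abelian group of permutations of $H_s$), and that when $H_s$ happens to be a group, restriction yields the isomorphism $H_s\cong\Gamma(H_s)$.

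The homomorphism claim is essentially formal. Well-definedness of $\gamma_t$ as a map $H_s\to H_s$ is precisely the definition of ${\rm St}(H_s)$; associativity of $+$ in $S$ gives $(\gamma_c\circ\gamma_d)(x)=c+(d+x)=(c+d)+x$, so $\gamma_c\circ\gamma_d=\gamma_{c+d}$, and surjectivity and commutativity of $\Gamma(H_s)$ follow. The substance lies in establishing transitivity, simple transitivity, and the group structure. Fix $x_0\in H_s$. For any $y\in H_s$ with $y\neq x_0$, the relation $(y)=(x_0)$ yields some $c\in S$ with $c+x_0=y$. I would then verify $c\in{\rm St}(H_s)$: for any $z\in H_s$, expand $z=x_0$ or $z=d+x_0$, and expand $x_0\in(z)$ to obtain $x_0=z+e$ or $x_0=z$; a direct computation combining these with commutativity yields $(c+z)=(x_0)$, so $c+z\in H_s$. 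Uniqueness (simple transitivity) is easier: if $\gamma_{c_1}(x_0)=\gamma_{c_2}(x_0)$, the expansion $z=d+x_0$ for arbitrary $z\in H_s$ and commutativity give $\gamma_{c_1}(z)=\gamma_{c_2}(z)$.

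The main obstacle is the construction of an identity for $\Gamma(H_s)$, which is exactly the point where the hypothesis ${\rm St}(H_s)\neq\emptyset$ is indispensable. Picking any $c\in{\rm St}(H_s)$, the membership $c+x_0\in H_s$ yields $(c+x_0)=(x_0)$, which forces either $x_0=c+x_0$ or $x_0=c+x_0+f$ for some $f\in S$. In both cases I would produce an element $t\in S$ (namely $t=c$ or $t=c+f$) such that $t+x_0=x_0$; commutativity then upgrades this to $t+z=z$ for every $z\in H_s$, so $t\in{\rm St}(H_s)$ and $\gamma_t$ is the identity of $\Gamma(H_s)$. Inverses arise by applying transitivity in the reverse direction: for each $c\in{\rm St}(H_s)$, writing $y=c+x_0$ and repeating the transitivity argument produces $d\in{\rm St}(H_s)$ with $d+y=x_0$, i.e., $\gamma_d\circ\gamma_c=\gamma_t$. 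Hence each $\gamma_c$ is a bijection and $\Gamma(H_s)$ is an abelian group.

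Finally, when $H_s$ is a group, Lemma \ref{Lemma Greens class} produces an idempotent $e\in H_s$ serving as its identity; since $H_s+H_s\subseteq H_s$ in this case, we have $H_s\subseteq{\rm St}(H_s)$, and the restricted map $h\mapsto\gamma_h$ is a group homomorphism $H_s\to\Gamma(H_s)$ whose two-sided inverse is evaluation at $e$, so the restriction is the desired isomorphism. The single delicate step in the whole proof is the fabrication of the identity element $t$; once that is in hand, transitivity, simple transitivity, and the existence of inverses all unfold by the same $\mathcal{H}$-class bookkeeping.
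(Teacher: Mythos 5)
The paper does not prove this lemma --- it is quoted verbatim from Grillet's monograph (Chapter I, Proposition 4.6) --- so there is no internal proof to compare against. Your argument is a correct, self-contained proof along the standard lines of Schützenberger's construction specialized to the commutative case: the $\mathcal{H}$-class bookkeeping for transitivity and for membership of the translating element in ${\rm St}(H_s)$ checks out, the manufacture of the identity $\gamma_t$ from an arbitrary $c\in{\rm St}(H_s)$ is exactly where the nonemptiness hypothesis enters, and the two-sided inverse ``evaluation at $e$'' correctly yields $H_s\cong\Gamma(H_s)$ in the group case (using $\gamma_{c+e}=\gamma_c$ via simple transitivity).
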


\begin{lemma}\label{Lemma maximal subgroup} (see \cite{Grillet monograph}, Chapter I, Corollary 4.5) \ Let $S$ be a commutative semigroup. The maximal subgroups of $S$ coincide with the Green's classes which contains idempotents. They are pairwise disjoint. Every subgroup of $S$ is contained in exactly one maximal subgroups.
\end{lemma}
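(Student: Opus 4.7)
The plan is to derive everything from Lemma \ref{Lemma Greens class} (which characterizes group $\mathcal{H}$-classes as those containing an idempotent) together with the elementary fact that a subgroup of $S$ has a unique identity, which is necessarily an idempotent of $S$. The core lemma I would isolate first is: \emph{every subgroup $G$ of $S$ is contained in $H_{e_G}$, where $e_G$ is the identity of $G$.} Once this is established, the three claims fall out almost immediately.

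To prove that containment, fix $g \in G$ and let $g'$ be its inverse in $G$. Then $e_G = g + g'$ shows $e_G \in (g)$, while $g = g + e_G$ shows $g \in (e_G)$. Hence $(g) = (e_G)$, i.e.\ $g\,\mathcal{H}\,e_G$, so $g \in H_{e_G}$. By Lemma \ref{Lemma Greens class}, $H_{e_G}$ is itself a group. Consequently every subgroup $G$ sits inside the group $H_{e_G}$, and in particular every \emph{maximal} subgroup must equal some such $H_e$ with $e$ idempotent. Conversely, every $\mathcal{H}$-class containing an idempotent $e$ is a group by Lemma \ref{Lemma Greens class}, and must be maximal, because any larger subgroup $G'$ containing it contains $e$; since the identity of $G'$ is the unique idempotent of $G'$, that identity is $e$, and then the core lemma gives $G' \subseteq H_e$, forcing equality. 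This proves that the maximal subgroups of $S$ are exactly the Green's classes containing an idempotent.

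For disjointness, suppose two such classes $H_{e_1}$ and $H_{e_2}$ share an element $a$. Then $e_1\,\mathcal{H}\,a\,\mathcal{H}\,e_2$, so $H_{e_1} = H_{e_2}$ (and in fact $e_1 = e_2$, since each is the identity of this common group). Finally, for the third assertion, any subgroup $G$ is contained in $H_{e_G}$ by the core lemma, and it cannot lie in a second maximal subgroup $H_{e'}$ because that would put $e_G \in H_{e'}$, forcing $H_{e'} = H_{e_G}$ by disjointness.

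There is no serious obstacle here; the only thing to watch is that in the maximality step one must argue that any idempotent in a subgroup is its identity (so that a subgroup strictly larger than $H_e$ would still have $e$ as its identity, contradicting $e$ being already the identity of $H_e$). Everything else is a short manipulation of principal ideals combined with Lemma \ref{Lemma Greens class}.
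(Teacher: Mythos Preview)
Your argument is correct. Note, however, that the paper does not supply its own proof of this lemma: it is quoted verbatim from Grillet's monograph (Chapter~I, Corollary~4.5) and used as a black box. Your write-up therefore goes beyond what the paper does, giving a self-contained derivation from Lemma~\ref{Lemma Greens class}. The only delicate point---that an idempotent $e$ lying in a subgroup $G'$ must be the identity of $G'$---you handle correctly (in a group the equation $e+e=e$ forces $e$ to be the identity), so the maximality step is sound.
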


\begin{lemma} \label{Lemma relative and DAvenport} (see Proposition 2.1 of \cite{wangAddtiveirreducible}) \ Let $S$ be a commutative semigroup. Then ${\rm D(S)}$ is finite if, and only if ${\rm D}_a(S)$ is bounded for all $a\in S$, i.e., there exists
a given large integer $M$ such that ${\rm D}_a(S)\leq M$ for all $a\in S$. In particular, if ${\rm D}(S)$ is
finite, then ${\rm D}(S)=1+\max\limits_{a\in S} \{{\rm D}_a(S)\}$.
\end{lemma}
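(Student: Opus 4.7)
The plan is to unpack the definitions and observe that both ${\rm D}(S)$ and $\sup_{a\in S} {\rm D}_a(S)$ can be written in terms of the same underlying quantity, namely the supremum of lengths of irreducible sequences over $S$. Set
\[
N(S) \;=\; \sup\bigl\{|T| : T\in \mathcal{F}(S)\text{ is irreducible}\bigr\}\in \mathbb{N}\cup\{\infty\}.
\]
First I would show ${\rm D}(S) = N(S)+1$. The inequality ${\rm D}(S) \geq N(S)+1$ follows because any irreducible $T$ of length $|T|\leq N(S)$ witnesses that the defining property of ${\rm D}(S)$ fails at length $|T|$, so ${\rm D}(S)$ cannot be $\leq N(S)$. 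Conversely, ${\rm D}(S) \leq N(S)+1$ since any sequence of length at least $N(S)+1$ must contain a proper subsequence with the same sum (otherwise it would itself be irreducible of length $>N(S)$, contradicting the definition of $N(S)$); this uses the straightforward fact that every sequence which is not irreducible contains a proper subsequence with the same sum, so reducibility propagates to longer sequences.

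Next I would relate $N(S)$ to $\sup_a {\rm D}_a(S)$ by partitioning irreducible sequences according to their sum. Every irreducible sequence $T$ satisfies $|T|\leq {\rm D}_{\sigma(T)}(S)$ and conversely every value ${\rm D}_a(S)$ is realized (or approached) by lengths of irreducible sequences with sum $a$, so
\[
N(S) \;=\; \sup_{a\in S}\, {\rm D}_a(S).
\]
Combining with ${\rm D}(S)=N(S)+1$ immediately gives both claims: $N(S)<\infty$ is equivalent to the existence of a uniform bound $M$ with ${\rm D}_a(S)\leq M$ for every $a\in S$, which in turn is equivalent to ${\rm D}(S)<\infty$; and when finite, the supremum is attained as a maximum (since it is a nonnegative integer), yielding ${\rm D}(S) = 1 + \max_{a\in S}\{{\rm D}_a(S)\}$.

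There is no real obstacle here, as the statement is essentially a reformulation of definitions. The only mildly delicate point is making sure the ``$\sup$'' becomes a ``$\max$'' in the last displayed equality: this requires noting that when ${\rm D}(S)$ is finite, the set $\{{\rm D}_a(S) : a\in S\}$ is a set of nonnegative integers bounded above by ${\rm D}(S)-1$, and the value ${\rm D}(S)-1$ is actually attained by ${\rm D}_a(S)$ for $a = \sigma(T)$ where $T$ is any maximal-length irreducible sequence (whose existence is guaranteed since $N(S)$ is a finite nonnegative integer, so the supremum defining it is realized).
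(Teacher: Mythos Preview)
Your argument is correct. Note, however, that the paper does not actually prove this lemma: it is quoted from \cite{wangAddtiveirreducible} (Proposition~2.1) and stated without proof, so there is no ``paper's own proof'' to compare against. Your approach---introducing $N(S)=\sup\{|T|:T\text{ irreducible}\}$, showing ${\rm D}(S)=N(S)+1$ directly from the definitions, and then observing $N(S)=\sup_{a}{\rm D}_a(S)$ by partitioning irreducible sequences according to their sum---is the natural and standard way to establish this, and is presumably what the cited reference does as well.
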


\begin{lemma} \label{Lemma relative and quotient} \ Let $S$ be a finite commutative semigroup.
For any $a\in S$, we have ${\rm D}_{\bar{a}}(S\diagup \mathcal{H})\leq \psi(a)+1$, where $\bar{a}$ denotes the image of $a$ under the canonical epimorphism of $S$ onto $S\diagup \mathcal{H}$.
\end{lemma}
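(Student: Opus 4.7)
The plan is to extract, from any witness sequence for ${\rm D}_{\bar a}(S\diagup \mathcal{H})$, a strictly ascending chain of principal ideals starting from $(a)$ in $S$. Let $T=\bar b_1\bullet\cdots\bullet\bar b_\ell$ be an irreducible sequence over $S\diagup\mathcal{H}$ with $\sigma(T)=\bar a$ and $|T|=\ell$, where $b_1,\ldots,b_\ell\in S$ are chosen representatives. Form the partial sums $c_i=b_1+\cdots+b_i\in S$ for $i\in[1,\ell]$. Since $\overline{c_\ell}=\sigma(T)=\bar a$, i.e.\ $c_\ell\ \mathcal{H}\ a$, we have $(c_\ell)=(a)$; and since $c_i=c_{i-1}+b_i$, we have $c_i\preceq_{\mathcal{H}} c_{i-1}$, hence $(c_{i-1})\supseteq(c_i)$ for every $i\in[2,\ell]$.

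The crux of the argument is to show that each of these inclusions is strict, by leveraging irreducibility. Suppose, for some $i\in[2,\ell]$, that $(c_{i-1})=(c_i)$. Then $c_{i-1}\ \mathcal{H}\ c_i$, i.e.\ $\bar c_{i-1}=\bar c_i$ in the quotient $S\diagup\mathcal{H}$. Consider the proper subsequence $T':=T\cdot\bar b_i^{[-1]}$ obtained by deleting the $i$-th term. Using the quotient operation,
$$\sigma(T')=\bar c_{i-1}+\bar b_{i+1}+\cdots+\bar b_\ell=\bar c_i+\bar b_{i+1}+\cdots+\bar b_\ell=\bar c_\ell=\bar a=\sigma(T),$$
contradicting the irreducibility of $T$. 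Hence
$$(c_1)\supsetneq(c_2)\supsetneq\cdots\supsetneq(c_\ell)=(a),$$
which, read from bottom to top, produces a strictly ascending chain of $\ell-1$ principal ideals above $(a)$. By definition of $\psi$, this gives $\psi(a)\ge\ell-1$, that is, $\ell\le\psi(a)+1$. Taking the supremum over all such witnessing $T$ yields ${\rm D}_{\bar a}(S\diagup\mathcal{H})\le\psi(a)+1$, as desired.

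No substantial obstacle is anticipated. The only point that warrants care is the equivalence between the ideal identity $(c_{i-1})=(c_i)$ in $S$ and the element identity $\bar c_{i-1}=\bar c_i$ in $S\diagup\mathcal{H}$; this is immediate from the definition of $\mathcal{H}$ as the congruence identifying elements with the same principal ideal, together with the fact that addition in $S\diagup\mathcal{H}$ is well-defined so that deletion of a single term in a sequence over $S\diagup\mathcal{H}$ translates into the removal of the corresponding summand on both sides of the relevant equation.
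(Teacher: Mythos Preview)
Your proof is correct and follows essentially the same approach as the paper: take a witnessing irreducible sequence of length $\ell$, form the partial sums of chosen representatives, and observe that irreducibility forces these partial sums to form a strictly descending $\mathcal{H}$-chain ending in the class of $a$, whence $\ell-1\le\psi(a)$. The paper compresses your deletion argument into the single phrase ``by the irreducibility of the sequence,'' but the underlying reasoning is identical.
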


\begin{proof}  Denote $\ell={\rm D}_{\bar{a}}(S\diagup \mathcal{H})$. Take $a_1,\ldots,a_{\ell}\in S$ such that $\bar{a}_1\cdot\ldots\cdot\bar{a}_{\ell}$ is an additive irreducible sequence over the quotient semigroup $S\diagup \mathcal{H}$ with $\sum\limits_{i=1}^{\ell}\bar{a}_i=\bar{a}$, i.e.,
$\sum\limits_{i=1}^{\ell}a_{i}  \ \mathcal{H} \ a$.  By the irreducibility of of the sequence $\bar{a}_1\cdot\ldots\cdot\bar{a}_{\ell}$, we have that
$a_1\succ_{\mathcal{H}}(a_1+a_2)\succ_{\mathcal{H}}\cdots \succ_{\mathcal{H}} \sum\limits_{i=1}^{\ell}a_{i} \ \mathcal{H} \ a,$ which implies that $\ell-1\leq \psi(a)$, done.
\end{proof}

\begin{lemma}\label{Lemma splitting field still} Let $S$ be a finite commutative semigroup with $\exp(S)=n$, and let $K$ be a splitting field of $S$. For any $a\in S$ such that ${\rm St}(H_a)\neq \emptyset$, then $\exp(\Gamma(H_a))\mid n$ and
$K$ is also a splitting field of the group $\Gamma(H_a)$.
\end{lemma}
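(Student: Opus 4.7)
The plan is to establish the divisibility $\exp(\Gamma(H_a)) \mid n$ first, from which the splitting-field claim for $K$ over $\Gamma(H_a)$ will follow by elementary cyclic group theory.

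For the exponent, let $b$ be an arbitrary element of ${\rm St}(H_a)$, which is nonempty by hypothesis, and let $k_b$ and $n_b$ denote the index and period of the cyclic semigroup $\langle b\rangle$. By the definition $\exp(S)={\rm lcm}\{\mbox{period of } x: x\in S\}$, we have $n_b \mid n$. Lemma~\ref{Lemma cyclic semigroup} gives $(k_b+n_b)b = k_b b$ in $S$, hence in ${\rm St}(H_a)$. Applying the surjective semigroup homomorphism $\pi_a:{\rm St}(H_a)\to \Gamma(H_a)$ from \eqref{equation homo to shuzen}, together with the operation $\gamma_c+\gamma_d=\gamma_{c+d}$ in the abelian group $\Gamma(H_a)$, this relation becomes $(k_b+n_b)\gamma_b = k_b \gamma_b$. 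Now $\Gamma(H_a)$ is a group by Lemma~\ref{Lemma Schuzebure}, so subtracting $k_b \gamma_b$ yields $n_b \gamma_b = 0_{\Gamma(H_a)}$. Hence the order of $\gamma_b$ in $\Gamma(H_a)$ divides $n_b$, and therefore divides $n$. Since $\pi_a$ is surjective, every element of $\Gamma(H_a)$ has this form, so $\exp(\Gamma(H_a)) \mid n$.

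For the splitting-field claim, set $m = \exp(\Gamma(H_a))$, so $m \mid n$. Since $K$ is a splitting field of $S$, the set $\{\zeta \in K : \zeta^n = 1_K\}$ has exactly $n$ elements and hence is a cyclic subgroup of $K^{\ast}$ of order $n$. A cyclic group of order $n$ contains a unique subgroup of order $m$ for each divisor $m$ of $n$, namely the subset of its $m$-th roots of unity. Thus $\{\zeta \in K : \zeta^m = 1_K\}$ has exactly $m$ elements, which is precisely the condition that $K$ be a splitting field of $\mathbb{Z}_m$, and hence of $\Gamma(H_a)$.

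The only potential obstacle is that ${\rm St}(H_a)$ is merely a subsemigroup and its elements need not have inverses, so cancellation is not available there; the argument sidesteps this by pushing the identity $(k_b+n_b)b = k_b b$ through $\pi_a$ into the group $\Gamma(H_a)$ before cancelling. No deeper structural input appears necessary.
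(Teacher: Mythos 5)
Your proof is correct and takes essentially the same approach as the paper: both push the cyclic-semigroup relation for $b$ through the surjection $\pi_a$ into the group $\Gamma(H_a)$ to conclude that $o(\gamma_b)$ divides the period of $b$, hence divides $n$. The only cosmetic difference is that the paper obtains $\mathcal{P}(b)\gamma_b=0_\Gamma$ via the idempotent relation $\mathcal{P}(b)b+\rho(b)b=\rho(b)b$ and $\gamma_{e(b)}=0_\Gamma$, whereas you cancel $k_b\gamma_b$ from $(k_b+n_b)\gamma_b=k_b\gamma_b$ directly in the group; the splitting-field step is the same standard cyclic-group fact in both.
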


\begin{proof} We need only to prove $\exp(\Gamma(H_a))\mid n$. Take an arbitrary element $b$ of ${\rm St}(H_a)$.
Then
$\langle b \rangle\subset {\rm St}(H_a)$.
 Let $e=\rho(b) b$ be the idempotent in the cyclic semigroup $\langle a\rangle$. By Lemma \ref{Lemma Schuzebure}, we have that $\gamma_e=\gamma_{e+e}=\gamma_{e}+\gamma_{e}$ and so $\gamma_{e}$ is the identity element $0_{\Gamma}$ of the group $\Gamma(H_a)$.
 $\mathcal{P}(b)\gamma_b=\mathcal{P}(b)\gamma_b+0_{\Gamma}=\mathcal{P}(b)\gamma_b+\gamma_e
 =\gamma_{\mathcal{P}(b)b}+\gamma_{\rho(b)b}=\gamma_{\mathcal{P}(b)b+\rho(b)b}=\gamma_{\rho(b)b}=\gamma_e=0_{\Gamma}$, which implies that \begin{equation}\label{equation the order bdivides pb}
 o(\gamma_b)\mid \mathcal{P}(b).
 \end{equation}
 By \ref{Lemma Schuzebure}, the homomorphism $x\mapsto \gamma_x: {\rm St}(H_a)\rightarrow \Gamma(H_a)$ is surjective. Combined the arbitrariness of $b$ and \eqref{equation the order bdivides pb}, we have that $\exp(\Gamma(H_a))={\rm lcm}\{o(\gamma_x): x\in {\rm St}(H_a)\}\mid {\rm lcm} \{\mathcal{P}(b): b\in {\rm St}(H_a)\}\mid \exp(S)$, we are done.
\end{proof}

\begin{lemma}\label{Lemma group act on V} Let $S$ be a commutative semigroup, and let $s$ be an element of $S$. Suppose that ${\rm St}(H_s)\neq \emptyset$. Let $V=\{x\in S: x\preceq_{\mathcal{H}} s\}.$ Then the group $\Gamma(H_s)$ act on the set $V$ given by:
$\gamma_a\circ x=a+x$ for any $a\in {\rm St}(H_s)$ and $x\in V$.
\end{lemma}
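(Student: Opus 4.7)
The plan is to check, in order: (i) the assignment $\gamma_a \circ x := a + x$ is well-defined on equivalence classes in $\Gamma(H_s)$; (ii) the image $a + x$ genuinely lies in $V$; (iii) the two axioms of a group action are satisfied. The key observation throughout is that every $x \in V$ is, by definition of $\preceq_{\mathcal{H}}$, either equal to $s$ itself or of the form $s + c$ for some $c \in S$.

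For (i), suppose $a, b \in {\rm St}(H_s)$ represent the same element of $\Gamma(H_s)$, i.e., $a + y = b + y$ for every $y \in H_s$, and in particular $a + s = b + s$. If $x = s$, the required equality $a + x = b + x$ is immediate; if $x = s + c$, then $a + x = (a+s) + c = (b+s) + c = b + x$ by associativity and commutativity of $+$ in $S$. For (ii), since $a \in {\rm St}(H_s)$ we have $a + s \in H_s$, hence $a + s \preceq_{\mathcal{H}} s$; the case split then shows that $a + x$ is either $a + s$ or $(a + s) + c$, both of which are $\preceq_{\mathcal{H}} s$, so $a + x \in V$.

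For (iii), the compatibility $\gamma_a \circ (\gamma_b \circ x) = (\gamma_a + \gamma_b) \circ x$ follows directly from the defining rule \eqref{equation operation}, $\gamma_a + \gamma_b = \gamma_{a+b}$, together with associativity of $+$ in $S$. For the identity axiom, I would pick any $e \in {\rm St}(H_s)$ whose image under $\pi_s$ from \eqref{equation homo to shuzen} is the identity of $\Gamma(H_s)$; such $e$ exists by the surjectivity of $\pi_s$ recalled in Lemma \ref{Lemma Schuzebure}. Then $e + y = y$ for every $y \in H_s$, in particular $e + s = s$, and reapplying the case analysis of (i) with $b = e$ yields $e + x = x$ for every $x \in V$. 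The only conceptually nontrivial step is well-definedness in (i), whose obstacle is to exploit the specific form of elements of $V$ as dominated by $s$ in Green's preorder and thereby reduce the claim to the equation $a + s = b + s$ already available inside $H_s$; once that reduction is in place, the remaining verifications are mechanical.
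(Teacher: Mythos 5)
Your proof is correct and follows essentially the same route as the paper's: decompose $x\in V$ as $s$ or $s+c$, obtain the identity axiom from a representative $e$ with $\pi_s(e)=0_{\Gamma(H_s)}$, and get compatibility from $\gamma_\alpha+\gamma_\beta=\gamma_{\alpha+\beta}$ and associativity. You additionally verify well-definedness on $\Gamma(H_s)$-classes and closure $a+x\in V$, two points the paper's proof leaves implicit, so your write-up is if anything the more complete of the two.
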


\begin{proof} Take any $x\in V$. Then $x=s+b$ for some $b\in S$. Take an element $c\in {\rm St}(H_s)$ such that $\gamma_c$ is the identity element of the group $\Gamma(H_s)$. We see that $\gamma_c\circ x=c+x=c+(s+b)=(c+s)+b=(\gamma_c\circ s)+b=s+b=x$. Moreover, take any two elements $\alpha,\beta\in {\rm St}(H_s)$. We see that $(\gamma_{\alpha}+\gamma_{\beta}) \circ x=(\gamma_{\alpha+\beta}) \circ x=(\alpha+\beta)+x=\alpha+(\beta+x)=\gamma_{\alpha}\circ (\gamma_{\beta} \circ x)$.  Then the conclusion follows from the action of groups on a set readily.
\end{proof}

\begin{lemma}\label{Lemma elments of V} Let $S$ be a finite commutative semigroup, and let $g_1,\ldots,g_t$ be elements of $S$ (not necessarily distinct). For any subset $I$ of $[1,t]$, we have $\sum\limits_{i\in I} g_i+\sum\limits_{j\in [1,t]\setminus I} e(g_j)\preceq_{\mathcal{H}} \sum\limits_{i=1}^t g_i$.
\end{lemma}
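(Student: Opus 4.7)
My plan is to prove this by a direct rewriting of each $e(g_j)$ as $g_j$ plus a genuine element of $S$, then collecting terms so that the full sum $\sum_{i=1}^t g_i$ appears explicitly on one side.

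First I would invoke the identity $e(g_j) = 2\rho(g_j)\,g_j$, which holds because $e(g_j)$ is an idempotent in $\langle g_j\rangle$ and so $e(g_j) = e(g_j)+e(g_j) = 2\rho(g_j)\,g_j$. Rewriting this as
\[
e(g_j) \;=\; g_j + \bigl(2\rho(g_j)-1\bigr) g_j,
\]
I note that $2\rho(g_j)-1 \geq 1$, so $(2\rho(g_j)-1) g_j$ is a legitimate element of $S$ regardless of whether $g_j$ happens to be an idempotent (that is, even when $\rho(g_j)=1$). This is precisely the reason the author warned earlier in the paper to use $x+(2\rho(x)-1)x$ in place of $x+(\rho(x)-1)x$.

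Next I would substitute this decomposition into the left-hand side of the claimed inequality and rearrange:
\[
\sum_{i\in I} g_i \;+\; \sum_{j\in [1,t]\setminus I} e(g_j)
\;=\; \sum_{i\in I} g_i \;+\; \sum_{j\in [1,t]\setminus I}\Bigl( g_j + (2\rho(g_j)-1) g_j\Bigr)
\;=\; \sum_{i=1}^t g_i \;+\; c,
\]
where $c := \sum_{j\in [1,t]\setminus I}(2\rho(g_j)-1) g_j$. If $I = [1,t]$ the extra sum $c$ is empty and the left-hand side equals $\sum_{i=1}^t g_i$ outright, so $\preceq_{\mathcal{H}}$ follows trivially from the reflexivity clause in the definition of Green's preorder. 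If $I \subsetneq [1,t]$ then $c$ is a nonempty sum in $S$, hence $c\in S$, and the displayed equation shows $\sum_{i\in I} g_i + \sum_{j\notin I} e(g_j) = \sum_{i=1}^t g_i + c$, so by the definition of $\preceq_{\mathcal{H}}$ we obtain the desired relation.

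There is no real obstacle here; the only subtlety is avoiding the degenerate case $\rho(g_j)=1$ in the decomposition, which the $2\rho(g_j)-1$ trick dispatches uniformly. No appeal to the Sch\"utzenberger group or to Lemma~\ref{lemma with idempotent} is needed, although the latter would give an alternative route in the boundary case $I=\emptyset$ by identifying $\sum_{j=1}^t e(g_j)$ with $e\bigl(\sum_{j=1}^t g_j\bigr)$ and then peeling off one copy of $\sum g_j$ from the idempotent.
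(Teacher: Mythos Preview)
Your proof is correct and is essentially identical to the paper's own argument: both use $e(g_j)=e(g_j)+e(g_j)=2\rho(g_j)g_j=g_j+(2\rho(g_j)-1)g_j$ and then collect terms to exhibit $\sum_{i=1}^t g_i$ plus a genuine semigroup element. You are slightly more careful in singling out the boundary case $I=[1,t]$, but otherwise the two proofs coincide line for line.
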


\begin{proof} We see that $\sum\limits_{i\in I} g_i+\sum\limits_{j\in [1,t]\setminus I} e(g_j)=\sum\limits_{i\in I} g_i+\sum\limits_{j\in [1,t]\setminus I} (e(g_j)+e(g_j))=\sum\limits_{i\in I} g_i+\sum\limits_{j\in [1,t]\setminus I}2\rho(g_j) g_j=\sum\limits_{i=1}^t g_i+\sum\limits_{j\in [1,t]\setminus I}(2\rho(g_j)-1) g_j\preceq_{\mathcal{H}} \sum\limits_{i=1}^t g_i$, done.
\end{proof}

\begin{lemma}\label{Lemma all in He} Let $S$ be a finite commutative semigroup, and let $g_1,\ldots,g_{t}$ be elements of $S$ (not necessarily distinct) such that $\langle \sum\limits_{i=1}^{t} g_i\rangle$ is a group. Then $H_e$ is a group with $e$ being the identity element of $H_e$, where $e=\sum\limits_{i=1}^{t} e(g_i)$. Moreover,
$\sum\limits_{i\in I} g_i+\sum\limits_{j\in [1,t]\setminus I} e(g_j)\in H_e$ for each subset $I$ of $[1,t]$.
\end{lemma}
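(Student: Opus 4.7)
The plan is to combine the computation of the unique idempotent of cyclic semigroups (Lemma \ref{lemma with idempotent}) with the characterization of groups among Green's classes (Lemma \ref{Lemma Greens class}) and the inclusion provided by Lemma \ref{Lemma elments of V}.

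First I would show $H_e$ is a group. By iterating Lemma \ref{lemma with idempotent}, the element $e=\sum_{i=1}^t e(g_i)$ satisfies $e=e\bigl(\sum_{i=1}^t g_i\bigr)$, so $e$ is the unique idempotent of the cyclic semigroup $\langle\sum_{i=1}^t g_i\rangle$. Since this cyclic semigroup is assumed to be a group, its unique idempotent $e$ must be its identity; in particular $e$ is an idempotent of $S$ lying in $H_e$. By Lemma \ref{Lemma Greens class}, the Green class $H_e$ is therefore a group with identity $e$.

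Next I would check that $s:=\sum_{i=1}^t g_i$ itself lies in $H_e$. Using the convention $e=s+(2\rho(s)-1)s$ (to avoid issues if $\rho(s)=1$), we obtain $(e)\subseteq(s)$, so $e\preceq_{\mathcal{H}} s$; conversely $s=e+s$ because $e$ is the identity of the group $\langle s\rangle$, giving $s\preceq_{\mathcal{H}} e$. Hence $s\,\mathcal{H}\,e$.

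For the remaining assertion, fix $I\subseteq[1,t]$ and put
\[
x=\sum_{i\in I}g_i+\sum_{j\in[1,t]\setminus I}e(g_j).
\]
Applying Lemma \ref{lemma with idempotent} once more,
\[
e(x)=\sum_{i\in I}e(g_i)+\sum_{j\in[1,t]\setminus I}e(e(g_j))=\sum_{i=1}^t e(g_i)=e,
\]
so $e\in\langle x\rangle$, which as above yields $e\preceq_{\mathcal{H}} x$. On the other hand, Lemma \ref{Lemma elments of V} furnishes $x\preceq_{\mathcal{H}} s$, and combining this with $s\,\mathcal{H}\,e$ gives $x\preceq_{\mathcal{H}} e$. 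Thus $x\,\mathcal{H}\,e$, i.e.\ $x\in H_e$. There is no real obstacle here since all the work is done by the preceding lemmas; the only care required is in the edge case $\rho(g_i)=1$, which is handled by the author's convention of writing $e(y)=y+(2\rho(y)-1)y$ rather than $y+(\rho(y)-1)y$.
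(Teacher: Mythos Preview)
Your proof is correct and follows essentially the same route as the paper's: both use Lemma~\ref{lemma with idempotent} to identify $e=e(\sum g_i)$, Lemma~\ref{Lemma Greens class} to conclude $H_e$ is a group, the group hypothesis on $\langle s\rangle$ to get $s\,\mathcal{H}\,e$, and Lemma~\ref{Lemma elments of V} for $x\preceq_{\mathcal{H}} s$. The only cosmetic difference is that for the inequality $e\preceq_{\mathcal{H}} x$ you compute $e(x)=e$ directly, whereas the paper obtains it by a second application of Lemma~\ref{Lemma elments of V} (to the sequence $(g_i)_{i\in I}\cup(e(g_j))_{j\notin I}$ with empty index set); these are the same observation in different clothing.
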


\begin{proof} By Lemma \ref{Lemma Greens class}, we have that $H_e$ is a group. Since $e$ is an idempotent of $H_e$ and a group has a unique idempotent, it follows that $e$ is the identity element of the group $H_e$. By Lemma \ref{lemma with idempotent}, we see $e=\sum\limits_{i=1}^{t} e(g_i)=e(\sum\limits_{i=1}^{t}g_i)=\rho(\sum\limits_{i=1}^{t}g_i)\sum\limits_{i=1}^{t}g_i\in \langle \sum\limits_{i=1}^{t} g_i\rangle$. Since $\langle \sum\limits_{i=1}^{t} g_i\rangle$ is a group, it follows that $e \mathcal{H} \sum\limits_{i=1}^{t}g_i$. By applying Lemma \ref{Lemma elments of V}, we derive that  $\sum\limits_{i=1}^{t}g_i\succeq _{\mathcal{H}} \sum\limits_{i\in I} g_i+\sum\limits_{j\in [1,t]\setminus I}e(g_j)\succeq _{\mathcal{H}}\sum\limits_{i\in I} e(g_i)+\sum\limits_{j\in [1,t]\setminus I}e(g_j)=\sum\limits_{i=1}^{t}e(g_i)=e$. This implies that $\sum\limits_{i\in I} g_i+\sum\limits_{j\in [1,t]\setminus I}e(g_j) \mathcal{H} e$, i.e., $\sum\limits_{i\in I} g_i+\sum\limits_{j\in [1,t]\setminus I}e(g_j)\in H_e$. \end{proof}

\begin{lemma}\label{Lemma sum of gi} Let $S$ be a finite commutative semigroup, and let $g_1,\ldots,g_{\ell}$ be elements of $S$ (not necessarily distinct).
For any subset $I$ of $[1,t]$, we have $\sum\limits_{i\in I} g_i+\sum\limits_{j\in [1,t]\setminus I} e(g_j)\preceq_{\mathcal{H}} \sum\limits_{i=1}^t g_i$.
\end{lemma}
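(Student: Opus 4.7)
This lemma is essentially a restatement of Lemma \ref{Lemma elments of V} (the hypotheses and conclusion coincide, modulo the indexing typo between $\ell$ and $t$), so the same proof idea applies verbatim. The plan is to reduce the expression $\sum_{i\in I} g_i + \sum_{j\in [1,t]\setminus I} e(g_j)$ to $\sum_{i=1}^t g_i$ plus something in $S$, and then invoke the definition of Green's preorder.

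First, I would exploit the fact that each $e(g_j)$ is an idempotent, so that $e(g_j) = e(g_j) + e(g_j)$, and then use \eqref{equation e(x)=rho(x)x} to rewrite this as $e(g_j) = 2\rho(g_j)\, g_j$. This is precisely the reformulation flagged in the paragraph introducing \eqref{equation e(x)=rho(x)x} (the remark that one may write $e(x) = x + (2\rho(x)-1)x$), designed to avoid issues when $\rho(g_j) = 1$, i.e.\ when $g_j$ is itself idempotent.

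Next, I would substitute this into the sum of interest: for each $j \in [1,t]\setminus I$,
\[
e(g_j) \;=\; g_j + (2\rho(g_j)-1)\, g_j,
\]
so
\[
\sum_{i\in I} g_i + \sum_{j\in [1,t]\setminus I} e(g_j) \;=\; \sum_{i=1}^{t} g_i + \sum_{j\in [1,t]\setminus I} (2\rho(g_j)-1)\, g_j.
\]
Since $2\rho(g_j) - 1 \geq 1$ for every $j$, the second summand on the right is a genuine element $c \in S$ (it is a well-defined nonempty sum in $S$ when $[1,t]\setminus I \neq \emptyset$; if $[1,t]\setminus I = \emptyset$ then the claim is trivial because equality holds). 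Hence the left-hand side equals $\sum_{i=1}^{t} g_i + c$, which by definition of $\preceq_{\mathcal{H}}$ gives $\sum_{i\in I} g_i + \sum_{j\in [1,t]\setminus I} e(g_j) \preceq_{\mathcal{H}} \sum_{i=1}^{t} g_i$, as required.

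There is no real obstacle; the only subtle point is the need to write $e(g_j) = 2\rho(g_j)\, g_j$ rather than $\rho(g_j)\, g_j$, so that the reduction $e(g_j) = g_j + (2\rho(g_j)-1)\, g_j$ remains a legitimate sum inside the semigroup $S$ even when $g_j$ is already idempotent (the case $\rho(g_j) - 1 = 0$). Once this convention is in place, the computation is a one-line manipulation.
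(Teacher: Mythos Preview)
Your proposal is correct and matches the paper's approach exactly: the paper gives no separate proof for Lemma~\ref{Lemma sum of gi} (it is a verbatim duplicate of Lemma~\ref{Lemma elments of V}), and your argument reproduces the one-line computation used there, including the $e(g_j)=2\rho(g_j)g_j$ rewriting to handle the idempotent case.
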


\begin{lemma}\label{Lemma homomorphism semigrouringtogroupring}  Let $S$ be a commutative semigroup, and let $R$ be a commutative unitary ring.  Let $a\in S$ such that ${\rm St}(H_a)\neq \emptyset$.  Then the semigroup homomorphism $\pi_a:{\rm St}(H_a)\rightarrow \Gamma(H_a)$ given in \eqref{equation homo to shuzen}
can be extended to an epimorphism, denoted $\widetilde{\pi}_a$, of the semigroup ring $R[X;{\rm St}(H_a)]$ onto the group ring $R[X;\Gamma(H_a)]$ as  $\widetilde{\pi}_a: \sum\limits_{g\in G} c_g X^g\mapsto \sum\limits_{g\in G} c_g X^{\gamma_g}$
where $\sum\limits_{g\in G} c_g X^g\in K[X;{\rm St}(H_b)]$. Moreover, for any $g\in {\rm St}(H_a)$, then $(X^{g}-X^{e(g)})\in R[X;{\rm St}(H_a)]$ and $\widetilde{\pi}_a((X^{g}-X^{e(g)}))=(X^{\gamma_g}-X^{0_{\Gamma(H_a)}})\in R[X;\Gamma(H_b)]$.
\end{lemma}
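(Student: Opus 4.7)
The plan is to verify this by appealing to the standard functoriality of the semigroup ring construction: any semigroup homomorphism $\varphi : S \to T$ between commutative semigroups extends uniquely to an $R$-algebra homomorphism $R[X;S] \to R[X;T]$ by $R$-linearly extending $X^s \mapsto X^{\varphi(s)}$. So first I would set $\widetilde{\pi}_a(\sum_g c_g X^g) = \sum_g c_g X^{\gamma_g}$ and verify it is a ring homomorphism. $R$-linearity is immediate from the definition; multiplicativity reduces (by $R$-bilinearity of both products) to checking $X^{g_1} \cdot X^{g_2} \mapsto X^{\gamma_{g_1}} \cdot X^{\gamma_{g_2}}$, which is exactly the statement that $\pi_a(g_1 + g_2) = \gamma_{g_1 + g_2} = \gamma_{g_1} + \gamma_{g_2} = \pi_a(g_1) + \pi_a(g_2)$, i.e.~equation \eqref{equation operation}. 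Surjectivity of $\widetilde{\pi}_a$ follows at once from surjectivity of $\pi_a : {\rm St}(H_a) \twoheadrightarrow \Gamma(H_a)$ (guaranteed by Lemma~\ref{Lemma Schuzebure}), since every basis element $X^{\gamma}$ of $R[X;\Gamma(H_a)]$ is the image of $X^g$ for any $g \in \pi_a^{-1}(\gamma)$.

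Next I would justify that the expression $X^g - X^{e(g)}$ actually lies in $R[X;{\rm St}(H_a)]$ for every $g \in {\rm St}(H_a)$. Since ${\rm St}(H_a)$ is a subsemigroup of $S$, the cyclic subsemigroup $\langle g \rangle$ is contained in ${\rm St}(H_a)$, and in particular $e(g) = \rho(g)\, g \in \langle g \rangle \subseteq {\rm St}(H_a)$ by \eqref{equation e(x)=rho(x)x}. Hence both basis exponents $g$ and $e(g)$ belong to ${\rm St}(H_a)$, so the element $X^g - X^{e(g)}$ is a well-defined element of $R[X;{\rm St}(H_a)]$.

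Finally I would apply $\widetilde{\pi}_a$ to get $\widetilde{\pi}_a(X^g - X^{e(g)}) = X^{\gamma_g} - X^{\gamma_{e(g)}}$, and then identify $\gamma_{e(g)}$ with the identity $0_{\Gamma(H_a)}$. For this, note that $e(g) + e(g) = e(g)$, so applying the homomorphism \eqref{equation operation} gives $\gamma_{e(g)} + \gamma_{e(g)} = \gamma_{e(g)}$; that is, $\gamma_{e(g)}$ is an idempotent of the group $\Gamma(H_a)$. Since a group has a unique idempotent, namely its identity, we conclude $\gamma_{e(g)} = 0_{\Gamma(H_a)}$, and hence $\widetilde{\pi}_a(X^g - X^{e(g)}) = X^{\gamma_g} - X^{0_{\Gamma(H_a)}}$ as required.

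The argument is essentially routine once one has the functoriality of semigroup rings in hand; I don't foresee a serious obstacle. The only subtlety worth flagging is the verification $e(g) \in {\rm St}(H_a)$ (so that the displayed element is in the correct ring) and the identification $\gamma_{e(g)} = 0_{\Gamma(H_a)}$, both of which follow from elementary semigroup facts already recorded in the preceding lemmas.
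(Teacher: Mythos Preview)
Your proposal is correct and follows essentially the same route as the paper's proof: verify that $\widetilde{\pi}_a$ is a ring epimorphism via \eqref{equation operation} and \eqref{equation homo to shuzen}, observe $e(g)=\rho(g)g\in{\rm St}(H_a)$ since ${\rm St}(H_a)$ is a subsemigroup, and deduce $\gamma_{e(g)}=0_{\Gamma(H_a)}$ from the fact that $\gamma_{e(g)}$ is idempotent in a group. The only difference is that you spell out the functoriality and surjectivity arguments in slightly more detail than the paper, which simply asserts that ``we can check'' these facts.
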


\begin{proof}  By \eqref{equation operation} and \eqref{equation homo to shuzen}, we can check that $\widetilde{\pi}_s$ is an epimorphism of  $R[X;{\rm St}(H_a)]$ onto $R[X;\Gamma(H_a)]$. Moreover, since $R$ is unitary ring, the $\widetilde{\pi}_s$ is an extension of $\pi_s$. Since ${\rm St}(H_a)$ is a subsemigroup and $g\in {\rm St}(H_a)$, it follows that $e(g)=\rho(g)g\in {\rm St}(H_a)$. Moreover, since $\gamma_{e(g)}+\gamma_{e(g)}=\gamma_{e(g)+e(g)}=\gamma_{e(g)}$, it follows that $\gamma_{e(g)}=0_{\Gamma(H_a)}$, then the conclusion follows readily.
\end{proof}

\begin{lemma}\label{Lemma archimedean semigroup at most} [Grillet, P72. Proposition 1.3] An archimedean semigroup contains at most one idempotent.
\end{lemma}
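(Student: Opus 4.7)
The plan is to take two idempotents $e_1, e_2 \in S$ and apply the archimedean hypothesis in both directions to force $e_1 = e_2$. Specifically, I would first use the definition of archimedean semigroup with $a = e_1$ and $b = e_2$: there exist some $n > 0$ and $t \in S$ with $n e_1 = t + e_2$. Since $e_1 + e_1 = e_1$, an easy induction on $n$ gives $n e_1 = e_1$, hence $e_1 = t + e_2$.

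Next I would compute $e_1 + e_2$: adding $e_2$ on the right to the equation $e_1 = t + e_2$, and using $e_2 + e_2 = e_2$, yields
\[
e_1 + e_2 = t + e_2 + e_2 = t + e_2 = e_1.
\]
By symmetry, applying the archimedean condition with the roles of $e_1$ and $e_2$ swapped gives $e_2 = s + e_1$ for some $s \in S$ and $m > 0$, and the same computation yields $e_1 + e_2 = e_2$. Combining both identities,
\[
e_1 = e_1 + e_2 = e_2,
\]
which is the desired conclusion.

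I do not anticipate a real obstacle here; the proof is essentially a two-line manipulation. The only subtlety is making sure that commutativity is used implicitly (so that $e_1 + e_2 = e_2 + e_1$) and that the idempotent property $ne = e$ for all $n \geq 1$ is flagged, since the archimedean hypothesis only guarantees some positive multiple, not the first one.
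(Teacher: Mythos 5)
Your argument is correct: using $ne_1=e_1$ for an idempotent, the archimedean condition in both directions gives $e_1+e_2=e_1$ and $e_2+e_1=e_2$, whence commutativity forces $e_1=e_2$. The paper does not prove this lemma but simply cites Grillet's monograph, and your proof is the standard elementary argument for that cited fact (valid in the commutative setting, which is the only one the paper's definition of archimedean covers).
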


The following lemma are known in commutative semigroup theory without citations directly. For the reader convenience, we give a short proof below.

\begin{lemma}\label{Lemma f.c.s is archi iff one idemptoent} Let $S$ be a finite commutative semigroup. Then $S$ is archimedean if and only if $S$ has a unique idempotent.
\end{lemma}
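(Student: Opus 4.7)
The plan is to handle the two directions separately, with the forward direction being an immediate consequence of Lemma \ref{Lemma archimedean semigroup at most} together with the standard fact that every finite commutative semigroup contains at least one idempotent. Specifically, for any $a \in S$, the cyclic subsemigroup $\langle a\rangle$ is finite (since $S$ is), hence contains its idempotent $e(a)$. Combined with Lemma \ref{Lemma archimedean semigroup at most} (at most one idempotent in an archimedean semigroup), this gives exactly one idempotent in $S$.

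For the reverse direction, I would assume $S$ has a unique idempotent $e$ and show directly that the archimedean condition holds, i.e., that for every $a,b \in S$ there exist $n > 0$ and $t \in S$ with $na = t + b$. The key observation is that the uniqueness of the idempotent forces $e(a) = e$ for every $a \in S$, because each $e(a)$ is itself an idempotent. I would then exploit this by choosing $n = \rho(a)$ so that $na = e$, and separately produce a $t$ with $t + b = e$.

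To produce $t$, I would split on whether $\rho(b) \geq 2$ or $\rho(b) = 1$. In the first case, $t = (\rho(b) - 1)b$ lies in $S$ and satisfies $t + b = \rho(b) b = e(b) = e = na$. In the degenerate case $\rho(b) = 1$, we have $b = e$, and then choosing $t = e \in S$ gives $t + b = e + e = e = na$. In both cases the archimedean identity is verified, completing the argument.

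The main obstacle, such as it is, is handling the edge case where $b$ is itself the unique idempotent (so $\rho(b) = 1$) — in that situation the formula $(\rho(b)-1)b$ degenerates to a non-element, and one must instead use the idempotence of $e$ to find a valid $t$. Beyond this bookkeeping, the proof is essentially a direct computation from the definitions of $\rho$, $e(\cdot)$, and the archimedean condition, relying only on \eqref{equation e(x)=rho(x)x} and the uniqueness hypothesis.
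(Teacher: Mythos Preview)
Your proof is correct and follows essentially the same route as the paper's: both directions invoke Lemma \ref{Lemma archimedean semigroup at most} plus existence of an idempotent for the forward implication, and verify the archimedean condition directly via $\rho(a)a = e = e(b)$ for the converse. The only cosmetic difference is that the paper avoids your case split on $\rho(b)$ by writing $e = 2\rho(b)b = b + (2\rho(b)-1)b$, so that the coefficient $2\rho(b)-1 \geq 1$ is always positive (a trick the paper flags explicitly in Section 2).
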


\begin{proof} We first show the sufficiency. Denote $e$ to be the unique idempotent of $S$. Take any two elements $a,b$ of $S$. We see that $\rho(a)a=e=e+e=2\rho(b)b=b+(2\rho(b)-1)b$, and thus, $S$ is Archimedean.
Hence, it remains to prove the necessity.
Since a finite semigroup contains at least one idempotent, then the conclusion follows from Lemma \ref{Lemma archimedean semigroup at most}.
\end{proof}

\begin{lemma}\label{Lemma clifford semigroup} Let $S$ be a commutative semigroup.  Then $S$ is a clifford semigroup if and only if every element of $S$ is contained in some subgroup of $S$.
\end{lemma}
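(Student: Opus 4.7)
The plan is to exploit Lemma \ref{Lemma maximal subgroup}, which identifies the subgroups of $S$ with the Green $\mathcal{H}$-classes containing an idempotent. Thus ``every element lies in a subgroup'' translates into saying that every $a \in S$ is $\mathcal{H}$-related to some idempotent, and the task reduces to matching this property with the regularity axiom.

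For the forward direction, suppose $S$ is a Clifford semigroup. Given $a \in S$, fix a commutative ``inverse'' $a'$ satisfying $a + a' + a = a$ and $a' + a + a' = a'$, and set $e := a + a'$. The computation $e + e = (a + a' + a) + a' = a + a' = e$ shows $e$ is idempotent. The relation $a = e + a$ gives $a \preceq_{\mathcal{H}} e$, and $e = a + a'$ together with the convention $(a) = \{a\} \cup \{a + c : c \in S\}$ gives $e \preceq_{\mathcal{H}} a$; hence $a \, \mathcal{H} \, e$, so $a \in H_e$, and $H_e$ is a subgroup by Lemma \ref{Lemma maximal subgroup}.

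For the backward direction, suppose every $a \in S$ is contained in some subgroup $H$, with identity $f$ and group-inverse $a^{-1}$. Taking $a' := a^{-1}$ one verifies directly
\[
a + a' + a = f + a = a, \qquad a' + a + a' = f + a^{-1} = a^{-1} = a',
\]
so each $a$ has a commutative inverse, and $S$ is regular, hence Clifford.

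I do not expect any substantive obstacle: both directions are short manipulations. The only points requiring care are adhering to the additive notation of the paper, using the convention that the principal ideal $(a)$ contains $a$ itself (so that $e = a + a'$ genuinely lies in $(a)$), and keeping the local identity $f$ of the subgroup $H$ distinct from any hypothetical global identity of $S$ (which need not exist).
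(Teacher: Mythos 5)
Your proof is correct and follows essentially the same route as the paper: the forward direction manufactures the idempotent $e=a+a'$ from a regular inverse and checks $a\,\mathcal{H}\,e$ so that $a$ lies in the group $H_e$, and the backward direction verifies regularity directly from the group inverse. The only cosmetic difference is that you invoke Lemma \ref{Lemma maximal subgroup} where the paper cites Lemma \ref{Lemma Greens class}; both serve the same purpose here.
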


\begin{proof} The sufficiency follows trivially. To prove the necessity, we take an element $a$ of $S$. By Lemma \ref{Lemma Greens class}, it suffices to prove that $a\in H_e$ for some idempotent.
By definition of commutative clifford semigroups,  there exists $a'\in S$ such that $aa'a=a$ and $a'aa'=a'$. Then $(aa')(aa')=(aa'a)a'=aa'$ and so $aa'$ is an idempotent of $S$. Denote $e=aa'$. Then $e\preceq_{\mathcal{H}} a$. On the other hand, $a=a(a'a)=ae$, and $a\preceq_{\mathcal{H}} e$. Hence, $a\mathcal{H} e$ and $a\in H_e$, done.
\end{proof}

\noindent {\bf Proof of Theorem \ref{Theorem General bound for finite commutative semigroup}.} \
 Since ${\rm D}(S\diagup\mathcal{H})<\infty$, it follows from
Lemma \ref{Lemma relative and DAvenport} that $${\rm D}_{\bar{a}}(S\diagup\mathcal{H})<\infty \mbox{ for each } a\in S.$$
Suppose $d(\Gamma(H_a),R)<\infty$ for all $a\in S$.
Let
\begin{equation}\label{equation ell=1+max}
\ell=1+\max\limits_{a\in S}\left\{\epsilon_a \ {\rm D}_{\bar{a}}(S\diagup\mathcal{H})+d(\Gamma(H_a),K)\right\}.
 \end{equation}
 Let $T=s_1\cdot\ldots \cdot s_{\ell}$ be an arbitrary sequence over $S$ of length $\ell$.

\noindent \textbf{Claim A.} There exist $c_1,\ldots,c_{\ell}\in K^{\times}$ such that $\prod\limits_{i=1}^{\ell}(X^{s_i}-c_i X^{e(s_i)})=0$.

{\sl Proof of Claim A}.
Denote $$s=\sum\limits_{i=1}^{\ell} s_i.$$
 We first consider the case that $\langle s\rangle$ is a not group, i.e.,
\begin{equation}\label{equation epsilon s=1}
\epsilon_s=1.
\end{equation}
By the definition of ${\rm D}_{\bar{s}}(S\diagup\mathcal{H})$, we see that there exists exist distinct integers $i_1,i_2,\ldots,i_t$ of $[1,\ell]$ with
\begin{equation}\label{equation t leq D}
t\leq {\rm D}_{\bar{s}}(S\diagup\mathcal{H})
\end{equation}
 such that $(\sum\limits_{j=1}^t  \bar{s}_{i_j})=\bar{s}$, i.e., $(\sum\limits_{j=1}^t  s_{i_j})\ \mathcal{H} \ s$. Say $\{i_1,i_2,\ldots,i_t\}=\{1,2,\ldots,t\}$. Then
\begin{equation}\label{equation sum to t H s}
(\sum\limits_{j=1}^t  s_j)\ \mathcal{H} \ s.
 \end{equation}
It follows from \eqref{equation ell=1+max},  \eqref{equation epsilon s=1} and  \eqref{equation t leq D} that
\begin{equation}\label{equation ell-t>0}
\begin{array}{llll}
\ell-t &\geq& 1+\max\limits_{a\in S}\left\{\epsilon_a \ {\rm D}_{\bar{a}}(S\diagup\mathcal{H})+d(\Gamma(H_a),K)\right\}-{\rm D}_{\bar{s}}(S\diagup\mathcal{H})\\
&\geq & 1+\epsilon_s \ {\rm D}_{\bar{s}}(S\diagup\mathcal{H})+d(\Gamma(H_s),K)-{\rm D}_{\bar{s}}(S\diagup\mathcal{H})\\
&=& 1+d(\Gamma(H_a),K)\\
&>&0.\\
\end{array}
\end{equation}
For any $i\in [t+1,\ell]$, since $s \preceq_{\mathcal{H}} s_i+(\sum\limits_{j=1}^t  s_j) \preceq_{\mathcal{H}} \sum\limits_{j=1}^t  s_j \mathcal{H} s$, it follows that \eqref{equation sum to t H s} that
$(s_i+ s) \mathcal{H} s.$
which implies that
 \begin{equation}\label{equation si+s H s}
s_j\in {\rm St}(H_s)
\end{equation}
 and so $$\Gamma(H_s)\neq \emptyset.$$
Denote $N={\rm St}(H_s)$ and $\Gamma=\Gamma(H_s).$
 Let $R[X;N]$ be the semigroup ring of the semigroup $N$ over $R$, and let $R[X;\Gamma]$ be the group ring of the group $\Gamma$ over $R$. By \eqref{equation operation} and \eqref{equation homo to shuzen}, we can check that $\pi_s$ can be extended to an ring-homomorphism, denoted $\widetilde{\pi}_s$, of $R[X;N]$ to $R[X;\Gamma]$, given as
 \begin{equation}\label{equation tilde pi}
 \widetilde{\pi}_s: \sum\limits_{g\in G} a_g X^g\mapsto \sum\limits_{g\in G} a_g X^{\gamma_g}
  \end{equation}
  where $\sum\limits_{g\in G} a_g X^g\in R[X;N]$.
 Let
  \begin{equation}\label{equation V}
 V=\{b\in S: b\preceq_{\mathcal{H}} s\}.
   \end{equation}
 Then $V$ is a nonempty subsemigroup of $S$.
 We see that
 \begin{equation}\label{equation a+b in V}
 a+b\in V \mbox{ for any } a\in  N \mbox{ and any } b\in V.
 \end{equation}
  Let $R[X;V]$ be the semigroup ring of the semigroup $V$ over $R$. We shall view $R[X;V]$ as a free $R$-module in the canonical way with a basis $\{X^{v}: v\in V\}$. By \eqref{equation a+b in V}, we can show that the following map $$\varphi: R[X;N] \rightarrow {\rm End}_R(R[X;V])$$ is a representation of the ring $R[X;N]$ on the module $R[X;V]$:
  \begin{equation}\label{equation represen1}
 \varphi(\sum\limits_{g\in N} a_g X^g): \sum\limits_{v\in V} b_v X^v\mapsto (\sum\limits_{g\in N} a_g X^g)(\sum\limits_{v\in V} b_v  X^v)=\sum\limits_{g\in N}\sum\limits_{v\in V}a_g b_v X^{g+v}\in R[X;V],
 \end{equation}
  where $\sum\limits_{g\in N} a_g X^g\in R[X;N]$
and  $\sum\limits_{v\in V} b_v  X^v\in R[X;V]$. By Lemma \ref{Lemma group act on V}, the group $\Gamma$ acts on the set $V$. Hence, we can show the following map $$\psi:K[X;\Gamma] \rightarrow {\rm End}_R(R[X;V])$$ is a representation
 of the ring $R[X;\Gamma]$ on the the module $R[X;V]$:
   \begin{equation}\label{equation represen2}
 \psi(\sum\limits_{\gamma\in \Gamma} a_{\gamma} X^{\gamma}):\sum\limits_{v\in V} b_vX^v\mapsto \sum\limits_{\gamma\in \Gamma}\sum\limits_{v\in V} a_{\gamma} b_v X^{\gamma\circ v},
   \end{equation}
  where $\sum\limits_{\gamma\in \Gamma} a_{\gamma} X^{\gamma}\in R[X;\Gamma]$
and $\sum\limits_{v\in V} b_v  X^v\in R[X;V]$.
By \eqref{equation gammac}, \eqref{equation tilde pi}, \eqref{equation represen1} and \eqref{equation represen2}, we can show that
\begin{equation}\label{equation composite}
\varphi=\psi\widetilde{\pi}_s,
\end{equation} i.e., the representation $\varphi$ is a lift of the representation $\psi$ by the ring homomorphism $\widetilde{\pi}_s$. Recall \eqref{equation ell-t>0},
$\ell-t\geq 1+d(\Gamma(H_s),R)$, it follows from the definition of $d(\Gamma(H_s),R)$ that there exist
 $c_{t+1},\ldots,c_{\ell}\in K^{\times}$ such that $$\prod\limits_{i=t+1}^{\ell}(X^{\gamma_{s_i}}-c_i X^{0_\Gamma})=0\in R[X;\Gamma].$$ By \eqref{equation sum to t H s} and Lemma \ref{Lemma elments of V}, we derive that
$(\prod\limits_{j=1}^{t}(X^{s_j}- X^{e(s_j)})\in R[X;V]$. By \eqref{equation represen1}, \eqref{equation represen2} and \eqref{equation composite}, we conclude that
$$\begin{array}{llll}
& &\prod\limits_{i=t+1}^{\ell}(X^{s_i}-c_i X^{e(s_i)})
\prod\limits_{j=1}^{t}(X^{s_j}- X^{e(s_j)})\\
 &=& \varphi(\prod\limits_{i=t+1}^{\ell}(X^{s_i}-c_i X^{e(s_i)}))(\prod\limits_{j=1}^{t}(X^{s_j}- X^{e(s_j)}))\\
&=& \psi\widetilde{\pi}_s(\prod\limits_{i=t+1}^{\ell}(X^{s_i}-c_i X^{e(s_i)}))(\prod\limits_{j=1}^{t}(X^{s_j}- X^{0_\Gamma}))\\
&=& \psi(\prod\limits_{i=t+1}^{\ell}(X^{\gamma_{s_i}}-c_i X^{0_\Gamma}))(\prod\limits_{j=1}^{t}(X^{s_j}- X^{e(s_j)}))\\
&=&0_{K[X;\Gamma]}(\prod\limits_{j=1}^{t}(X^{s_j}- X^{e(s_j)}))=0_{\in R[X;V]}=0_{R[X;S]}.\\
\end{array}$$

Now we consider the case that $\langle s\rangle$ is a group, i.e.,  $\epsilon_s=0$.
Denote $e=\sum\limits_{i=1}^{\ell} e(s_i)$. Let $s_i'=e+s_i$ for each $i\in [1,\ell]$.
Then $s_i'\preceq_{\mathcal{H}} e$ where $i\in [1,\ell]$. We see that
$e=e+e=e+\sum\limits_{i=1}^{\ell} e(s_i)=(e+s_i)+(2\rho(s_i)-1)s_i+\sum\limits_{j\in [1,\ell]\setminus \{i\}}e(s_j) \preceq_{\mathcal{H}} e+s_i=s_i'$, which implies
\begin{equation}\label{equation here si'inHe}
s_i'\in H_e \mbox{ for each } i\in [1,\ell].
\end{equation} By Lemma \ref{Lemma Schuzebure}, we have $d(\Gamma(H_s),K)=d(H_s,K)$.
Since $\ell=1+\max\limits_{a\in S}\left\{\epsilon_a \ {\rm D}_{\bar{a}}(S\diagup\mathcal{H})+d(\Gamma(H_a),K)\right\}\geq 1+d(\Gamma(H_s),K)=1+d(H_s,K)$, it follows from \eqref{equation here si'inHe}  and Lemma \ref{Lemma all in He} that
that there exist $c_1,\ldots,c_{\ell}\in R^{\times}$ such that $(X^{s_1'}-c_1X^{e})\cdot\ldots\cdot (X^{s_{\ell}'}-c_{\ell}X^{e})=0$, and thus,
$$\begin{array}{llll}
& &(X^{s_1}-c_1X^{e(s_1)})\cdots(X^{s_{\ell}}-c_{\ell}X^{e(s_\ell)})\\
&=&\sum\limits_{I\subsetneq [1,\ell]}(-1)^{|I|}(\prod\limits_{i\in I} c_i)
X^{\sum\limits_{j\in [1,\ell]\setminus I}s_j+\sum\limits_{i\in I} e(s_i)}+(-1)^{\ell}(\prod\limits_{i=1}^{\ell} c_i) X^{\sum\limits_{i=1}^{\ell} e(s_i)}\\
&=&\sum\limits_{I\subsetneq [1,\ell]}(-1)^{|I|}(\prod\limits_{i\in I} c_i)
X^{e+\sum\limits_{j\in [1,\ell]\setminus I}s_j+\sum\limits_{i\in I} e(s_i)}+(-1)^{\ell}(\prod\limits_{i=1}^{\ell} c_i) X^{e}\\
&=&\sum\limits_{I\subsetneq [1,\ell]}(-1)^{|I|}(\prod\limits_{i\in I} c_i)
X^{e+\sum\limits_{j\in [1,\ell]\setminus I}s_j}+(-1)^{\ell}(\prod\limits_{i=1}^{\ell} c_i) X^{e}\\
&=&\sum\limits_{I\subsetneq [1,\ell]}(-1)^{|I|}(\prod\limits_{i\in I} c_i)
X^{\sum\limits_{j\in [1,\ell]\setminus I}(e+s_j)}+(-1)^{\ell}(\prod\limits_{i=1}^{\ell} c_i) X^{e}\\
&=&\sum\limits_{I\subsetneq [1,\ell]}(-1)^{|I|}(\prod\limits_{i\in I} c_i)
X^{\sum\limits_{j\in [1,\ell]\setminus I}s_j'}+(-1)^{\ell}(\prod\limits_{i=1}^{\ell} c_i) X^{e}\\
&=&(X^{s_1'}-c_1X^{e})\cdot\ldots\cdot (X^{s_{\ell}'}-c_{\ell}X^{e})=0.\\
\end{array}$$
Therefore, we proves Claim A. \qed

By the arbitrariness of choosing the sequence $T$ and Claim A, we prove that if $d(\Gamma(H_a),R)<\infty$ for each $a\in S$, then $d(S,R)<\infty$ and $d(S,R)\leq\max\limits_{a\in S}\left\{\epsilon_a \ {\rm D_{\bar{a}}}(S\diagup \mathcal{H}) +d(\Gamma(H_a),R)\right\}$.

It remains to show that $d(S,R)<\infty$ implies that $d(\Gamma(H_a),R)<\infty$ for each $a\in S$ and that $d(S,R)\geq \max\limits_{a\in S}\left\{d(\Gamma(H_a),R)\right\}$. We take an arbitrary element $b\in S$ such that
$\Gamma(H_b)\neq \emptyset$, i.e., ${\rm St}(H_b)\neq \emptyset$, and take an arbitrary sequence $\gamma_1\cdot\gamma_2\cdot\ldots\cdot\gamma_m$ over the group $\Gamma(H_b)$ which is algebraically zero-sum free sequence (over $R$), i.e.,
$\prod\limits_{i=1}^m(X^{\gamma_i}-c_i X^{0_{\Gamma}})\neq 0\in R[X;\Gamma(H_b)]$ for all $c_1,\ldots,c_{m}\in R\setminus \{0\}$.
Since $\pi_b$ is an epimorphism of ${\rm St}(H_b)$
onto $\Gamma(H_b)$, it follows that there exist $g_1,g_2,\ldots,g_m\in {\rm St}(H_a)$ such that $\pi_b(g_i)=\gamma_i$ for each $i\in [1,m]$. Then by applying Lemma \ref{Lemma homomorphism semigrouringtogroupring},
we have that $\widetilde{\pi}_b(\prod\limits_{i=1}^m (X^{g_i}-c_i X_{e(g_i)}))=\prod\limits_{i=1}^m (X^{\gamma_i}-c_i X_{0})\neq 0$ for all $c_1,\ldots,c_{m}\in R\setminus \{0\}$, and so $\prod\limits_{i=1}^m (X^{g_i}-c_i X_{e(g_i)}\neq 0$. This implies that $d(S,R)\geq m$. By the arbitrariness of choosing $b$ and choosing the algebraically zero-sum free sequence $\gamma_1\cdot\gamma_2\cdot\ldots\cdot\gamma_m$, we have the conclusion proved. The lower bound and the upper bound will be attained which can be seen in Theorem \ref{Theorem for finite}, Corollary \ref{Corollary Clifford} and Corollary \ref{Corollary Archimedean}. \qed

\noindent {\bf Proof of Theorem \ref{Theorem for finite}.}  As noted in the remark, ${\rm D}(S\diagup \mathcal{H})<\infty$. For each $a\in S$ with $\Gamma(H_a)\neq \emptyset$, by Lemma \ref{Lemma splitting field still}, we see that Condition (i) implies that $K$ is a splitting field of $\Gamma(H_a)$, and Condition (ii) implies that $\exp(\Gamma(H_a))$ is also a $p$-power. Then it follows from Theorem C (ii) and (iii) that $d(\Gamma(H_a),K)<\infty$ for any $a \in S$. By Theorem \ref{Theorem General bound for finite commutative semigroup}, we have that $d(S,K)<\infty$ and $$\max\limits_{a\in S}\left\{d(\Gamma(H_a),K)\right\} \leq d(S,K)\leq\max\limits_{a\in S}\left\{\epsilon_a \ {\rm D_{\bar{a}}}(S\diagup \mathcal{H}) +d(\Gamma(H_a),K)\right\}.$$ By Lemma \ref{Lemma relative and quotient}, we have ${\rm D_{\bar{a}}}(S\diagup \mathcal{H})\leq \psi(a)+1$ for any $a\in S$, and the theorem is proved. \qed

\noindent {\bf Proof of Corollary \ref{Corollary Clifford}.} By Lemma \ref{Lemma clifford semigroup}, we see $\langle a\rangle$ is a subgroup of $S$
and so
 $\epsilon_a=0$. By Theorem \ref{Theorem for finite}, we have that $d(S,K)<\infty$ and $d(S,K)=\max\limits_{a\in S}\left\{d(\Gamma(H_a),K)\right\}$.  Let $e$ be the identity of the group $\langle a\rangle$. Since $a \mathcal{H} e$, it follows from Lemma \ref{Lemma Greens class} that $H_a=H_e$ is a subgroup of $S$.   By Lemma \ref{Lemma Schuzebure}, $\Gamma(H_a)\cong H_a$. Combined with Lemma \ref{Lemma maximal subgroup}, we have the conclusion proved. \qed

\noindent {\bf Proof of Corollary \ref{Corollary Archimedean}.}

(i). We first show that
\begin{equation}\label{equation one point}
\max\limits_{a\in S}\left\{\epsilon_a \ (\psi(a)+1)+d(\Gamma(H_a),K)\right\}=\max\left(\Psi(S)+1,d(G_S,K)\right).
\end{equation}
By Lemma \ref{Lemma f.c.s is archi iff one idemptoent}, we denote $e$ to be the unique idempotent of $S$.  Note that
\begin{equation}\label{equation e leq every a}
e\preceq_{\mathcal{H}} a \mbox{ for every } a\in S.
\end{equation}
 This is because that $e=ne=a+t$ for some $n>0$ and $t\in S$ by the definition for archimedean semigroups.
 It follows from \eqref{equation e leq every a} that
 \begin{equation}\label{equation max psi a=psi S-1}
 \max\limits_{x\in S\setminus H_e}\{\psi(x)\}=\Psi(S)-1.
 \end{equation}
 By Lemma \ref{Lemma Greens class}, we see that $H_e$ is the largest subgroup of $S$ consisting of all element $a$ such that $\langle a\rangle$ is a group. That is,
  \begin{equation}\label{equation epsa=0or1}
 \begin{array}{llll}\epsilon_a=\left\{\begin{array}{llll}
               0,  & \mbox{if \ \ } {a\in H_e};\\
               1,  &  otherwise.\
              \end{array}
              \right.
\end{array}
\end{equation}
By \eqref{equation max psi a=psi S-1}, we conclude that  \begin{equation}\label{equation He=e+x}
H_e=\{e+x: x\in S\}.
\end{equation}
Then we have the following.

\noindent \textbf{Claim B}. If $b\in S\setminus H_e$, then ${\rm St}(H_b)=\emptyset$ and so $d(\Gamma(H_a),K)=0$.

{Proof of Claim B.} To prove ${\rm St}(H_b)=\emptyset$, we suppose to the contrary that there exists some $b\in S\setminus H_e$ such that ${\rm St}(H_b)\neq \emptyset$.
Take $d\in {\rm St}(H_b)$. Then $e+b=\rho(d)d+b\in H_b$, which implies $e+b\notin H_e$, a contradiction with \eqref{equation He=e+x}. This proves that ${\rm St}(H_b)=\emptyset$. It follows that $\Gamma(H_a)=\emptyset$ and so $d(\Gamma(H_a),K)=0$. \qed

By \eqref{equation max psi a=psi S-1}, \eqref{equation epsa=0or1},  Claim A and Lemma \ref{Lemma Schuzebure}, we derive that
\begin{equation}\label{equation one point}\begin{array}{llll}
& &\max\limits_{a\in S}\left\{\epsilon_a \ (\psi(a)+1)+d(\Gamma(H_a),K)\right\}\\
&=&\max\left( \max\limits_{a\in S\setminus H_e}\{\epsilon_a \ (\psi(a)+1)+d(\Gamma(H_a),K)\},\max\limits_{a\in H_e}\{\epsilon_a \ (\psi(a)+1)+d(\Gamma(H_a),K)\}\right)
\\
&=&\max\left(\max\limits_{a\in S\setminus H_e}\{\psi(a)+1\},\max\limits_{a\in H_e}\{d(\Gamma(H_a),K)\}\right)\\
&=&\max\left( \Psi(S), d(H_e,K)\right)\\
\end{array}
\end{equation}
By Theorem \ref{Theorem General bound for finite commutative semigroup}, we see that $d(S,K)\leq\max\limits_{a\in S}\left\{\epsilon_a \ (\psi(a)+1)+d(\Gamma(H_a),K)\right\}$. By \eqref{equation one point}, to prove Conclusion (i), it suffices to show that $d(S,K)\geq\max\left( \Psi(S), d(H_e, K)\right)$.

If $d(H_e,K)\geq \Psi(S)$, since $H_e$ is a subgroup of $S$, it follows from Lemma \ref{Lemma subsemigroup} that $d(S;K)\geq d(H_e,K)=\max\left( \Psi(S), d(H_e,K)\right)$, and we are done.  Hence, we assume $$\Psi(S)>d(H_e,K).$$ By \eqref{equation e leq every a} and the definition for $\Psi(S)$, we can take elements $g_1,\ldots,g_{t}\in S$ such that
$$g_1\succ_{\mathcal{H}}(g_1+g_2)\succ_{\mathcal{H}}\cdots \succ_{\mathcal{H}} \sum\limits_{i=1}^{t}g_{i} \succ_{\mathcal{H}} e,$$ where $t=\Psi(S)$.
Take arbitrary $a_1,\ldots,a_t\in K\setminus \{0\}$. Recall that $e(g_i)=e$ since $e$ is the unique idempotent of $S$.
Let $f=\prod\limits_{i=1}^{t}(X^{g_i}-a_i X^e).$
 We see that $f=\prod\limits_{i=1}^{t}(X^{g_i}-a_i X^e)=X^{\sum\limits_{i=1}^t g_i}+\sum\limits_{\emptyset\neq J\subsetneq [1,t]}(-1)^{t-|J|}
(\prod\limits_{j\in [1,t]\setminus J}a_j)X^{e+\sum\limits_{j\in J}g_j}+(-1)^t(\prod\limits_{i=1}^t a_i)X^e$. Since $\sum\limits_{i=1}^t g_i\notin H_e$, it follows from \eqref{equation He=e+x} that $f\neq 0$.
By the arbitrariness of $a_1,\ldots,a_t$ taking from $K\setminus \{0\}$, we have that $d(S,K)\geq t=\Psi(S)=\max\left( \Psi(S), d(H_e, K)\right)$, completing the proof of Conclusion (i).

(ii) This conclusion follows from Conclusion (i) and Theorem C immediately. \qed

\noindent {\bf Proof of Corollary \ref{Corollary elementarysemigroup}}. Let $S=G\cup N$ where $G$ is a group and $N$ is a nilsemigroup and $G$ acts on $N$.
By Theorem \ref{Theorem for finite}, it suffices to show that $d(S,K)\geq  \epsilon_s (\psi(s)+1)+d(\Gamma(H_s),K)$ for any $s\in S$.

Suppose $s\in G$. Then $\epsilon_s=0$ and $\Gamma(H_s)\cong G$. It follows from Lemma \ref{Lemma subsemigroup} that
$d(S,K)\geq  d(G,K)= \epsilon_s \ (\psi(s)+1)+d(\Gamma(H_s),K)$, done.

Suppose $s=\infty$ is the zero element of the nilsemigroup $N$, which is  also the zero element of the semigroup $S$.
We see that $\epsilon_s=0$, $|H_s|=1$ and the $|\Gamma(H_s)|=1$ which implies that $d(\Gamma(H_s),K)=0$. Then $d(S,K)
\geq \epsilon_s \ (\psi(s)+1)+d(\Gamma(H_s),K)$ holds trivially.

Hence, we need only to consider the case that $$s\in N\setminus \{\infty\}.$$
Let $T=s_1\cdot\ldots\cdot s_t$ such that $s_1\succ_{\mathcal{H}}(s_1+s_2)\succ_{\mathcal{H}}\cdots \succ_{\mathcal{H}} \sum\limits_{i=1}^{t}s_{i} \mathcal{H} s,$ where $t=\psi(a)+1$. Since $G$ acts on the nilsemigroup, we have that $s_i\in N$ for each $i\in [1,t]$. Then we take a sequence $U=g_1\cdot\ldots\cdot g_m$ over $G$ such that the sequence
\begin{equation}\label{equation all for ai neq 0}
\prod\limits_{i=1}^m (X^{\gamma_{g_i}}-a_i X^{0_{\Gamma}})\neq 0\in K[X;G] \mbox{ for all } a_1,\ldots,a_{m}\in K\setminus \{0\}.
\end{equation}
with $m=d(\Gamma(H_s),K)$.

Then we take arbitrary $b_1,\ldots,b_{m+t}\in K\setminus \{0\}$. Since $e(s_j)=\infty$ for each $j\in [1,t]$ and
Then $\prod\limits_{j=1}^{m} (X^{g_j}-b_{t+j} X^{e(s_j)})\prod\limits_{i}^{t} (X^{s_i}-b_i X^{e(s_i)})=
\prod\limits_{j=1}^{m} (X^{\gamma_{g_j}}-b_{t+j} X^{0_{\Gamma}})\prod\limits_{i}^{t} (X^{s_i}-b_i X^{\infty})=
\prod\limits_{j=1}^{m} (X^{\gamma_{g_j}}-b_{t+j} X^{0_{\Gamma}})(X^{\sum\limits_{i=1}^t s_i}+ c X^{\infty})=
\prod\limits_{j=1}^{m} (X^{\gamma_{g_j}}-b_{t+j} X^{0_{\Gamma}})X^{\sum\limits_{i=1}^t s_i}+\prod\limits_{j=1}^{m} (X^{\gamma_{g_j}}-b_{t+j} X^{0_{\Gamma}})c X^{\infty}=\prod\limits_{j=1}^{m} (X^{\gamma_{g_j}}-b_{t+j} X^{0_{\Gamma}})X^{\sum\limits_{i=1}^t s_i}+d X^{\infty}$ where $c,d\in K$.  By lemma, we see that the action of $\Gamma$ on $H_s$ is simply.
Since $\prod\limits_{j=1}^{m} (X^{\gamma_{g_j}}-b_{t+j} X^{0_{\Gamma}})\neq 0$, it follows that $\prod\limits_{j=1}^{m} (X^{\gamma_{g_j}}-b_{t+j} X^{0_{\Gamma}})X^{\sum\limits_{i=1}^t s_i}\in K[X;H_s]\setminus \{0\}$. Since $d X^{\infty}\notin K[X;H_s]$, it follows that $\prod\limits_{j=1}^{m} (X^{g_j}-b_{t+j} X^{e(s_j)})\prod\limits_{i}^{t} (X^{s_i}-b_i X^{e(s_i)})\neq 0$, completing the proof of the conclusion.
\qed

Note that the direct product of cyclic semigroups is an archimedean semigroup. Hence, as application of Corollary \ref{Corollary Archimedean}, we have the following.

\begin{prop}\label{Proposition in the product cyclic semigroups} For $r\geq 1$, let $S={\rm C}_{k_1; n_1}\times \cdots \times {\rm C}_{k_r; n_r}$ be a direct product of cyclic semigroups with $k_1\leq k_2\leq \cdots\leq k_r$. Let $K$ be a field. Let $G_S=\bigoplus\limits_{i=1}^r \mathbb{Z}_{n_i}$. Suppose that either (i) $K$ is an algebraically closed field of characteristic zero, or (ii) ${\rm Char}(K)=p$ and $\prod\limits_{i=1}^{r} n_i$ is a $p-$power.
Then $d(S,K)=\max\left(k_r-1,d(\bigoplus\limits_{i=1}^r \mathbb{Z}_{n_i};K)\right)\geq \max\left(k_r-1,{\rm D}(\bigoplus\limits_{i=1}^r \mathbb{Z}_{n_i})-1\right),$ and the equality $d(S,K)= \max\left(k_r-1,{\rm D}(\bigoplus\limits_{i=1}^r \mathbb{Z}_{n_i})-1\right)$ holds if $r=1$, or ${\rm Char}(K)=p$.
\end{prop}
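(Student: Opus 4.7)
The plan is to apply Corollary \ref{Corollary Archimedean} to $S$. Since $S=C_{k_1;n_1}\times\cdots\times C_{k_r;n_r}$ is archimedean (as noted in the text preceding the proposition), it has a unique idempotent $e=(e_1,\ldots,e_r)$, where $e_i$ is the idempotent of $C_{k_i;n_i}$, and its largest subgroup is $G_S=H_e=\prod_{i=1}^r G_i$ with $G_i$ the group part of $C_{k_i;n_i}$, which is cyclic of order $n_i$; hence $G_S\cong\bigoplus_{i=1}^r\mathbb{Z}_{n_i}$, matching the notation of the statement. The hypotheses of Corollary \ref{Corollary Archimedean} are inherited from the proposition: an algebraically closed field of characteristic zero is automatically a splitting field of the finite abelian group $\mathbb{Z}_{\exp(S)}$, so (i) of the Corollary is implied by (i) here; and $\exp(S)={\rm lcm}(n_1,\ldots,n_r)$ divides $\prod_i n_i$, so a $p$-power hypothesis on $\prod n_i$ transfers to $\exp(S)$.

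The main task is to establish $\Psi(S)=k_r-1$. For the lower bound I would exhibit the chain $\underline{a}^{(j)}=(e_1,\ldots,e_{r-1},(k_r-j)x_r)$ for $j=0,1,\ldots,k_r-1$: taking $\underline{c}=(e_1,\ldots,e_{r-1},x_r)$, Lemma \ref{Lemma cyclic semigroup} gives $\underline{a}^{(j)}=\underline{a}^{(j+1)}+\underline{c}$, while the reverse relation $\underline{a}^{(j+1)}=\underline{a}^{(j)}+\underline{d}$ is blocked because in the non-group part of $C_{k_r;n_r}$ no sum with a positive-index term can have a smaller index. For the upper bound, take any strictly ascending chain $(\underline{a}^{(0)})\subsetneq(\underline{a}^{(1)})\subsetneq\cdots\subsetneq(\underline{a}^{(t)})$ with $t\geq 1$. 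The element $\underline{a}^{(1)}$ cannot lie in $G_S$: otherwise $(\underline{a}^{(1)})=(e)=G_S$, and since in the archimedean semigroup $S$ the idempotent satisfies $e\preceq_{\mathcal{H}}\underline{a}$ for every $\underline{a}\in S$, we would have $(\underline{a}^{(0)})\supseteq G_S=(\underline{a}^{(1)})$, contradicting strictness. Fix a coordinate $i_0$ for which $a_{i_0}^{(1)}$ lies in the non-group part of $C_{k_{i_0};n_{i_0}}$. Reading $\underline{a}^{(j-1)}=\underline{a}^{(j)}+\underline{c}^{(j)}$ coordinate-wise yields $a_{i_0}^{(j-1)}=a_{i_0}^{(j)}+c_{i_0}^{(j)}$; since the group part of $C_{k_{i_0};n_{i_0}}$ is an ideal, if $a_{i_0}^{(j)}$ were in the group part then so would $a_{i_0}^{(j-1)}$, and iterating down to $j=1$ would contradict the choice of $i_0$. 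Therefore $a_{i_0}^{(j)}$ is non-group for every $j\in[1,t]$, and because $c_{i_0}^{(j)}$ has positive index, Lemma \ref{Lemma cyclic semigroup} forces the index of $a_{i_0}^{(j-1)}$ to be strictly greater than that of $a_{i_0}^{(j)}$. Hence the indices of $a_{i_0}^{(1)},a_{i_0}^{(2)},\ldots,a_{i_0}^{(t)}$ form a strictly decreasing sequence inside $\{1,2,\ldots,k_{i_0}-1\}$, giving $t\leq k_{i_0}-1\leq k_r-1$.

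Combining $\Psi(S)=k_r-1$ with $G_S\cong\bigoplus_{i=1}^r\mathbb{Z}_{n_i}$, Corollary \ref{Corollary Archimedean}(i) yields $d(S,K)=\max(k_r-1,\,d(\bigoplus_{i=1}^r\mathbb{Z}_{n_i},K))$, and Corollary \ref{Corollary Archimedean}(ii) gives $d(S,K)\geq\max(k_r-1,\,{\rm D}(\bigoplus_{i=1}^r\mathbb{Z}_{n_i})-1)$, with equality when $G_S$ is cyclic (covering the case $r=1$, where $G_S=\mathbb{Z}_{n_1}$) or when ${\rm Char}(K)=p$ (which is hypothesis (ii) of the Corollary). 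The main obstacle is the upper bound $\Psi(S)\leq k_r-1$: the naive coordinate-by-coordinate bound only yields $\Psi(S)\leq\sum_i(k_i-1)$, and the sharper bound requires the observation that in a direct product the non-group status of a single coordinate propagates up the entire chain and forces its index to strictly decrease at every single step.
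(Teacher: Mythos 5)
Your proof is correct and follows essentially the same route as the paper: identify $S$ as archimedean with unique idempotent, note $G_S\cong\bigoplus_{i=1}^r\mathbb{Z}_{n_i}$ and $\Psi(S)=k_r-1$, and apply Corollary \ref{Corollary Archimedean} together with Theorem C. The only difference is that you supply an explicit chain and a coordinate-tracking argument for $\Psi(S)=k_r-1$, which the paper asserts directly from Lemma \ref{Lemma cyclic semigroup} without detail.
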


\begin{proof} By Lemma \ref{Lemma cyclic semigroup}, we can derive that $S$ contains a unique idempotent (one can also see Lemma 3.3 in \cite{wangCommunication}), and moreover, $\Psi(S)=k_r-1$. By Lemma \ref{Lemma f.c.s is archi iff one idemptoent}, we have that $S$ is archimedean. Observe that $G_s\simeq \bigoplus\limits_{i=1}^r \mathbb{Z}_{n_i}$. Then the conclusion follows from Corollary \ref{Corollary Archimedean},  Theorem C.
\end{proof}

In the end of this section, we shall give a semigroup, in which $d(S,R)<\infty$ but ${\rm D}(S\diagup H)=\infty$.

\begin{exam}\label{Example D not finite} Let $R$ be unitary commutative ring. Let $S=\mathbb{N}_{>1}$ be the set of positive integers. The operation on $S$ is given by: $ab\triangleq {\rm lcm}(a,b)$ for any $a,b\in N$. It is easy to check that $S$ is a commutative semilattice, and so a commutative periodic semigroup. Moreover, we can check that $d(S,R)=0$ since every element is idempotent, and that ${\rm D}(S\diagup H)=\infty$, since any $n$ pairwise coprime integers greater than $1$ forms an additive irreducible sequence over the semigroup $S$.
\end{exam}

Example \ref{Example D not finite} shows that the condition ${\rm D}(S\diagup H)<\infty$ is not necessary such that $d(S,R)<\infty$ for a commutative periodic semigroup $S$. Below we give a theorem by replacing the condition ${\rm D}(S\diagup H)<\infty$ with a new one. Moreover,  we shall give
a conjecture (see Conjecture \ref{conjecture 1}) on the necessary and sufficient conditions such that $d(S,R)<\infty$.

\begin{theorem}\label{Theorem rivising} Let $R$ be a commutative unitary ring, and
let $S$ be a commutative periodic semigroup such that ${\rm sup}\{{\rm D_{\bar{a}}}(S\diagup \mathcal{H}: a\in S \mbox{ with } \epsilon_a=1\}<\infty$. Then $d(S,R)<\infty$ if and only if $d(\Gamma(H_a),R)<\infty$ for each $a\in S$. Moreover, in case of  $d(S,R)<\infty$, we have
the following best possible upper and lower bounds $$\max\limits_{a\in S}\left\{d(\Gamma(H_a),R)\right\} \leq d(S,R)\leq\max\limits_{a\in S}\left\{\epsilon_a \ {\rm D_{\bar{a}}}(S\diagup \mathcal{H}) +d(\Gamma(H_a),R)\right\},$$ where $\epsilon_a$ and $\bar{a}$ are given as Theorem \ref{Theorem General bound for finite commutative semigroup}.
\end{theorem}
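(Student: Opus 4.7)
The plan is to mimic the proof of Theorem \ref{Theorem General bound for finite commutative semigroup} essentially verbatim, observing that the boundedness of ${\rm D}_{\bar{a}}(S\diagup\mathcal{H})$ is used there as an arithmetic quantity only in the case $\epsilon_s=1$ (when $\langle s\rangle$ is not a group); in the case $\epsilon_s=0$, the argument proceeds entirely through the group-algebra structure of $H_e$ and never invokes ${\rm D}_{\bar{s}}$. Consequently, replacing the hypothesis ${\rm D}(S\diagup\mathcal{H})<\infty$ by the weaker assumption that ${\rm D}_{\bar{a}}(S\diagup\mathcal{H})$ is uniformly bounded on the set $\{a\in S:\epsilon_a=1\}$ suffices for the same bounds to go through.

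Assuming $d(\Gamma(H_a),R)<\infty$ for every $a\in S$, I would set $\ell=1+\max_{a\in S}\{\epsilon_a\,{\rm D}_{\bar{a}}(S\diagup\mathcal{H})+d(\Gamma(H_a),R)\}$ (finite by the new hypothesis) and take an arbitrary sequence $T=s_1\cdot\ldots\cdot s_\ell$ with $s=\sum_{i=1}^\ell s_i$. If $\epsilon_s=1$, the new hypothesis supplies the finite bound ${\rm D}_{\bar{s}}(S\diagup\mathcal{H})<\infty$, so one can pick $t\le{\rm D}_{\bar{s}}(S\diagup\mathcal{H})$ indices whose partial sum is $\mathcal{H}$-equivalent to $s$; the subsequent analysis involving $V=\{b\in S:b\preceq_{\mathcal{H}} s\}$, the representations $\varphi:R[X;{\rm St}(H_s)]\to{\rm End}_R(R[X;V])$ and $\psi:R[X;\Gamma(H_s)]\to{\rm End}_R(R[X;V])$, and the lift relation $\varphi=\psi\widetilde{\pi}_s$ from Lemma \ref{Lemma homomorphism semigrouringtogroupring} carries over word-for-word. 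If $\epsilon_s=0$, the argument is identical to the corresponding case in Theorem \ref{Theorem General bound for finite commutative semigroup}: set $e=\sum e(s_i)$ and $s_i'=e+s_i\in H_e$ (using Lemma \ref{Lemma all in He}), note that $\ell\ge 1+d(H_e,R)$ since $\epsilon_s{\rm D}_{\bar{s}}=0$, and conclude via the same telescoping identity in $R[X;S]$.

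For the lower bound $\max_a d(\Gamma(H_a),R)\le d(S,R)$ and the converse direction (finiteness of $d(S,R)$ forces each $d(\Gamma(H_a),R)$ to be finite), I would reproduce the closing paragraph of the proof of Theorem \ref{Theorem General bound for finite commutative semigroup}: for any $b\in S$ with ${\rm St}(H_b)\ne\emptyset$ and any algebraically zero-sum free sequence $\gamma_1\cdot\ldots\cdot\gamma_m$ over $\Gamma(H_b)$, lift the $\gamma_i$ to $g_i\in{\rm St}(H_b)$ via surjectivity of $\pi_b$, and apply Lemma \ref{Lemma homomorphism semigrouringtogroupring} to deduce the required nonvanishing in $R[X;S]$ from the nonvanishing in $R[X;\Gamma(H_b)]$. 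The sharpness claim is inherited from Theorem \ref{Theorem for finite}, Corollary \ref{Corollary Clifford}, and Corollary \ref{Corollary Archimedean}.

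The main thing to verify carefully is that the case $\epsilon_s=0$ in the original proof really makes no hidden use of ${\rm D}_{\bar{s}}$ — the key point being that the term $\epsilon_s{\rm D}_{\bar{s}}(S\diagup\mathcal{H})$ appearing in the definition of $\ell$ vanishes automatically in that case, so the bound $\ell\ge 1+d(\Gamma(H_s),R)$ is all that the subsequent calculation requires. Beyond this careful bookkeeping, the proof is a near-mechanical transcription: no new ideas are needed, only replacement of the symbol ${\rm D}(S\diagup\mathcal{H})<\infty$ by the restricted supremum condition throughout.
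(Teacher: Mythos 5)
Your proposal is correct and coincides with the paper's own treatment: the paper's entire proof of this theorem is the single line ``Using the same arguments of Theorem \ref{Theorem General bound for finite commutative semigroup}.'' Your careful bookkeeping --- that ${\rm D}_{\bar{a}}(S\diagup\mathcal{H})$ enters the argument only through the term $\epsilon_s\,{\rm D}_{\bar{s}}(S\diagup\mathcal{H})$ in the case $\epsilon_s=1$, so the restricted supremum hypothesis is all that is needed --- is exactly the justification the paper leaves implicit.
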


\begin{proof} Using the same arguments of Theorem \ref{Theorem General bound for finite commutative semigroup}.
\end{proof}

Then we close this section with the following conjecture.

 \begin{conj}\label{conjecture 1} Let $S$ be a commutative periodic semigroup, and let $R$ be a commutative unitary ring. If $d(S,R)<\infty$ then ${\rm sup}\left\{{\rm D}_{\bar{a}}(S\diagup \mathcal{H}): a\in S \mbox{ and } \langle a\rangle \mbox{  is not a group} \right\}<\infty$.
\end{conj}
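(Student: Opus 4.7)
The plan is to establish the contrapositive: assuming $\sup\{D_{\bar a}(S/\mathcal H) : a \in S,\ \langle a\rangle \text{ not a group}\} = \infty$, I will produce algebraically zero-sum free sequences over $S$ of arbitrary length, forcing $d(S,R) = \infty$. Given any prescribed $N$, I will pick $a \in S$ with $\langle a\rangle$ not a group and an additively irreducible sequence $\bar T = \bar a_1 \cdot \ldots \cdot \bar a_\ell$ over $S/\mathcal H$ of length $\ell \geq N$ with $\sigma(\bar T) = \bar a$, lift it termwise to $T = a_1 \cdot \ldots \cdot a_\ell$ in $S$, and aim to prove that $T$ is algebraically zero-sum free over $R$; iterating over $N$ then yields $d(S,R) = \infty$.

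To verify algebraic zero-sum freeness, I will expand $f = \prod_{i=1}^\ell(X^{a_i} - c_i X^{e(a_i)})$ and index summands by subsets $I \subseteq [1,\ell]$, obtaining exponent $b_I = \sum_{j\notin I} a_j + \sum_{i\in I} e(a_i)$ with coefficient $(-1)^{|I|}\prod_{i\in I} c_i$. Since $(1+\rho(a_i))a_i$ lies in the group-part of $\langle a_i\rangle$, we have $\overline{e(a_i)} + \bar a_i = \overline{e(a_i)}$ in $S/\mathcal H$, so $\overline{b_I} = \bar a + \sum_{i\in I}\overline{e(a_i)} \preceq_{\mathcal H} \bar a$. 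My target is to show the coefficient of $X^{\sigma(T)}$ in $f$ is exactly $1$, for which it suffices to prove the following \emph{subclaim:} $\bar a + \sum_{i\in I}\overline{e(a_i)} \neq \bar a$ in $S/\mathcal H$ for every nonempty $I \subseteq [1,\ell]$.

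Setting $f_I = \sum_{i\in I}\overline{e(a_i)} = \overline{e(s_I)}$ where $s_I = \sum_{i\in I}\bar a_i$, I attack the subclaim by assuming $\bar a + f_I = \bar a$ and seeking a contradiction. Iteration yields $\overline{e(a)} + f_I = \overline{e(a)}$, so $f_I \succeq_{\mathcal H} \overline{e(a)}$ in the semilattice of idempotents; combining this with the period-$1$ collapse $f_I + \bar s_I = f_I$ inside the quotient cyclic semigroup $\langle\bar s_I\rangle$ gives $\bar a = f_I + s_{I^c}$ with $s_{I^c} = \sum_{i\notin I}\bar a_i$. Since $f_I = \rho(s_I)\bar s_I$, this identity is equivalent to the statement that the enlarged sequence $\bar T \cdot (\prod_{i\in I}\bar a_i)^{[\rho(s_I)-1]}$ still sums to $\bar a$. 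I will then try to leverage this repeated-terms sum, together with the hypothesis that $\bar a$ is $\mathcal H$-inequivalent to any idempotent (precisely the condition $\langle a\rangle$ not a group), to extract a proper subsequence of $\bar T$ summing to $\bar a$, contradicting irreducibility.

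The hard part will be precisely this extraction: since the enlarged sequence is longer rather than shorter than $\bar T$, the irreducibility of $\bar T$ does not immediately rule it out, and pulling out a genuinely proper subsequence of $\bar T$ with the required sum is delicate. If the purely combinatorial route fails, my fallback will be the representation framework from the proof of Theorem~\ref{Theorem General bound for finite commutative semigroup}: analyze the $H_a$-portion of $f$ via the epimorphism $\widetilde{\pi}_a : R[X;\mathrm{St}(H_a)] \to R[X;\Gamma(H_a)]$ of Lemma~\ref{Lemma homomorphism semigrouringtogroupring} and reduce non-vanishing to an analogous assertion in the group algebra $R[X;\Gamma(H_a)]$ to which Theorem C applies. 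That route will likely require arranging $e(a_i) \in \mathrm{St}(H_a)$ for every $i$ by a preliminary adjustment of the lift, or by passing to a cofinal irreducible sequence supported in $\mathrm{St}(H_a)$; deciding whether such adjustments are always possible is the remaining obstacle.
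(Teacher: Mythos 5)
First, be aware that the statement you are proving is posed in the paper as an \emph{open conjecture} (Conjecture~\ref{conjecture 1}); the paper offers no proof of it, so there is nothing to compare your argument against, and a complete proof would be a new result. Your proposal, however, contains a fatal gap, and it is not only the extraction step you flag at the end: the central subclaim --- that for a termwise lift $T=a_1\cdot\ldots\cdot a_\ell$ of an additively irreducible sequence $\bar T$ with $\sigma(\bar T)=\bar a$ and $\langle a\rangle$ not a group one has $\bar a+\sum_{i\in I}\overline{e(a_i)}\neq\bar a$ for every nonempty $I$ --- is false, and with it the underlying premise that such lifts are algebraically zero-sum free. Concretely, let $S=\{x_1,x_2,x_3,y_1,y_2,z_1,z_2,z_3\}$ with $x_i+x_j=x_{\min(i+j,3)}$, $y_i+y_j=y_{\min(i+j,2)}$, $x_i+y_j=z_i$, $z_m+x_i=z_{\min(m+i,3)}$, $z_m+y_j=z_m$, $z_m+z_{m'}=z_{\min(m+m',3)}$ (the quotient of $\mathbb{N}^2\setminus\{0\}$ identifying $(m,n)\sim(m,n')$ for $m,n,n'\geq 1$, with coordinates capped). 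One checks $S$ is $\mathcal H$-trivial, so $S=S\diagup\mathcal H$; the sequence $T=x_1\cdot y_1$ is additively irreducible with sum $z_1$, and $\langle z_1\rangle=\{z_1,z_2,z_3\}$ is not a group. Yet for $I=\{2\}$ we get $z_1+e(y_1)=z_1+y_2=z_1$, violating your subclaim, and indeed $(X^{x_1}-c_1X^{x_3})(X^{y_1}-c_2X^{y_2})=(1-c_2)X^{z_1}-c_1(1-c_2)X^{z_3}$ vanishes at $c_2=1$, so $T$ is \emph{not} algebraically zero-sum free. Moreover both irreducible sequences of maximal length $2={\rm D}_{z_1}(S\diagup\mathcal H)$ with sum $z_1$ (namely $x_1\cdot y_1$ and $x_1\cdot y_2$) fail identically, so no choice of lift or of maximal irreducible sequence rescues the argument for this element.

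The failure mode is precisely what you anticipated but hoped to avoid: $\sigma(T)$ can absorb the idempotent $e(s_I)$ of a partial sum without $\bar T$ being reducible, because irreducibility constrains only subsequences of $\bar T$, not the longer sequences obtained by appending $\rho(\bar s_I)$ extra copies of the terms indexed by $I$. Your fallback through $\widetilde\pi_a$ and Theorem~C also gives nothing here: in the example ${\rm St}(H_{z_1})=\{y_1,y_2\}$ and $\Gamma(H_{z_1})$ is trivial, so the group-algebra lower bound is $0$ while ${\rm D}_{\bar z_1}(S\diagup\mathcal H)=2$. Any genuine proof of the conjecture must relate ${\rm D}_{\bar a}(S\diagup\mathcal H)$ to $d(S,R)$ by a mechanism other than ``additively irreducible implies algebraically zero-sum free,'' since that implication is simply false.
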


\begin{remark} If Conjecture \ref{conjecture 1} is verified,  by Theorem \ref{Theorem rivising} we shall have the following conclusion immediately: For any commutative periodic group $S$ and any unitary commutative ring $R$,  then $d(S;R)<\infty$ if, and only if, $${\rm sup}\{{\rm D}_{\bar{a}}(S\diagup \mathcal{H}): a\in S \mbox{ and }\langle a\rangle \mbox{ is not a group} \}<\infty$$ and $d(\Gamma(H_s),R)<\infty$ for each $s\in S$.
\end{remark}

\section{d(S;R) with $R$ being of a prime characteristic}

In this section, we shall discuss the connection of the invariant $d(S;R)$ with the generators of the semigroups $S$ in case that $R$ has a prime characteristic. To begin with, we need to introduce  some notation and lemmas.  It is well known that any finitely generated periodic commutative semigroup is finite. Hence, we shall focus on the finite commutative semigroup below.  For any finite commutative semigroup $S$, we define $$\Lambda(S)=\min\limits_A\{1+\sum\limits_{a\in A} (\rho(a)-1)\}$$ where $A$ takes all generating sets of the semigroup $S$.
Suppose that $G$ is a finite abelian group. Then $\Lambda(G)\leq d^*(G)+1,$  and equality holds if and only if $G$ is a finite abelian $p$-group. The main theorem in this section is the following theorem.

\begin{theorem}\label{Theorem main theorem} \  Let $p$ be a prime.  Let $S$ be a finite commutative semigroup such that $\exp(S)$ is a $p$-power, and let $R$ be commutative unitary ring of characteristic $p$. Then the following conclusions hold.

(i) Suppose $T=s_1\cdot\ldots\cdot s_{\ell}$ is a sequence over $S$ such that $\ell \geq \Lambda(S)$. Then $\prod\limits_{i=1}^{\ell} (X^{s_i}-X^{e(s_i)})=0\in R[X;S]$, and moreover, ${\rm St}(e)\cap \sum(T)\neq \emptyset$ where $e=\sum\limits_{i=1}^m e(s_i)$;

(ii) $d(S,R)\leq \Lambda(S)-1.$
\end{theorem}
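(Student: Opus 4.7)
Part (ii) follows immediately from part (i): taking $a_1=\cdots=a_\ell=1\in R\setminus\{0\}$ in the definition of $d(S,R)$, any sequence of length $\ell\geq\Lambda(S)$ then yields the null relation $\prod_i(X^{s_i}-a_i X^{e(s_i)})=0$, so no such sequence is algebraically zero-sum free. Hence the work lies in part (i), which I would prove by an ideal-theoretic expansion of $P:=\prod_{i=1}^{\ell}(X^{s_i}-X^{e(s_i)})$ controlled by two lemmas.

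The first is a nilpotency lemma: for every $a\in S$, the element $u_a:=X^a-X^{e(a)}\in R[X;S]$ satisfies $u_a^{\rho(a)}=0$. Writing $\rho(a)=qn$ with $n$ the period of $a$ (a power of $p$ by the hypothesis on $\exp(S)$) and $q=\lceil k/n\rceil$, Frobenius in characteristic $p$ gives $u_a^n=X^{na}-X^{ne(a)}=X^{na}-X^{e(a)}$ (using idempotency of $e(a)$). Raising to the $q$-th power and invoking Lemma \ref{Lemma cyclic semigroup} shows every monomial in the binomial expansion of $(X^{na}-X^{e(a)})^q$ collapses to $X^{e(a)}$, so $u_a^{qn}=X^{e(a)}\sum_{i=0}^{q}\binom{q}{i}(-1)^{q-i}=0$. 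The second is a telescoping decomposition: if $s=b_1+\cdots+b_r$, then $e(s)=\sum_j e(b_j)$ by Lemma \ref{lemma with idempotent}, and induction on $r$ gives
\[
X^s-X^{e(s)}=\sum_{j=1}^{r}X^{e(b_1)+\cdots+e(b_{j-1})+b_{j+1}+\cdots+b_r}\,(X^{b_j}-X^{e(b_j)}),
\]
placing $X^s-X^{e(s)}$ in the ideal generated by the $u_{b_j}$.

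Now I would fix a generating set $A$ of $S$ realising the minimum in $\Lambda(S)=1+\sum_{a\in A}(\rho(a)-1)$ and write each $s_i=\sum_{a\in A}c_{i,a}\,a$. The telescoping lemma yields $X^{s_i}-X^{e(s_i)}=\sum_{a\in A}N_{i,a}\,u_a$ for some $N_{i,a}\in R[X;S]$, so expanding $P$ over all maps $\phi\colon[1,\ell]\to A$ writes it as a sum of terms each of the form (element of $R[X;S]$)$\cdot\prod_{a\in A}u_a^{|\phi^{-1}(a)|}$. The nilpotency lemma annihilates any term with $|\phi^{-1}(a)|\geq\rho(a)$ for some $a$; otherwise $\ell=\sum_a|\phi^{-1}(a)|\leq\sum_a(\rho(a)-1)=\Lambda(S)-1$, contradicting $\ell\geq\Lambda(S)$. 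Hence $P=0$.

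For the moreover-part of (i), I would read off the coefficient of $X^e$ from the raw expansion $P=\sum_{I\subseteq[1,\ell]}(-1)^{|I|}X^{\sigma(I^c)+\tau(I)}$, where $\tau(I):=\sum_{i\in I}e(s_i)$ (with the edge-case conventions that $I=[1,\ell]$ contributes $(-1)^\ell X^e$ and $I=\emptyset$ contributes $X^{\sigma(T)}$). The term $I=[1,\ell]$ contributes $(-1)^\ell\ne 0$ in $R$, so some other index set $I_0$ must satisfy $\sigma(I_0^c)+\tau(I_0)=e$. If $I_0=\emptyset$ then $\sigma(T)=e$ and $T'=T$ witnesses the conclusion; otherwise put $J=I_0^c\ne\emptyset$ and use idempotency of $\tau(J)$ to compute $\sigma(J)+e=\sigma(J)+\tau(J)+\tau(I_0)=\tau(J)+e=e$, so the subsequence indexed by $J$ gives ${\rm St}(e)\cap\sum(T)\ne\emptyset$. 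The main technical obstacle is the nilpotency lemma when $q>1$: then $\rho(a)$ itself is not a $p$-power, so Frobenius alone does not suffice, and one must exploit the collapsing behaviour of $X^{e(a)}$ in the cyclic semigroup $\langle a\rangle$ to absorb the residual factor $q$.
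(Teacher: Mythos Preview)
Your proof is correct and follows essentially the same route as the paper: your nilpotency lemma is the paper's Lemma~\ref{lemma monomial of power rho(b)}, your telescoping decomposition is Lemma~\ref{lemma factorization}, the pigeonhole on the exponents $|\phi^{-1}(a)|$ is exactly the paper's main step, and your coefficient-of-$X^e$ argument for the moreover-part is the content of Lemma~\ref{lemma if equals zeor}. The only cosmetic difference is that you give the explicit telescoping coefficients and spell out the collapse in $(X^{na}-X^{e(a)})^q$ a bit more carefully than the paper does.
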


To Prove Theorem \ref{Theorem main theorem}, we shall need some lemmas as follow.
\begin{lemma}\label{lemma factorization} \ Let $S$ be a finite commutative semigroup and $R$ a
commutative unitary ring $R$. Let $a_1,\ldots,a_k\in S$ (not necessarily distinct), and $s=\sum\limits_{i=1}^k a_i$. Then $X^{s}-X^{e(s)}=\sum\limits_{i=1}^k \beta_i (X^{a_i}-X^{e(a_i)})$  for some   $\beta_1,\ldots, \beta_k\in R[S].$
\end{lemma}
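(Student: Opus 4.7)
The plan is to prove this by induction on $k$, using Lemma \ref{lemma with idempotent} ($e(a+b)=e(a)+e(b)$) together with a simple telescoping identity in the semigroup ring $R[X;S]$. The characteristic of $R$ plays no role, which matches the fact that the lemma is stated with no hypothesis on $R$ beyond being a commutative unitary ring.

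The base case $k=1$ is immediate: $s=a_1$ forces $e(s)=e(a_1)$, so we take $\beta_1=1_R$. For the inductive step, set $s'=\sum_{i=1}^{k-1}a_i$ so that $s=s'+a_k$, and by Lemma \ref{lemma with idempotent} we have $e(s)=e(s')+e(a_k)$. The key observation is the telescoping
\[
X^{s}-X^{e(s)}=X^{s'+a_k}-X^{e(s')+a_k}+X^{e(s')+a_k}-X^{e(s')+e(a_k)}=X^{a_k}\bigl(X^{s'}-X^{e(s')}\bigr)+X^{e(s')}\bigl(X^{a_k}-X^{e(a_k)}\bigr),
\]
which holds in $R[X;S]$ because the multiplication there is just addition of semigroup elements in the exponents, and one can legitimately factor out the common exponent from each monomial pair.

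Applying the inductive hypothesis to the shorter list $a_1,\ldots,a_{k-1}$, there exist $\beta'_1,\ldots,\beta'_{k-1}\in R[X;S]$ with $X^{s'}-X^{e(s')}=\sum_{i=1}^{k-1}\beta'_i(X^{a_i}-X^{e(a_i)})$. Substituting and collecting, one obtains the desired expression with $\beta_i=X^{a_k}\beta'_i$ for $i\in[1,k-1]$ and $\beta_k=X^{e(s')}$. There is no real obstacle; the only care needed is to make sure Lemma \ref{lemma with idempotent} is quoted correctly so that the two idempotent exponents actually coincide, namely $e(s)=e(s')+e(a_k)$, which is what makes the telescoping close up. No hypothesis on $R$ or on $\exp(S)$ enters, so the identity will be available for use in the characteristic-$p$ arguments of the next section.
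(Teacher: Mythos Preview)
Your proof is correct and follows essentially the same approach as the paper: the paper also uses Lemma \ref{lemma with idempotent} and the same telescoping identity, writing out the case $k=2$ with $\beta_1=X^{a_2}$, $\beta_2=X^{e(a_1)}$ and then appealing to induction on $k$. Your version just spells out the inductive step more explicitly.
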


\begin{proof} Say $k=2$. By Lemma \ref{lemma with idempotent}, we have $X^{s}-X^{e(s)}=X^{a_1+a_2}-X^{e(a_1+a_2)}=X^{a_1}X^{a_2}-X^{e(a_1)}X^{e(a_2)}=\beta_1 (X^{a_1}-X^{e(a_1)})+\beta_2 (X^{a_2}-X^{e(a_2)})$ where $\beta_1=X^{a_2}\in R[S]$ and $\beta_2=X^{e(a_1)}\in R[S]$, done. Then the lemma follows readily by the induction on $k$.
\end{proof}

\begin{lemma}\label{lemma monomial of power rho(b)} \ Let $S$ be a commutative semigroup, and let $p$ be a prime number. Let $R$ be commutative unitary ring of characteristic $p$. For any periodic element $b$ of $S$ such that the period $\mathcal{P}(b)$ is a $p$-power, we have $(X^{b}-X^{e(b)})^{\rho(b)}=0\in R[S]$.
\end{lemma}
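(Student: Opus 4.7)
The plan is a two-step reduction. First, I will use the Frobenius map to collapse the $n$-th power, where $n = \mathcal{P}(b) = p^t$ by hypothesis. Since ${\rm Char}(R) = p$, iterating Frobenius $t$ times gives
\[ (X^b - X^{e(b)})^n \;=\; X^{nb} - X^{n \cdot e(b)} \;=\; X^{nb} - X^{e(b)}, \]
the last equality holding because $e(b)$ is idempotent in $S$, so $n \cdot e(b) = e(b)$. Writing $m = \rho(b)/n = \lceil k/n \rceil \geq 1$ (where $k$ is the index of $b$), using \eqref{equation rho(x)=knn}, I then have
\[ (X^b - X^{e(b)})^{\rho(b)} \;=\; \bigl( (X^b - X^{e(b)})^n \bigr)^m \;=\; (X^{nb} - X^{e(b)})^m, \]
so the task reduces to showing the latter is $0 \in R[S]$.

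Second, I will expand $(X^{nb} - X^{e(b)})^m$ by the ordinary binomial theorem and verify that \emph{every} monomial that appears collapses to $X^{e(b)}$. The general $i$-th term carries semigroup exponent $i \cdot nb + (m-i) \cdot e(b)$ for $i \in [0,m]$. When $i = m$, this is $\rho(b)\, b = e(b)$ by \eqref{equation e(x)=rho(x)x}; when $i = 0$, it is $m \cdot e(b) = e(b)$ because $e(b)$ is idempotent and $m \geq 1$. For $1 \leq i \leq m-1$, both summands are present, so the expression equals $inb + e(b) = (in + \rho(b)) b$; since $in + \rho(b) \geq \rho(b) \geq k$ and $in + \rho(b) \equiv 0 \pmod n$, Lemma \ref{Lemma cyclic semigroup} identifies this with the unique element of $\langle b \rangle$ in the group part whose coefficient is $\equiv 0 \pmod n$, namely $e(b)$ itself.

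Once this uniform collapse is in hand, factoring $X^{e(b)}$ out of the expansion yields
\[ (X^{nb} - X^{e(b)})^m \;=\; \Bigl( \sum_{i=0}^m \binom{m}{i} (-1)^{m-i} \Bigr) X^{e(b)} \;=\; (1 + (-1))^m \cdot X^{e(b)} \;=\; 0, \]
because $m \geq 1$ forces $(1+(-1))^m = 0$. The only delicate point in the whole argument is this uniform exponent-collapse, and it is a direct reading of the normal form for finite cyclic semigroups recorded in Lemma \ref{Lemma cyclic semigroup}; no ingredients beyond Frobenius, the idempotency of $e(b)$, and the cyclic-semigroup arithmetic are required.
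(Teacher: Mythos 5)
Your proposal is correct and follows essentially the same route as the paper's proof: apply the Frobenius endomorphism (characteristic $p$) to reduce the $\rho(b)$-th power to $(X^{\mathcal{P}(b)b}-X^{e(b)})^{\rho(b)/\mathcal{P}(b)}$, then expand binomially and observe that every exponent collapses to $e(b)$ via the cyclic-semigroup arithmetic of Lemma \ref{Lemma cyclic semigroup}, so the alternating sum of binomial coefficients kills the whole expression. The only difference is notational (the paper records the collapse as the identity $k\mathcal{P}(b)b+e=e$ before expanding, which you re-derive inline), so no further comment is needed.
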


\begin{proof} Say $\mathcal{P}(b)=p^t$ for some $t\geq 0$. By \eqref{equation rho(x)=knn}, we see $\mathcal{P}(b)\mid \rho(b)$.  Then we set $\rho(b)=n p^t$ where $n\geq 1$. Denote $e=e(b)$.
By Lemma \ref{Lemma cyclic semigroup}, we see that
\begin{equation}\label{equation kP(b)+e=e}
k\mathcal{P}(b)b+e=k\mathcal{P}(b)b+\rho(b)b=e \mbox{ for every } k>0.
\end{equation}
Since $R$ is commutative and of characteristic $p$, it follows from \eqref{equation kP(b)+e=e} that $(X^{b}-X^{e})^{\rho(b)}= [(X^{b}-X^{e})^{p^t} ]^n=[(X^{b})^{p^t}-(X^{e})^{p^t}]^n=[X^{p^t b}-X^{p^t e}]^n=[X^{p^t b}-X^{e}]^n=(X^{p^t b})^n+(-1)^n (X^{e})^n +\sum\limits_{i=1}^{n-1} {n \choose i}(-1)^{n-i} (X^{p^t b})^i \cdot (X^e)^{n-i}=X^{np^t b}+(-1)^n X^{ne}+\sum\limits_{i=1}^{n-1} {n \choose i}(-1)^{n-i} X^{ip^t b+(n-i)e}=X^e+(-1)^n X^e+\sum\limits_{i=1}^{n-1} {n \choose i}(-1)^{n-i} X^{ip^t b+e}=X^e+(-1)^n X^e+\sum\limits_{i=1}^{n-1} {n \choose i}(-1)^{n-i} X^e=(X^e-X^e)^n =0\in R[S]$.
\end{proof}

\begin{lemma}\label{lemma if equals zeor} \ Let $S$ be a commutative periodic semigroup.  Let $R$ be commutative unitary ring. Let $T=s_1\cdot\ldots\cdot s_{m}$ be a sequence over $S$.
If $\prod\limits_{i=1}^m (X^{s_i}-X^{e(s_i)})=0\in R[S]$ then ${\rm St}(e)\cap \sum(T)\neq \emptyset$, where $e=\sum\limits_{i=1}^m e(s_i)$.
\end{lemma}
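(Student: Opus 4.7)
The approach is to expand the product fully and read off information from the vanishing of each monomial's coefficient in $R[S]$. For every $I \subseteq [1,m]$, set
\[
\sigma_I \;:=\; \sum\limits_{j \in [1,m]\setminus I} s_j \;+\; \sum\limits_{i \in I} e(s_i).
\]
Distributivity yields
\[
\prod\limits_{i=1}^{m} (X^{s_i}-X^{e(s_i)}) \;=\; \sum\limits_{I \subseteq [1,m]} (-1)^{|I|}\, X^{\sigma_I}\;\in\; R[S],
\]
so the hypothesis forces $\sum\limits_{I:\, \sigma_I = s} (-1)^{|I|} = 0_R$ in $R$ for every $s \in S$.

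I would then specialize to the distinguished element $s = e$. The full set $I = [1,m]$ always satisfies $\sigma_{[1,m]} = e$ and contributes $(-1)^m$, which is a unit (in particular nonzero) in any nonzero unitary ring $R$. Hence there must exist some proper subset $I' \subsetneq [1,m]$ with $\sigma_{I'} = e$; writing $J := [1,m] \setminus I'$, which is nonempty, and $T' := \mathop{\bullet}\limits_{j\in J} s_j$, this says
\[
\sigma(T') \;+\; \sum\limits_{i \in I'} e(s_i) \;=\; e.
\]

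The last step is the elementary observation that $e + e(s_k) = e$ for every $k \in [1,m]$: since each $e(s_k)$ is idempotent and already appears once in the sum $e = \sum\limits_{i=1}^{m} e(s_i)$, commutativity together with $e(s_k)+e(s_k) = e(s_k)$ collapses $e + e(s_k)$ back to $e$. Applying this identity iteratively for $k \in J$, addition of $\sum\limits_{j \in J} e(s_j)$ to both sides of the displayed equation transforms its left side into $\sigma(T') + e$ and its right side into $e$. Thus $\sigma(T') + e = e$, so $\sigma(T') \in {\rm St}(\{e\}) \cap \sum(T)$, which is what is claimed.

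The only even mildly delicate point is the cancellation argument that produces $I'$, which relies on $(-1)^m$ being nonzero in $R$; this is automatic once $R$ is a nontrivial unitary ring. The periodicity hypothesis on $S$ enters only implicitly, through the existence of the idempotents $e(s_i)$. Everything else is routine bookkeeping using commutativity and the idempotence of each $e(s_k)$, so I do not anticipate any serious obstacle.
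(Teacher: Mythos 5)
Your proposal is correct and follows essentially the same route as the paper: expand the product, observe that the coefficient of $X^{e}$ contains the term $(-1)^{m}$ from choosing every $-X^{e(s_i)}$, so its vanishing forces another monomial $X^{\sigma_{I'}}=X^{e}$ with $I'\subsetneq[1,m]$, and then absorb the idempotents $e(s_i)$ into $e$ to conclude that the corresponding nonempty partial sum of the $s_j$ lies in ${\rm St}(e)\cap\sum(T)$. The only cosmetic difference is that the paper splits off the case $\sum_{j=1}^{m}s_j=e$ separately, whereas you treat it uniformly; both arguments are the same in substance.
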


\begin{proof}   Denote $e_i=e(s_i)$ for $i\in [1,m]$.  By expanding out $\prod\limits_{i=1}^m (X^{s_i}-X^{e(s_i)})$, we conclude that (i) $\sum\limits_{j\in [1,m]} s_j=e$, or (ii) there exists a nonempty subset $J\subsetneq [1,m]$ such that $\sum\limits_{j\in J} s_j+ \sum\limits_{i\in [1,m]\setminus J} e_i=\sum\limits_{i\in [1,m]} e_i=e$. For case (i), we have $\sum\limits_{j\in [1,m]} s_j=e\in {\rm St}(e)$. Hence, we assume (ii) holds.
Then  $\sum\limits_{j\in J} s_j+e=\sum\limits_{j\in J} s_j+\sum\limits_{i\in [1,m]} e_i=\sum\limits_{j\in J} s_j+ (\sum\limits_{i\in [1,m]\setminus J} e_i+\sum\limits_{i\in [1,m]} e_i)=(\sum\limits_{j\in J} s_j+ \sum\limits_{i\in [1,m]\setminus J} e_i)+\sum\limits_{i\in [1,m]} e_i=e+e=e$, and so $\sum\limits_{j\in J} s_j\in  {\rm St}(e)\cap\sum(T)$, we are done.
\end{proof}

Now we are in a position to prove Theorem \ref{Theorem main theorem}.

\noindent {\bf Proof of Theorem \ref{Theorem main theorem}} \ (i). Let $A$ be a generating set of $S$ such that
$1+\sum\limits_{a\in A} (\rho(a)-1)=\Lambda(S)$. For each $i\in [1,\ell]$, since $s_i$ can be represented as a finite sum of elements (not necessarily distinct) from $A$, it follows from Lemma \ref{lemma with idempotent} and Lemma \ref{lemma factorization} that
$X^{s_i}- X^{e(s_i)}=\sum\limits_{a\in A} \beta_{i,\ a} (X^{a}-X^{e(a)})$ where $\beta_{i,\ a}\in R[X;S]$. Thus, $\prod\limits_{i=1}^{\ell} (X^{s_i}- X^{e(s_i)})$ can be represented as a sum of the following monomials:
\begin{equation}\label{equation monomial 1}
\beta \cdot \prod\limits_{a\in A}  (X^{a}-X^{e(a)})^{t_a}
\end{equation}
 where $\beta \in R[X;S]$, $t_a\geq 0$ and $\sum\limits_{a\in A} t_a=\ell$. Since $\ell\geq \Lambda(S)=1+\sum\limits_{a\in A} (\rho(a)-1)$, it follows that there exists some $b\in A$ such that $t_b\geq \rho(b)$. By Lemma \ref{lemma monomial of power rho(b)}, we derive that the monomial given as \eqref{equation monomial 1} satisfies $\beta \cdot \prod\limits_{a\in A}  (X^{a}-X^{e(a)})^{t_a}=\beta \cdot (X^{b}-X^{e(b)})^{\rho(b)}\cdot (X^{b}-X^{e(b)})^{t_b-\rho(b)}\prod\limits_{a\in A\setminus \{b\}}  (X^{a}-X^{e(a)})^{t_a}=0$.  Then ${\rm St}(e)\cap \sum(T)\neq \emptyset$ follows from Lemma \ref{lemma if equals zeor}.

(ii) This conclusion follows readily from Definition \ref{Definition d(S,R)} and Conclusion (i). \qed

\begin{prop}\label{prop using} Let $S$ be a commutative periodic semigroup, and let $T=s_1\cdot\ldots\cdot s_{m}$ be a sequence over $S$. For any domain $R$, and $a_1,\ldots, a_m\in R\setminus \{0_R\}$. Let $$f=\prod\limits_{i=1}^m(X^{s_i}-a_iX^{e(s_i)}) =\sum\limits_{s\in S} c_s X^{s}$$
with $c_s\in R$ for every $s\in S$. If ${\rm St}(e)\cap\sum(T)=\emptyset$ then $c_e\neq 0_R$ where $e=\sum\limits_{i=1}^m e(s_i)$, and in particular, $f\neq 0$.
\end{prop}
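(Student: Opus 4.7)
The plan is to compute the coefficient $c_e$ directly by expanding the product and analyzing which terms contribute to $X^e$. Since only one such term survives under the hypothesis, and its coefficient is manifestly nonzero in a domain, we will be done.

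First I would expand:
\begin{equation*}
f=\prod_{i=1}^{m}(X^{s_i}-a_i X^{e(s_i)})=\sum_{J\subseteq [1,m]}(-1)^{|J|}\Bigl(\prod_{i\in J}a_i\Bigr)\,X^{\sigma_J},
\end{equation*}
where, for each $J\subseteq [1,m]$,
\[
\sigma_J \;=\; \sum_{i\in [1,m]\setminus J}s_i \;+\; \sum_{i\in J}e(s_i),
\]
with the convention that the first sum is absent when $J=[1,m]$. Thus $c_e$ equals the sum of $(-1)^{|J|}\prod_{i\in J}a_i$ over all $J$ with $\sigma_J=e$.

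Next I would show that, under the assumption ${\rm St}(e)\cap \sum(T)=\emptyset$, the only such $J$ is $J=[1,m]$. The inclusion $J=[1,m]$ trivially gives $\sigma_J=\sum_{i=1}^m e(s_i)=e$. For the converse, suppose $J\subsetneq [1,m]$ and set $b=\sum_{i\in [1,m]\setminus J}s_i$, which is a nonempty subsum and hence lies in $\sum(T)$. Using that each $e(s_i)$ is idempotent and that sums of commuting idempotents are idempotent (so $\sum_{i\notin J}e(s_i)$ is idempotent and $e+\sum_{i\notin J}e(s_i)=e$), I would add $\sum_{i\notin J}e(s_i)$ to both sides of $b+\sum_{i\in J}e(s_i)=\sigma_J=e$ to get $b+e=e$, i.e.\ $b\in {\rm St}(e)$. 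This places $b\in {\rm St}(e)\cap\sum(T)$, contradicting the hypothesis. Hence $J=[1,m]$ is indeed the unique contributor.

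Finally, reading off the coefficient, $c_e=(-1)^m\prod_{i=1}^{m}a_i$, which is nonzero since $R$ is a domain and every $a_i\neq 0_R$; in particular $f\neq 0$. I do not expect a real obstacle here; the only mildly delicate point is the idempotent-absorption step $e+\sum_{i\notin J}e(s_i)=e$, which follows cleanly from Lemma \ref{lemma with idempotent} together with the commutativity of $S$.
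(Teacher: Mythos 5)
Your proof is correct and follows essentially the same route as the paper: the paper proves this proposition by invoking ``the same arguments as in Lemma \ref{lemma if equals zeor}'', which is precisely your expansion of the product, the identification of the subsets $J$ contributing to $c_e$, and the idempotent-absorption computation showing that any $J\subsetneq[1,m]$ with $\sigma_J=e$ would produce an element of ${\rm St}(e)\cap\sum(T)$. Your final step $c_e=(-1)^m\prod_{i=1}^m a_i\neq 0_R$ in a domain matches the intended argument, so there is nothing to add.
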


Proof. Using the same arguments as in Lemma \ref{lemma if equals zeor}.

In the section of concluding remarks, we shall give one clifford semigroup (Propositions \ref{Prop Clifford semigroup}) in which the condition ${\rm St}(e)\subset E(S)$ for every $e\in E(S)$ is satisfied. Then ${\rm St}(e)\cap\sum(T)=\emptyset$ if and only if $T$ is idempotent-sum free.

\section{Concluding remarks}

We first remark that if the semigroup $S$ meets some conditions,  a combinatorial explanation on the connection between $d(S,R)$ and the Erd\H{o}s-Burgess constant $I(S)$ can be obtained, which are given in the following Propositions \ref{proposition I(S)leq d(S,R)+1} and \ref{Prop Clifford semigroup}.

\begin{prop}\label{proposition I(S)leq d(S,R)+1} Let $S$ be a commutative semigroup, and let $R$ be a commutative unitary ring.
Suppose that ${\rm St}(e)\subset E(S)$ for every $e\in E(S)$. Then $I(S)\leq d(S,R)+1$.
\end{prop}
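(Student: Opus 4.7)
The plan is to prove the contrapositive: for any sequence $T = s_1 \cdot \ldots \cdot s_\ell \in \mathcal{F}(S)$ of length $\ell \geq d(S,R) + 1$, $T$ must contain a nonempty idempotent-sum subsequence. By Definition~\ref{Definition d(S,R)}, since $|T| > d(S,R)$, there exist $a_1, \ldots, a_\ell \in R \setminus \{0\}$ with
\[
f \;:=\; \prod_{i=1}^{\ell} \bigl(X^{s_i} - a_i X^{e(s_i)}\bigr) \;=\; 0 \;\in\; R[X;S].
\]
Expanding $f = \sum_{J \subseteq [1,\ell]} (-1)^{|J|} \bigl(\prod_{j \in J} a_j\bigr) X^{\sigma_J}$ with $\sigma_J := \sum_{i \notin J} s_i + \sum_{j \in J} e(s_j)$, I focus on the coefficient of $X^{e^*}$, where $e^* := \sum_{k=1}^{\ell} e(s_k)$ is idempotent (by Lemma~\ref{lemma with idempotent}) and equals $\sigma_{[1,\ell]}$.

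The term $J=[1,\ell]$ contributes $(-1)^\ell \prod_{i=1}^\ell a_i$ to the coefficient of $X^{e^*}$. Following the coefficient-vanishing argument of Lemma~\ref{lemma if equals zeor} and Proposition~\ref{prop using}, the requirement that this coefficient vanishes in $R$ forces the existence of some proper subset $J \subsetneq [1,\ell]$ with $\sigma_J = e^*$. For such a $J$, let $I := [1,\ell] \setminus J$ (nonempty) and $c_J := \sum_{j \in J} e(s_j)$, $c_I := \sum_{i \in I} e(s_i)$ (both idempotents, as sums of idempotents in a commutative semigroup). From $\sigma(T_I) + c_J = e^* = c_I + c_J$ and $c_I + c_I = c_I$, I compute
\[
\sigma(T_I) + e^* \;=\; (\sigma(T_I) + c_J) + c_I \;=\; e^* + c_I \;=\; e^*,
\]
so $\sigma(T_I) \in {\rm St}(e^*)$. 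The hypothesis ${\rm St}(e^*) \subset E(S)$ now gives $\sigma(T_I) \in E(S)$, and hence $T_I$ is a nonempty idempotent-sum subsequence of $T$, proving $I(S) \leq d(S,R) + 1$.

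The main obstacle is ensuring the existence of a proper subset $J$ with $\sigma_J = e^*$ in the generality of a commutative unitary ring $R$ rather than a domain. In a domain this is automatic since $\prod_i a_i \neq 0$ and the $J = [1,\ell]$ contribution alone cannot cancel. For non-domain $R$, the argument must be adapted---for example, by tracking the coefficient of a different monomial such as $X^{\sigma(T)}$, which by a parallel uniqueness analysis under the hypothesis ${\rm St}(e) \subset E(S)$ reduces to $1 \in R^{\times}$ whenever $T$ is idempotent-sum free (any competing $J$ would, after multiplying the corresponding identity by $\rho(\sigma(T_I))$ and applying the hypothesis, force $\sigma(T_J) \in E(S)$, contradicting idempotent-sum freeness). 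Either route rules out $f = 0$ in any commutative unitary ring, closing the proof.
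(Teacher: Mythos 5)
Your proposal is correct in substance, and its second route is genuinely different from (and in one respect more careful than) the paper's own argument. The paper proves the proposition by specializing to $a_i=1$: for a sequence $T$ of length $d(S,R)+1$ it asserts $\prod_i(X^{s_i}-X^{e(s_i)})=0$ and invokes Lemma \ref{lemma if equals zeor}, which is exactly your first route (the coefficient of $X^{e^*}$) in the case where the distinguished contribution is $\pm 1$ and hence nonzero in any unitary ring. Your first route with arbitrary nonzero $a_i$ is the Proposition \ref{prop using} version and, as you correctly observe, needs $R$ to be a domain so that $\prod_i a_i\neq 0$. Your second route---tracking the coefficient of $X^{\sigma(T)}$, whose distinguished contribution is $1$, and using the hypothesis ${\rm St}(e)\subseteq E(S)$ to exclude every competing subset---removes the domain assumption and in fact shows that an idempotent-sum free sequence $T$ satisfies $\prod_i(X^{s_i}-a_iX^{e(s_i)})\neq 0$ for all choices of the $a_i$, whence $I(S)-1\leq d(S,R)$ directly. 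This buys something real: the definition of $d(S,R)$ only guarantees, for a sequence of length $d(S,R)+1$, vanishing of the product for \emph{some} nonzero $a_i$, not for $a_i=1$, so an argument valid for arbitrary $a_i$ over an arbitrary commutative unitary ring (your second route) is precisely what the statement requires, while the paper's reduction to $a_i=1$ is not licensed by the definition as written.

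One repair is needed in the sketch of that second route. From $\sigma(T_I)+e(\sigma(T_J))=\sigma(T_I)+\sigma(T_J)$, literally adding the identity to itself $\rho(\sigma(T_I))$ times gives $e(\sigma(T_I))+e(\sigma(T_J))=e(\sigma(T_I))+\rho(\sigma(T_I))\sigma(T_J)$, which only places $\rho(\sigma(T_I))\,\sigma(T_J)$ in $E(S)$ and does not contradict idempotent-sum freeness. Instead add $(2\rho(\sigma(T_I))-1)\sigma(T_I)$ to both sides (the paper's own device for reaching $e(x)$): this yields $e(\sigma(T_I))+\sigma(T_J)=e(\sigma(T_I))+e(\sigma(T_J))=:e'$, so $\sigma(T_J)+e'=e'$, i.e.\ $\sigma(T_J)\in{\rm St}(e')\subseteq E(S)$, the contradiction you intended. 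Treat $J=[1,\ell]$ separately (then $\sigma(T)=e(\sigma(T))$ is itself idempotent), and note that your closing claim that ``either route rules out $f=0$'' should be restricted to the second route, since the first genuinely needs a domain; with these adjustments the proof is complete.
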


\begin{proof} Let $T=s_1\cdot\ldots\cdot s_{\ell}$ be a sequence over $S$ with $\ell=d(S,R)+1$. By the definition $d(S,R)$, we see that $\prod\limits_{i=1}^{\ell} (X^{s_i}-X^{e(s_i)})=0$.
 It follows from Lemma \ref{lemma if equals zeor} that ${\rm St}(e)\cap \sum(T)\neq \emptyset$ and so $\sum(S)\cap E(S)\neq \emptyset$, done.
\end{proof}

\begin{prop}\label{Prop Clifford semigroup} \ Let $S$ be a finite commutative Clifford semigroup, and let $R$ be a commutative unitary ring. Let $\mathbb{G}=(G,\gamma)$ be the group functor on the universal semilattice $Y(S)$ associated with the semigroup $S$. Suppose that the assigned homomorphism $\gamma_y^x$ is injective for every pair $(x,y)$ of elements in $Y(S)$ such that $y\preceq x$. Then the following conclusions hold:

(i) $I(S)\leq {\rm d}(S,R)+1$;

(ii) Suppose $R$ is a field with ${\rm Char}(R)=p$ and $\exp(S)$ is a $p$-power for some prime $p$. Then $I(S)={\rm d}(S,R)+1$.
\end{prop}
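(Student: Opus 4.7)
The plan is to reduce both conclusions to results already established in the paper by exploiting the Clifford decomposition $S = \bigcup_{y \in Y(S)} G_y$, in which each $G_y$ is a group with identity $y$ and, for $y \preceq x$ in $Y(S)$, $\gamma_y^x : G_x \to G_y$ is the structural connecting homomorphism; the multiplication on $S$ is $g \cdot h = \gamma_{x \wedge y}^x(g) + \gamma_{x \wedge y}^y(h)$ for $g \in G_x$, $h \in G_y$.

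For (i), I would verify the hypothesis of Proposition \ref{proposition I(S)leq d(S,R)+1}, namely ${\rm St}(e) \subseteq E(S)$ for every idempotent $e \in E(S)$. Fix $e = y \in Y(S)$ and take an arbitrary $c \in G_x$. Since $\gamma_{x \wedge y}^y$ sends $y$ (the identity of $G_y$) to the identity of $G_{x \wedge y}$, the multiplication formula above gives $c + e = \gamma_{x \wedge y}^x(c)$; requiring this to equal $e$ forces $x \wedge y = y$, i.e.\ $y \preceq x$, together with $c \in \ker(\gamma_y^x)$. The injectivity hypothesis collapses every such kernel to $\{x\}$, a single idempotent, so ${\rm St}(e) = \{x \in Y(S) : y \preceq x\} \subseteq E(S)$. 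Proposition \ref{proposition I(S)leq d(S,R)+1} then yields $I(S) \leq d(S,R) + 1$.

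For (ii), by (i) it suffices to exhibit an idempotent-sum free sequence in $S$ of length $d(S,R)$. Corollary \ref{Corollary Clifford} gives $d(S,R) = \max_H d(H,R)$ over the subgroups $H$ of $S$. Under the characteristic $p$ and $p$-power exponent hypothesis, every maximal subgroup $H$ is a finite abelian $p$-group, and combining Theorem C with Olson's Theorem B yields $d(H,R) = d^*(H) = D(H) - 1$. Pick a maximal subgroup $H_0 = G_{y_0}$ realising $d(S,R) = D(H_0) - 1$, and choose a zero-sum free sequence $T$ in $H_0$ of length $D(H_0) - 1 = d(S,R)$. Viewed in $S$, the sequence $T$ is idempotent-sum free: every nonempty subsum $\sigma(T')$ stays in the group $G_{y_0}$, whose only idempotent is its identity $y_0$, which $T$ avoids by zero-sum freeness in $H_0$. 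This gives $I(S) \geq d(S,R) + 1$, and combined with (i) proves equality. The main technical step throughout is the ${\rm St}(e)$ computation inside the Clifford structure, which converts the injectivity of the connecting homomorphisms $\gamma_y^x$ into the combinatorial condition demanded by Proposition \ref{proposition I(S)leq d(S,R)+1}.
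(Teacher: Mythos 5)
Your proposal is correct and follows essentially the same route as the paper: the computation showing ${\rm St}(e)\subseteq E(S)$ from the injectivity of the connecting homomorphisms is exactly the paper's Claim~C, and part (ii) likewise reduces to a maximal subgroup via Corollary \ref{Corollary Clifford} and Theorem C. You in fact spell out a detail the paper leaves implicit, namely that a zero-sum free sequence in the extremal maximal subgroup remains idempotent-sum free in $S$, which supplies the lower bound $I(S)\ge d(S,R)+1$.
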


\begin{proof} (i). We first show the following.

\noindent \textbf{Claim C.} For any $e\in E(S)$, we have ${\rm St}(e)\subset E(S)$.

\noindent {\sl Proof of Claim C}. Suppose that $g\in {\rm St}(e)$, i.e., $g+e=e.$ Let $x,y\in Y(S)$ be such that $g\in G_x$ and $e\in G_y$. We see that $y\preceq x$ in the semilattice. It follows that
$e=g+e=\gamma_{y}^{x}(g)+\gamma_{x}^{x}(e)=\gamma_{y}^{x}(g)+e$, and so $\gamma_{y}^{x}(g)=e$ is the identity element in the group $G_e$. Since $\gamma_{y}^{x}$ is a monomorphism, it follows that $g=e(g)\in E(S)$ is the identity element in the group $G_x$. This proves Claim C. \qed

Suppose to the contrary that $I(S)-1> {\rm d}(S,R)$. We take an idempotent-sum free sequence $T=g_1\cdot\ldots\cdot g_{\ell}$ of length $\ell=I(S)-1$ over $S$. Then $$(X^{g_1}-a_1X^{e_1})\cdot\ldots\cdot (X^{g_{\ell}}-a_{\ell} X^{e_{\ell}})=0\in R[S]$$ for some $a_1,\ldots,a_{\ell}\in R\setminus \{0\}$. By Proposition \ref{prop using}, we derive that ${\rm St}(e)\cap \sum(T)\neq \emptyset$. Combined with Claim C, we have that $E(S)\cap \sum(T)\neq \emptyset$, which contradicts with $T$ being idempotent-sum free.  This proves (i).

(ii) By Corollary \ref{Corollary Clifford}, we have $d(S,R)=d(H,R)$ for some subgroup $H$ of $S$. Since $H$ is a $p$-group, it follows from Theorem C and Theorem \ref{Corollary Clifford} that $I(H)={\rm D}(H)=d(H,R)+1$, done.
\end{proof}

 As given in Corollary \ref{Corollary Archimedean} and \ref{Corollary Clifford}, the upper bounds are attained for the classes of clifford semigroups and archimedean semigroups, which represents the two different kind of semigroups structures. Hence propose the following two conjectures.

\begin{conj}\label{Conjcture 2} Let $S$ be a finite commutative semigroup, and let $K$ be a field.
Suppose that either (i) $K$ is a splitting field of $S$, or (ii) ${\rm Char}(K)=p$ and $\exp(S)$ is a $p$-power. Then
$$d(S,K)=\max\limits_{a\in S}\left\{\epsilon_a \ {\rm D_{\bar{a}}}(S\diagup \mathcal{H}) +d(\Gamma(H_a),K)\right\},$$ where
$\begin{array}{llll}\epsilon_a=\left\{\begin{array}{llll}
               0,  & \mbox{if \ \ } {\langle a\rangle} \mbox{ is a group };\\
               1,  &  otherwise.\
              \end{array}
              \right.
\end{array}$
\end{conj}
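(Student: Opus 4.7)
The upper bound
$$d(S,K) \leq \max_{a \in S} \{\epsilon_a \, {\rm D}_{\bar a}(S\diagup\mathcal{H}) + d(\Gamma(H_a), K)\}$$
is already established in Theorem \ref{Theorem for finite}, so my plan focuses entirely on the reverse inequality. Fix $a \in S$ attaining the maximum. When $\epsilon_a = 0$, the conjectured value collapses to $d(\Gamma(H_a),K)$; since $H_a$ is then the maximal subgroup containing $a$ (Lemma \ref{Lemma Greens class}) and $\Gamma(H_a) \cong H_a$ (Lemma \ref{Lemma Schuzebure}), applying Lemma \ref{Lemma subsemigroup} to the subgroup $H_a \subseteq S$ gives $d(S,K) \geq d(H_a,K) = d(\Gamma(H_a),K)$ immediately. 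The genuine difficulty is the case $\epsilon_a = 1$.

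In that case, I would combine two ingredients. First, by definition of ${\rm D}_{\bar a}(S\diagup\mathcal{H})$, choose an additively irreducible sequence $\bar b_1 \cdot\ldots\cdot \bar b_t$ over $S\diagup\mathcal{H}$ summing to $\bar a$, with $t = {\rm D}_{\bar a}(S\diagup\mathcal{H})$, and lift to elements $b_1,\ldots,b_t\in S$ producing the strictly descending Green chain $b_1 \succ_{\mathcal{H}} (b_1+b_2)\succ_{\mathcal{H}}\cdots\succ_{\mathcal{H}}(b_1+\cdots+b_t)\,\mathcal{H}\, a$. Second, using the surjection $\pi_a : {\rm St}(H_a) \to \Gamma(H_a)$ (Lemma \ref{Lemma Schuzebure}), take an algebraically zero-sum free sequence $\gamma_1 \cdot\ldots\cdot \gamma_m$ over $\Gamma(H_a)$ of length $m = d(\Gamma(H_a),K)$ and lift each $\gamma_j$ to some $g_j \in {\rm St}(H_a)$. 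Form the candidate $T = b_1 \cdot\ldots\cdot b_t \cdot g_1 \cdot\ldots\cdot g_m$, of length $t+m$.

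The verification that $T$ is algebraically zero-sum free is the technical heart. For arbitrary $c_1,\ldots,c_{t+m} \in K\setminus\{0\}$, the plan is to split
$$\prod_{i=1}^t (X^{b_i} - c_i X^{e(b_i)})\cdot \prod_{j=1}^m (X^{g_j} - c_{t+j} X^{e(g_j)})$$
into a \emph{principal} part supported on $H_a$ and a \emph{residual} part supported in Green classes strictly above $H_a$. For the principal part, one keeps every $X^{b_i}$ intact and invokes the representation-theoretic lift $\varphi = \psi\widetilde\pi_a$ built in the proof of Theorem \ref{Theorem General bound for finite commutative semigroup}, which transports $\prod_j (X^{g_j} - c_{t+j} X^{e(g_j)})$ applied to $X^{b_1+\cdots+b_t}$ to $\prod_j (X^{\gamma_{g_j}} - c_{t+j} X^{0_\Gamma})$ applied to the same vector, and this is nonzero in $K[X;H_a]$ by choice of the $\gamma_j$. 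For the residual part, Lemma \ref{Lemma elments of V} combined with the strictness of the chain shows that every monomial obtained by replacing at least one $b_i$ with $e(b_i)$ lives in some $H$-class strictly above $H_a$, so it cannot cancel the principal part.

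The main obstacle is making this clean separation work \emph{in full generality}. In the archimedean case one has a unique idempotent (Lemma \ref{Lemma f.c.s is archi iff one idemptoent}) and the residual collapses; in the elementary case the nilsemigroup zero $\infty$ absorbs everything. But in a generic commutative semigroup, a residual element obtained by swapping some $b_i \leadsto e(b_i)$ may, after interaction with the $g_j$'s (which act only on $H_a$, not on higher classes, so we must be careful that they act at all), return to $H_a$ or move in unpredictable ways, destroying the neat grading by Green's classes. A successful proof will likely require either a reduction through the semilattice-of-archimedeans decomposition and Birkhoff's subdirect-product theorem to the three cases already handled by Corollaries \ref{Corollary Clifford}, \ref{Corollary Archimedean} and \ref{Corollary elementarysemigroup}, together with a transfer principle for $d(\cdot,K)$ along subdirect embeddings, or a direct module-theoretic argument over $K[X;{\rm St}(H_a)]$ exploiting the surjectivity of $\widetilde\pi_a$ more delicately than in the upper-bound proof.
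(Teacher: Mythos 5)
The statement you are proving is Conjecture~\ref{Conjcture 2}: the paper does \emph{not} prove it. What the paper establishes is the upper bound (Theorem~\ref{Theorem for finite}), the weaker lower bound $d(S,K)\geq\max_a d(\Gamma(H_a),K)$, and the equality only in the special cases of Clifford, Archimedean and elementary semigroups (Corollaries~\ref{Corollary Clifford}, \ref{Corollary Archimedean}, \ref{Corollary elementarysemigroup}). So your proposal cannot be a complete proof unless it closes the gap that you yourself flag in the last paragraph --- and that gap is real, not a technicality.

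Concretely, the separation into a ``principal'' part supported on $H_a$ and a ``residual'' part that cannot interfere with it does not go through. First, the direction is reversed: by Lemma~\ref{Lemma elments of V}, every monomial obtained by replacing some $b_i$ with $e(b_i)$ has exponent $\sum_{i\in I}b_i+\sum_{j\notin I}e(b_j)\preceq_{\mathcal H}s$ where $s=\sum_i b_i\,\mathcal H\,a$, i.e.\ it lies in $V=\{x: x\preceq_{\mathcal H}s\}$, \emph{at or below} $H_a$, not strictly above it. The danger is precisely that such a monomial can land \emph{in} $H_a$: Lemma~\ref{Lemma all in He} shows that when $\langle s\rangle$ is a group \emph{all} of these monomials fall into $H_{e}$ with $e=\sum e(s_i)$, and this collapse is exactly the mechanism the paper exploits to prove the \emph{upper} bound in the $\epsilon_s=0$ case. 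In the $\epsilon_s=1$ case, additive irreducibility of $\bar b_1\cdot\ldots\cdot\bar b_t$ only forbids a \emph{proper subsequence} of the $b_i$ from having sum $\mathcal H$-equivalent to $s$; the residual exponents involve extra copies $(2\rho(b_j)-1)b_j$, which irreducibility does not control, so nothing prevents residual terms from re-entering $H_a$ and cancelling the principal part for suitably chosen $c_i$. Your fallback routes also do not close this: the semilattice-of-Archimedeans and Birkhoff decompositions would require a transfer principle for $d(\cdot,K)$ along subdirect embeddings, whereas the only monotonicity available in the paper is Lemma~\ref{Lemma subsemigroup} for subsemigroups, and Example~\ref{exam 1} warns that naive gluing of chains across components already breaks the $\psi$-version of the formula. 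The case $\epsilon_a=0$ of your argument is fine (it is Corollary~\ref{Corollary Clifford} in miniature), but the case $\epsilon_a=1$ remains open, as it does in the paper.
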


\begin{remark} We remark that although the bound $d(S,K)\leq \max\limits_{a\in S}\left\{\epsilon_a \ (\psi(a)+1)+d(\Gamma(H_a),K)\right\}$ is attained in Clifford semigroups, Archimedean semigroup and elementary semigroups (which cover all types of irreducible components in both semilattice decomposition and subdirectly product decomposition of a finite commutative semmigroups), there exists some finite commutative semigroups $S$ for which
$$d(S,K)<\max\limits_{a\in S}\left\{\epsilon_a(\psi(a)+1)+d(\Gamma(H_a),K)\right\}<\infty$$ (see Example \ref{exam 1}).  That is the reason why in Conjecture \ref{Conjcture 2},
the conjectured equality $d(S,K)=\max\limits_{a\in S}\left\{\epsilon_a \ {\rm D_{\bar{a}}}(S\diagup \mathcal{H}) +d(\Gamma(H_a),K)\right\}$ can not be replaced by $d(S,K)=\max\limits_{a\in S}\left\{\epsilon_a(\psi(a)+1)+d(\Gamma(H_a),K)\right\}$.
\end{remark}

\begin{exam} \label{exam 1} Take $t$ ($t\geq 2$) finite cyclic nilsemigroups generated by $x_1,\ldots,x_t$ respectively with $|\langle x_i\rangle|=n>1$ for each $i\in [1,t]$. Let  $K$ be an algebraically closed field with ${\rm Char}(K)=0$.
Let $S=\bigcup \limits_{i=1}^t \langle x_i \rangle$ be the disjoint union of the $m$ cyclic nilsemigroups.
We define the operation on $S$, denoted $+_{S}$, given by
$$\begin{array}{llll}a+_{S}b=\left\{\begin{array}{llll}
               a+_{\langle x_i \rangle} b,  & \mbox{if \ \ }  i=j;\\
               b,  &  \mbox{if \ \ }  i<j;\\
               a,  &  \mbox{if \ \ }  i>j,
              \end{array}
              \right.
\end{array}$$
for any $a,b\in S$ such that $a\in  \langle x_i \rangle$ and $b\in  \langle x_j\rangle$, where $+_{\langle x_i\rangle}$ denotes the addition in the cyclic semigroup $\langle x_i\rangle$.
It is easy to check that $S$ is a finite commutative semigroup, with $H_s=\{s\}$ and so $d(\Gamma(H_s),K)=0$, for each $s\in S$. Then we have that  $$x_1\succ_{\mathcal{H}} 2x_1\succ_{\mathcal{H}}\cdots\succ_{\mathcal{H}}n x_1 \succ_{\mathcal{H}} x_2\succ_{\mathcal{H}}2x_2 \succ_{\mathcal{H}}\cdots  \succ_{\mathcal{H}} nx_2 \ \ \ \succ_{\mathcal{H}} \ \ \ \cdots \succ_{\mathcal{H}} x_t\succ_{\mathcal{H}} 2x_t\succ_{\mathcal{H}}\cdots\succ_{\mathcal{H}}n x_t$$
Since $nx_t$ is an idempotent, then $\max\limits_{a\in S}\left\{\epsilon_a(\psi(a)+1)+d(\Gamma(H_a),K)\right\}=\max\limits_{a\in S\setminus E(S)}\left\{\epsilon_a(\psi(a)+1)\right\}=\psi((n-1)x_t)+1=tn-1$. On the other hand, we see that ${\rm D}_{\bar{a}}(S\diagup \mathcal{H})\leq n$. Therefore, we have that $\max\limits_{a\in S}\left\{\epsilon_a \ {\rm D_{\bar{a}}}(S\diagup \mathcal{H}) +d(\Gamma(H_a),K)\right\}\leq n<tn-1=\max\limits_{a\in S}\left\{\epsilon_a(\psi(a)+1)+d(\Gamma(H_a),K)\right\}$.
\end{exam}

\bigskip

\noindent {\bf Acknowledgements}

\noindent
This work is supported by NSFC (grant no. 12371335, 12271520).

\end{document}